\newtheorem{theorem}{Theorem}[section]
\newtheorem{lemma}[theorem]{Lemma}
\newtheorem{proposition}[theorem]{Proposition}
\newtheorem{conjecture}[theorem]{Conjecture}
\numberwithin{equation}{section}
\numberwithin{figure}{section}
\title{Periodicity of certain piecewise affine planar maps}
\author[S.~Akiyama]{Shigeki~Akiyama}
\author[H.~Brunotte]{Horst~Brunotte}
\author[A.~Peth\H{o}]{Attila~Peth\H{o}}
\author[W.~Steiner]{Wolfgang~Steiner}
\date{\today}
\begin{document}

\begin{abstract}
We determine periodic and aperiodic points of certain piecewise affine maps in
the Euclidean plane. 
Using these maps, we prove for 
$\lambda\in\{\frac{\pm1\pm\sqrt5}2,\pm\sqrt2,\pm\sqrt3\}$ that all integer 
sequences $(a_k)_{k\in\mathbb Z}$ satisfying 
$0\le a_{k-1}+\lambda a_k +a_{k+1}<1$ are periodic.
\end{abstract}

\maketitle

\section{introduction}

In the past few decades, discontinuous piecewise affine maps have found 
considerable interest in the theory of dynamical systems. 
For an overview, we refer the reader to \cite{AKT,A,Goe00,Goe01,K,KLV}, for 
particular instances to \cite{VS,GS,T} (polygonal dual billiards), \cite{GH} 
(polygonal exchange transformations), \cite{CL88,WC,D,ACP} (digital filters) 
and \cite{LHV,LV98,LV} (propagation of round-off errors in linear systems). 
The present paper deals with a conjecture on the periodicity of a certain kind 
of these maps:

\begin{conjecture}\cite{ABPT,V}\label{cj}
For every real $\lambda$ with $|\lambda|<2$, all integer sequences
$(a_k)_{k\in\mathbb Z}$ satisfying
\begin{equation}\label{e1app}
0\le a_{k-1}+\lambda a_k +a_{k+1}<1
\end{equation}
for all $k\in\mathbb Z$ are periodic.
\end{conjecture}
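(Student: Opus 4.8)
The plan is to translate the arithmetic condition \eqref{e1app} into the orbit structure of a single bijection of the lattice $\mathbb{Z}^2$ and then to prove that every orbit is bounded; boundedness forces periodicity because a bounded subset of $\mathbb{Z}^2$ is finite. First I would note that, for fixed $(a_{k-1},a_k)\in\mathbb{Z}^2$, the inequality \eqref{e1app} pins down $a_{k+1}$ uniquely: the only integer in the half-open interval $[-a_{k-1}-\lambda a_k,\,1-a_{k-1}-\lambda a_k)$ is $-\lfloor a_{k-1}+\lambda a_k\rfloor$. Hence each admissible sequence is an orbit of
\[
  T\colon \mathbb{Z}^2\to\mathbb{Z}^2,\qquad T(x,y)=\bigl(y,\,-\lfloor x+\lambda y\rfloor\bigr),
\]
and $T$ is a bijection, its inverse reflecting the symmetry $k\mapsto-k$ of \eqref{e1app}. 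Because $\lfloor x+\lambda y\rfloor=x+\lfloor\lambda y\rfloor$ for $x\in\mathbb{Z}$, one may write $T(x,y)=Mv+(0,\{\lambda y\})$ with $v=(x,y)^{\!\top}$ and companion matrix $M=\left(\begin{smallmatrix}0&1\\-1&-\lambda\end{smallmatrix}\right)$, so that $T$ is the linear map $M$ followed by a perturbation of norm $<1$. Since $T$ is bijective, a bounded orbit is finite and therefore periodic, and the whole statement reduces to showing that every $T$-orbit stays bounded.

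Second, I would exploit $|\lambda|<2$ to write $\lambda=-2\cos 2\pi\alpha$ for a unique rotation number $\alpha\in(0,\tfrac12)$. Then $\det M=1$ and $M$ has eigenvalues $e^{\pm 2\pi i\alpha}$, so a fixed linear isomorphism $L$ carries the $M$-invariant ellipses to concentric circles and conjugates $T$ to $F:=LTL^{-1}$ with $F(w)=R_\alpha w+e(w)$, where $R_\alpha$ is the rigid rotation by $2\pi\alpha$ and $e$ is bounded. In the unperturbed system $w\mapsto R_\alpha w$ every orbit lies on a circle, so boundedness of the $F$-orbit of $w$ is equivalent to boundedness of the radial drift $\sum_{j=0}^{n-1}e(F^j w)$; the entire difficulty is to control this Birkhoff-type sum of the step function $e$ along the perturbed rotation.

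Third, I would split into two regimes according to $\alpha$. When $\alpha=p/q$ is rational the rotation is periodic, $R_\alpha^q=\mathrm{id}$, and $F^q$ is a bounded perturbation of the identity; for the algebraic values where $L$, $q$ and the lattice interact coherently one can build a self-inducing renormalization — the first-return map of $F$ to a suitably scaled copy of a fundamental cell is again conjugate to $F$ — which partitions the plane into a hierarchy of cells on which the dynamics is an exact permutation, giving periodicity. This is the mechanism behind the special values listed in the abstract, and it applies whenever the rotation number is rational and $\lambda$ is algebraic of compatible degree. When $\alpha$ is irrational, $R_\alpha$ has dense, equidistributed orbits, the radial drift becomes a Birkhoff sum of the fractional-part cocycle over an irrational rotation, and boundedness amounts to the cohomological statement that the mean-zero part of this cocycle is a coboundary with bounded transfer function.

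Finally, the principal obstacle is exactly the irrational-rotation-number case. The observable $e$ is only piecewise constant — it is assembled from $\{\lambda y\}$ and so has jump discontinuities along a lattice of lines — whence the smooth KAM theory and the Denjoy--Koksma estimates that would ordinarily force bounded remainders are unavailable, and a naive bound only yields linear growth. The viable route is to replace smooth renormalization by an arithmetic one driven by the continued-fraction (Ostrowski) expansion of $\alpha$, using the lattice $\mathbb{Z}^2$ to realize the successive convergents as an explicit self-similarity of the partition induced by the discontinuities of $e$, and thereby to show that the drift telescopes rather than accumulates. Carrying out this renormalization uniformly in $\lambda$, with the requisite control over all continued-fraction scales, is the crux of the proof and its main difficulty.
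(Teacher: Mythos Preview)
The statement you are attempting is a \emph{conjecture}, and the paper makes no claim to prove it in full generality; the introduction explicitly sorts the problem into five cases of increasing difficulty and says that for irrational rotation number (your ``irrational $\alpha$'' regime) the problem ``currently seems hopeless.'' The paper's contribution is to settle the conjecture only for the eight quadratic values $\lambda=\tfrac{\pm1\pm\sqrt5}{2},\pm\sqrt2,\pm\sqrt3$, all of which have rational rotation number and quadratic $\lambda$.

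Your proposal therefore has a genuine gap exactly where you locate the ``principal obstacle'': the Ostrowski/continued-fraction renormalization you sketch for irrational $\alpha$ is a hope, not a proof, and no one has carried it out. The observable $e$ is a step function on the lattice, and showing that its Birkhoff sums along the \emph{perturbed} (not the true) rotation remain bounded is tantamount to the conjecture itself; the reformulation buys nothing. Even in the rational-$\alpha$ case your claim that the self-inducing mechanism ``applies whenever the rotation number is rational and $\lambda$ is algebraic of compatible degree'' overreaches: the paper notes that already for cubic $\lambda$ (its case (3)) it is not known how to locate the requisite self-inducing structure, and the conjecture remains open there too.

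For comparison, the paper's argument for the eight quadratic values differs substantially from your outline. Rather than working on $\mathbb{Z}^2$ and tracking radial drift, it passes via $z=(\{\lambda a_{k-1}\},\{\lambda a_k\})$ to a piecewise-affine map on $[0,1)^2$ built from the algebraic conjugate $\lambda'$, exhibits \emph{case by case} an explicit self-inducing domain with contraction by a unit $\kappa\in\mathbb{Z}[\lambda]$ whose conjugate has modulus $>1$, and then invokes a number-theoretic finiteness argument: if $z\in(\mathbb{Z}[\lambda]\cap[0,1))^2$ were aperiodic, the conjugates of the renormalized iterates $S^n(z)$ would be uniformly bounded, forcing them into a finite set that is checked directly to consist of periodic points. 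The Pisot-type property of $\kappa$ is the crucial arithmetic input, and it has no counterpart in your drift-cancellation picture.
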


This conjecture originated on the one hand from a discretization process in a 
rounding-off scheme occurring in computer simulation of dynamical systems (we 
refer the reader to \cite{LHV,V} and the literature quoted there), and on the 
other hand in the study of shift radix systems (see \cite{ABPT,ABBPT} for 
details). 
Extensive numerical evidence on the periodicity of integer sequences satisfying
(\ref{e1app}) was first observed in \cite{V94}.

We summarize the situation of the Conjecture \ref{cj}. 
Since we have approximately 
$$
\begin{pmatrix}a_k \\ a_{k+1}\end{pmatrix} \approx
\begin{pmatrix}0 & 1 \\ -1 & -\lambda\end{pmatrix} 
\begin{pmatrix}a_{k-1} \\ a_k\end{pmatrix}
$$
and the eigenvalues of the matrix are $\exp(\pm\theta\pi i)$ with 
$\theta\in [0,1]$, the sequence may be viewed as a discretized rotation on 
$\mathbb{Z}^2$, and it is natural to parametrize $-\lambda=2\cos(\theta\pi)$.
There are five different classes of $\lambda$ of apparently increasing 
difficulty:
\begin{enumerate}
\item $\theta$ is rational and $\lambda$ is rational.
\item $\theta$ is rational and $\lambda$ is quadratic.
\item $\theta$ is rational and $\lambda$ is cubic or of higher degree.
\item $\theta$ is irrational and $\lambda$ is rational.
\item None of the above.
\end{enumerate}

The first case consists of the three values $\lambda=-1,0,1$, where the
conjecture is trivially true.
Already in case (2) the problem is far from trivial.
A computer assisted proof for $-\lambda=\frac{\sqrt5-1}2$  was given by
Lowenstein, Hatjispyros and Vivaldi~\cite{LHV}.\footnote{Indeed, they showed 
that all trajectories of the map 
$(x,y)\mapsto(\lfloor(-\lambda)x\rfloor-y,x)$ on $\mathbb Z^2$ are periodic.}
A short proof (without use of computers) of the golden mean case 
$\lambda=\frac{1+\sqrt5}2$ was given by the authors~\cite{ABPS}.
The main goal of this paper is to settle the conjecture for all the cases of 
(2), i.e., the quadratic parameters
$$
\lambda=\frac{\pm1\pm\sqrt5}2,\,\pm\sqrt2,\,\pm\sqrt3.
$$
The proofs are sensitive to the choice of $\lambda$, and we have to work 
tirelessly in computation and drawings, especially in the last case 
$\pm\sqrt3$. 
However, an important feature of our proof is that it can basically be checked 
by hand. 
The (easiest) case $\frac{1+\sqrt{5}}{2}$ in Section~\ref{sectgolden} gives a 
prototype of our discussion and should help the reader to understand the idea 
for the remaining values.

For case (3), it is possible that Conjecture~\ref{cj} can be proved using the
same method, which involves a map on $[0,1)^{2d-2}$, where $d$ denotes the 
degree of $\lambda$.
However, it seems to be difficult in case $d\ge3$
to find self inducing structures, which are essential for this method.
In~\cite{LV}, a similar embedding into a higher dimensional torus is used for 
efficient orbit computations. 
Goetz \cite{Goe00,Goe01,Goe02} found a piecewise $\pi/7$ rotation on an 
isosceles triangle in a cubic case having a self inducing structure, but we do 
not see a direct connection to our problem.

The problem currently seems hopeless for cases (4) and (5).
However, a nice observation on rational values of $\lambda$ with prime-power 
denominator $p^n$ is exhibited in~\cite{BV}. 
The authors show that the dynamical system given by (\ref{e1app}) can be 
embedded into a $p$-adic rotation dynamics, by multiplying a $p$-adic unit. 
These investigations were extended in \cite{VV}. 
Furthermore, in \cite{V} the case $\lambda=q/p$ with $p$ prime was related to 
the concept of minimal modules, the lattices of minimal complexity which 
support periodic orbits.

Now we come back to the content of the present paper. 
The proof in~\cite{LHV} is based on a discontinuous non-ergodic piecewise 
affine map on the unit square, which dates back to Adler, Kitchens and 
Tresser~\cite{AKT}.
Let $\lambda^2=b\lambda+c$ with $b,c\in\mathbb Z$. 
Set $x=\{\lambda a_{k-1}\}$ and $y=\{\lambda a_k\}$, where 
$\{z\}=z-\lfloor z\rfloor$ denotes the fractional part of $z$. 
Then we have $a_{k+1}=-a_{k-1}-\lambda a_k+y$ and
$$
\{\lambda a_{k+1}\} = \{-\lambda a_{k-1}-\lambda^2 a_k+\lambda y\}=
\{-x+(\lambda-b) y\} = \{-x+cy/\lambda\} = \{-x-\lambda'y\},
$$
where $\lambda'$ is the algebraic conjugate of $\lambda$. 
Therefore we are interested in the map $T:[0,1)^2\to[0,1)^2$ given by 
$T(x,y)=(y,\{-x-\lambda'y\})$.
Obviously, it suffices to study the periodicity of $(T^k(z))_{k\in\mathbb Z}$ 
for points $z=(x,y)\in(\mathbb Z[\lambda]\cap[0,1))^2$ in order to prove the
conjecture.
Using this map, Kouptsov, Lowenstein and Vivaldi~\cite{KLV} showed for all 
quadratic $\lambda$ corresponding to rational rotations 
$\lambda=\frac{\pm1\pm\sqrt5}2,\pm\sqrt2,\pm\sqrt3$ that the trajectories of
almost all points are periodic, by heavy use of computers.
Of course, such metric results do not settle Conjecture~\ref{cj}, which
deals with countably many points in $[0,1)^2$, which may be exceptional.
The main goal of this article is to show that no point with aperiodic
trajectory has coordinates in $\mathbb Z[\lambda]$, which proves 
Conjecture~\ref{cj} for these eight values of $\lambda$.

This number theoretical problem is solved by introducing a map $S$, which is 
the composition of the first hitting map to the image of a suitably chosen self 
inducing domain under a (contracting) scaling map and the inverse of the 
scaling map.
A crucial fact is that the inverse of the scaling constant is a Pisot unit in 
the quadratic number field $\mathbb Q(\lambda)$.
This number theoretical argument greatly reduces the classification problem of 
periodic orbits, see e.g. Theorem~\ref{thmper}.
All possible period lengths can be determined explicitly and one can even 
construct concrete aperiodic points in $(\mathbb Q(\lambda)\cap[0,1))^2$.
We can associate to each aperiodic orbit a kind of $\beta$-expansion with 
respect to the scaling constant.
Note that the set of aperiodic points can be constructed similarly to 
a Cantor set, and that it is an open question of Mahler~\cite{M} 
whether there exist algebraic points in the triadic Cantor set.

The paper is organized as follows. 
In Section 2, we reprove the conjecture for the simplest non-trivial case, 
i.e., where $\lambda$ equals the golden mean. 
An exposition of our domain exchange method is given in Section~3, where the 
ideas of Section~2 are extended to a general setting.
In the subsequent seven sections we prove the conjecture for the cases 
$\lambda=-\gamma,\pm1/\gamma,\pm\sqrt2,\pm\sqrt3$.
Some parts of the proofs for $\lambda=\pm\sqrt3$ are put into the Appendix. 
We conclude this paper by an observation relating the famous Thue-Morse 
sequence to the trajectory of points for $\lambda=\pm\gamma,\pm1/\gamma,\sqrt3$.

\section{The case $\lambda=\gamma=\frac{1+\sqrt5}2=-2\cos\frac{4\pi}5$}
\label{sectgolden}

We consider first the golden mean $\lambda=\gamma=\frac{1+\sqrt5}2$,
$\lambda^2=\lambda+1$.
Note that $T$ is given by
\begin{equation}\label{eqTA}
T(x,y)=(x,y)A+(0,\lceil x-y/\gamma\rceil)\ \mbox{ with }\
A=\begin{pmatrix}0 & -1 \\ 1 & 1/\gamma\end{pmatrix}.
\end{equation}
Therefore, we have $T(x,y)=(x,y)A$ if $y\ge\gamma x$ and $T(z)=zA+(0,1)$
for the other points $z\in[0,1)^2$, see Figure~\ref{figT1}.
A particular role is played by the set
$$
\mathcal R=\{(x,y)\in[0,1)^2:\,y<\gamma x,\,x+y>1,\,x<y\gamma\}\cup\{(0,0)\}.
$$
If $z\in\mathcal R$, $z\ne(0,0)$, then we have $T^{k+1}(z)=T^k(z)A+(0,1)$ for
all $k\in\{0,1,2,3,4\}$, hence
$$
T^5(z) = zA^5+(0,1)(A^4+A^3+A^2+A^1+A^0) = z+(0,1)(A^5-A^0)(A-A^0)^{-1} = z
$$
since $A^5=A^0$.
It can be easily verified that the minimal period length is 5 for all
$z\in\mathcal R$ except
$(\frac{\gamma^2}{\gamma^2+1},\frac{\gamma^2}{\gamma^2+1})$ and $(0,0)$, which
are fixed points of $T$.
Therefore, it is sufficient to consider the domain 
$\mathcal D=[0,1)^2\setminus\mathcal R$ in the following.
According to the action of $T$, we partition $\mathcal D$ into two sets $D_0$
and $D_1$, with $D_0=\{(x,y)\in[0,1)^2:y\ge\gamma x\}\setminus\{(0,0)\}$, 

\begin{figure}
\includegraphics{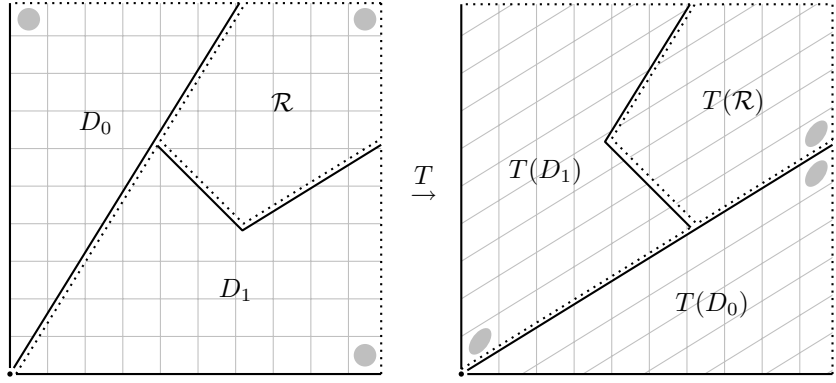}
\caption{The piecewise affine map $T$ and the set $\mathcal R$,
$\lambda=\gamma=\frac{1+\sqrt5}2$.} \label{figT1}
\end{figure}

\begin{figure}
\includegraphics{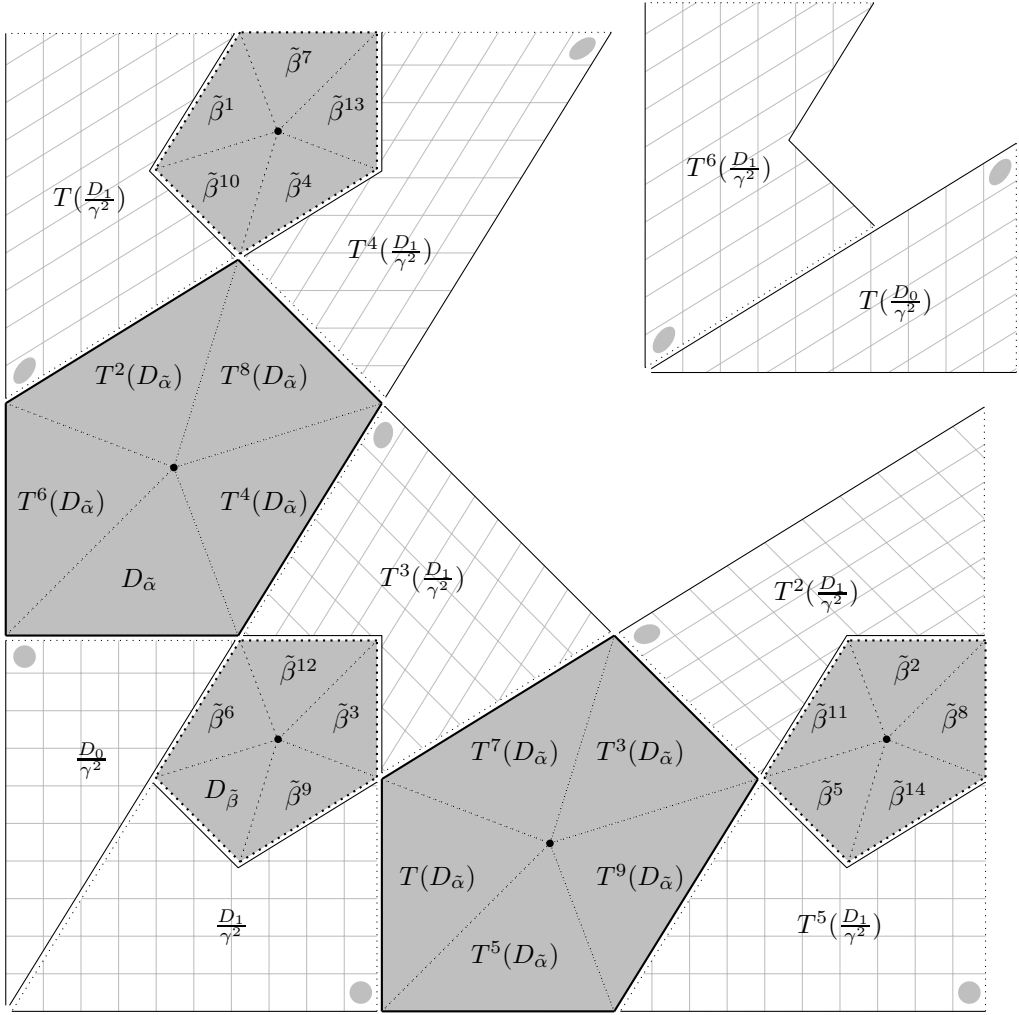}
\caption{The trajectory of the scaled domains and the (gray) set $\mathcal P$,
$\lambda=\gamma$. ($\tilde\beta^k$ stands for $T^k(D_{\tilde\beta})$.)}
\label{figP1}
\end{figure}

In Figure~\ref{figP1}, we scale $D_0$ and $D_1$ by the factor $1/\gamma^2$ and 
follow their $T$-trajectory until the return to $\mathcal D/\gamma^2$.
Let $\mathcal P$ be the set of points in $\mathcal D$ which are not
eventually mapped to $\mathcal D/\gamma^2$, i.e.,
$$
\mathcal P=D_\alpha\cup T(D_\alpha)\cup D_\beta\cup T(D_\beta)\cup T^2(D_\beta),
$$
where $D_\alpha$ is the closed pentagon
$\{(x,y)\in D_0:y\ge1/\gamma^2,x+y\le1,y\le(1+x)/\gamma\}$ and $D_\beta$ is the
open pentagon $\mathcal R/\gamma^2\setminus\{(0,0)\}$.
(In Figure~\ref{figP1}, $D_\alpha$ is split up into
$\{T^k(D_{\tilde\alpha}):k\in\{0,2,4,6,8\}\}$, and $D_\beta$ is split up
into $\{T^k(D_{\tilde\beta}):k\in\{0,3,6,9,12\}\}$.)
All points in $\mathcal P$ are periodic (with minimal period lengths $2,3,10$
or $15$).
Figures~\ref{figT1} and~\ref{figP1} show that the action of the first return
map on $\mathcal D/\gamma^2$ is similar to the action of $T$ on $\mathcal D$,
more precisely,
\begin{equation}\label{eqselfsimilar}
\frac{T(z)}{\gamma^2}=\left\{\begin{array}{cl}T(z/\gamma^2)&\mbox{if }z\in D_0,
\vspace{1mm} \\ T^6(z/\gamma^2)&\mbox{if }z\in D_1.\end{array}\right.
\end{equation}
For $z\in\mathcal D\setminus\mathcal P$, let
$s(z)=\min\{m\ge0:T^m(z)\in\mathcal D/\gamma^2\}$.
(Figure~\ref{figP1} shows $s(z)\le 5$.)
By the map
$$
S:\ \mathcal D\setminus\mathcal P\to\mathcal D,\quad
z\mapsto\gamma^2T^{s(z)}(z),
$$
we can completely characterize the periodic points.
For $z\in [0,1)^2$, denote by $\pi(z)$ the minimal period length if
$(T^k(z))_{k\in\mathbb Z}$ is periodic and set $\pi(z)=\infty$ else.

\begin{theorem}\label{thmper}
$(T^k(z))_{k\in\mathbb Z}$ is periodic if and only if $z\in\mathcal R$ or
$S^n(z)\in\mathcal P$ for some $n\ge0$.
\end{theorem}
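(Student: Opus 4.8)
The plan is to treat $S$ as a renormalization (first-return-and-rescale) map and to prove the two implications separately, the engine in both cases being the self-similarity relation \eqref{eqselfsimilar} together with the fact that $T$ is a bijection of $[0,1)^2$ (so that forward periodicity coincides with two-sided periodicity and preimages of periodic points are periodic). First I would record the elementary \emph{renormalization lemma}: for $z\in\mathcal D\setminus\mathcal P$ the point $z$ is $T$-periodic if and only if $S(z)$ is $T$-periodic. Writing $w=T^{s(z)}(z)=S(z)/\gamma^2$, iterating \eqref{eqselfsimilar} shows that the $T$-orbit of $w$ meets $\mathcal D/\gamma^2$ exactly in the rescaled orbit of $S(z)$, the number of intermediate steps between two consecutive visits being $1$ when the corresponding point of the $S(z)$-orbit lies in $D_0$ and $6$ when it lies in $D_1$. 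Hence a period of $S(z)$ closes up to a period of $w$, and conversely; since $w$ lies on the orbit of $z$ and $T$ is injective, $z$ and $w$ have the same period. Iterating, $z$ is periodic if and only if $S^n(z)$ is periodic for every $n$ for which it is defined.

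The \emph{if} direction is then immediate: points of $\mathcal R$ are periodic by the computation preceding the theorem, points of $\mathcal P$ are periodic by construction, and the renormalization lemma propagates periodicity of $S^n(z)\in\mathcal P$ back to $z$.

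For the \emph{only if} direction I would argue by a strict descent on the period length. The orbit-counting in the renormalization lemma gives, for periodic $z\in\mathcal D\setminus\mathcal P$, the exact relation
\begin{equation*}
\pi(z)=\pi(w)=\sum_{j=0}^{\pi(S(z))-1} r_j,\qquad r_j=\begin{cases}1 & \text{if } T^j(S(z))\in D_0,\\ 6 & \text{if } T^j(S(z))\in D_1,\end{cases}
\end{equation*}
so that $\pi(z)\ge\pi(S(z))$, with equality exactly when the whole $T$-orbit of $S(z)$ is contained in $D_0$. On $D_0$ the map $T$ acts linearly as $z\mapsto zA$, so such an orbit would be a full linear orbit $\{S(z)A^j:0\le j\le4\}$ (of length $5$, since $A$ has order $5$ and fixes only the origin). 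But $A^5=A^0$ and $A\ne A^0$ force $\sum_{j=0}^{4}A^j=0$, hence the five points of such an orbit would sum to $0$; being contained in $D_0\subset[0,1)^2$ they have nonnegative coordinates, whence they all vanish — contradicting $0\notin D_0$. Thus equality never occurs and $\pi(S(z))<\pi(z)$ strictly whenever $z\in\mathcal D\setminus\mathcal P$ is periodic. Consequently, if $z\notin\mathcal R$ is periodic, the strictly decreasing sequence of positive integers $\pi(S^n(z))$ cannot continue indefinitely; the $S$-orbit must terminate, i.e.\ $S^n(z)\in\mathcal P$ for some $n\ge0$.

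The main obstacle is the bookkeeping underlying the renormalization lemma: one must verify from the explicit geometry of Figures~\ref{figT1} and~\ref{figP1} that, starting from a point of $\mathcal D/\gamma^2$, the $T$-trajectory returns to $\mathcal D/\gamma^2$ after exactly the asserted $1$ or $6$ steps, does not meanwhile re-enter $\mathcal D/\gamma^2$, and does not leave $\mathcal D$ into $\mathcal R$; equivalently, that $\mathcal D$ is $T$-invariant and that \eqref{eqselfsimilar} exhausts the first-return structure. Once this is in place, the two transparent ingredients — the rescaling by the unit $\gamma^2$ hidden in $S$ and the vanishing sum $\sum_{j=0}^4 A^j=0$ coming from the rational rotation $A$ — drive the argument, and the descent yields the equivalence with no appeal to the arithmetic of $\mathbb Z[\gamma]$.
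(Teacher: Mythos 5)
Your argument is correct, and its decisive step takes a genuinely different route from the paper's. The paper derives Theorem~\ref{thmper} from the general machinery of Section~\ref{sectgeneral}: Lemma~\ref{lemsubstitution} propagates the coding substitution $\sigma$ ($0\mapsto0$, $1\mapsto101101$) through all scales $\gamma^{2n}$, and Theorem~\ref{thmperiodic} shows that if $S^n(z)\notin\mathcal P$ for all $n$ then $\pi(z)\ge|\sigma^n(\ell_n)|\to\infty$ --- an unbounded-growth argument applied directly to possibly aperiodic orbits, which needs the figure-verified hypothesis that every orbit in $\mathcal D$ meets $D_1$ (``$z\in D_1$ or $T(z)\in D_1$ for all $z\in\mathcal D$''). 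You instead argue contrapositively by strict descent on finite minimal periods: the one-scale return-time identity $\pi(z)=\sum_{j<\pi(S(z))}r_j$ with $r_j\in\{1,6\}$ gives $\pi(S(z))\le\pi(z)$, and you exclude equality by the algebraic observation that an orbit lying wholly in $D_0$ is linear, that $\sum_{j=0}^{4}A^j=(A^5-A^0)(A-A^0)^{-1}=0$ forces the vectors $S(z)A^j$, $0\le j\le4$ (not necessarily distinct points, but the sum argument is unaffected), to add to zero, and that nonnegativity of coordinates in $[0,1)^2$ then forces them all to be $(0,0)\notin D_0$. This \emph{proves}, for the periodic orbits that are all your descent needs, what the paper asserts from the figure about hitting $D_1$, and it uses only the single-scale relation \eqref{eqselfsimilar} plus the no-intermediate-return property, not the iterated coding of Lemma~\ref{lemsubstitution}. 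What you give up is quantitative output: the paper's route simultaneously delivers Proposition~\ref{propperiodic} and hence the exact period lengths of Theorem~\ref{thperiods1}, while your descent yields the dichotomy only. Both proofs rest equally on the bookkeeping you correctly flag --- $T$-invariance of $\mathcal D$ (equivalently $\hat T=T$), return times $1$ and $6$ with no intermediate visits to $\mathcal D/\gamma^2$, and periodicity of all points of $\mathcal P\cup\mathcal R$ --- which the paper likewise reads off Figures~\ref{figT1} and~\ref{figP1}, so you are at parity there; your minimality claim $\pi(w)=\sum_{j<p}r_j$ is also sound, since any period of $w\in\mathcal D/\gamma^2$ is a return time and hence of the form $\sum_{j<k}r_j$ with $T^k(S(z))=S(z)$, forcing $k\ge p$.
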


We postpone the proof to Section~\ref{sectgeneral}, where the more general
Proposition~\ref{propperiodic} and Theorem~\ref{thmperiodic} are proved (with
$U(z)=z/\gamma^2$, $R(z)=z$, $\hat T(z)=T(z)$, $\hat\pi(z)=\pi(z)$,
and $z\in D_1$ or $T(z)\in D_1$ for all $z\in\mathcal D$,
$|\sigma^n(1)|\to\infty$, see below).

(\ref{eqselfsimilar}) and Figure~\ref{figP1} suggest to define a
substitution (or morphism) $\sigma$ on the alphabet $\mathcal A=\{0,1\}$, i.e.,
a map $\sigma:\mathcal A\to\mathcal A^*$ (where $\mathcal A^*$ denotes
the set of words with letters in $\mathcal A$), by
$$
\sigma:\quad 0\mapsto 0\qquad 1\mapsto 101101
$$
in order to code the trajectory of the scaled domains until their return to
$\mathcal D/\gamma^2$: We have
$T^{k-1}(D_\ell/\gamma^2)\subseteq D_{\sigma(\ell)[k]}$ and
$T^{|\sigma(\ell)|}(z/\gamma^2)=T(z)/\gamma^2$ for all $z\in
D_\ell$, where $w[k]$ denotes the $k$-th letter of the word $w$ and
$|w|$ denotes its length.
Furthermore, we have $T^k(D_\ell/\gamma^2)\cap\mathcal D/\gamma^2=\emptyset$
for $1\le k<|\sigma(\ell)|$.
Extend the definition of $\sigma$ naturally to words in $\mathcal A^*$ by
setting $\sigma(vw)=\sigma(v)\sigma(w)$, where $vw$ denotes the concatenation
of $v$ and $w$.
Then we get the following lemma, which resembles Proposition~1 by 
Poggiaspalla~\cite{P}.

\begin{lemma}
For every integer $n\ge0$ and every $\ell\in\{0,1\}$, we have
\begin{itemize}
\item
$T^{|\sigma^n(\ell)|}(z/\gamma^{2n})=T(z)/\gamma^{2n}$ for all $z\in D_\ell$,
\item
$T^{k-1}(D_\ell/\gamma^{2n})\subseteq  D_{\sigma^n(\ell)[k]}$ for all $k$,
$1\le k\le|\sigma^n(\ell)|$
\item
$T^k(D_\ell/\gamma^{2n})\cap\mathcal D/\gamma^{2n}=\emptyset$ for all $k$,
$1\le k<|\sigma^n(\ell)|$.
\end{itemize}
\end{lemma}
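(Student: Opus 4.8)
The plan is to prove all three assertions simultaneously by induction on $n$, exploiting the decomposition $\sigma^{n+1}=\sigma^n\circ\sigma$. The base case $n=0$ is immediate, since $\sigma^0$ is the identity and $|\sigma^0(\ell)|=1$ (the first bullet reads $T(z)=T(z)$, the second reads $D_\ell\subseteq D_\ell$, and the third is vacuous). The case $n=1$ is exactly the three facts stated immediately before the lemma, namely $T^{|\sigma(\ell)|}(z/\gamma^2)=T(z)/\gamma^2$, the inclusions $T^{k-1}(D_\ell/\gamma^2)\subseteq D_{\sigma(\ell)[k]}$, and $T^k(D_\ell/\gamma^2)\cap\mathcal D/\gamma^2=\emptyset$ for $1\le k<|\sigma(\ell)|$. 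These level-one facts will also serve as the one-step ingredient in the induction step.

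For the step, I assume the three assertions at level $n$ and fix $\ell\in\{0,1\}$. Writing $\sigma(\ell)=\ell_1\cdots\ell_m$ with $m=|\sigma(\ell)|$, so that $\sigma^{n+1}(\ell)=\sigma^n(\ell_1)\cdots\sigma^n(\ell_m)$, I note that any point of $D_\ell/\gamma^{2(n+1)}$ is of the form $w/\gamma^{2n}$ with $w=u/\gamma^2$ and $u\in D_\ell$. The level-one inclusions give $T^{j}(w)\in D_{\ell_{j+1}}$ for $0\le j\le m-1$, so I may apply the induction hypothesis successively to $w/\gamma^{2n},T(w)/\gamma^{2n},\dots,T^{m-1}(w)/\gamma^{2n}$. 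Each application consumes $|\sigma^n(\ell_{j+1})|$ iterates of $T$, codes them by $\sigma^n(\ell_{j+1})$, and advances from $T^{j}(w)/\gamma^{2n}$ to $T^{j+1}(w)/\gamma^{2n}$. Concatenating the $m$ blocks shows that the first $\sum_{j=1}^m|\sigma^n(\ell_j)|=|\sigma^{n+1}(\ell)|$ iterates of $w/\gamma^{2n}$ are coded by $\sigma^{n+1}(\ell)$ (this is the second bullet at level $n+1$) and terminate at $T^m(w)/\gamma^{2n}$. The level-one first bullet then yields $T^m(w)=T^{|\sigma(\ell)|}(u/\gamma^2)=T(u)/\gamma^2$, hence $T^{|\sigma^{n+1}(\ell)|}(u/\gamma^{2(n+1)})=T(u)/\gamma^{2(n+1)}$, which is the first bullet at level $n+1$.

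I expect the third bullet to be the step that demands care, because it mixes the two scales $\gamma^{2n}$ and $\gamma^{2(n+1)}$. The auxiliary fact I will record first is the nesting $\mathcal D/\gamma^{2(n+1)}\subseteq\mathcal D/\gamma^{2n}$: every point of $\mathcal D/\gamma^2$ is nonzero (as $\mathcal D$ excludes the origin) and has coordinate sum below $2/\gamma^2<1$, hence lies outside $\mathcal R$, so $\mathcal D/\gamma^2\subseteq\mathcal D$ and the nesting follows by iteration. I then split the times $1\le k<|\sigma^{n+1}(\ell)|$ into interior times and the block-boundary times $k_j=|\sigma^n(\ell_1\cdots\ell_j)|$. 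At an interior time the iterate lies strictly inside one block, so the induction hypothesis keeps it out of $\mathcal D/\gamma^{2n}$ and, by the nesting, out of $\mathcal D/\gamma^{2(n+1)}$. At a boundary time with $1\le j\le m-1$ the iterate equals $T^j(w)/\gamma^{2n}$, and since membership in $\mathcal D/\gamma^{2(n+1)}$ is equivalent to $T^j(w)\in\mathcal D/\gamma^2$, the level-one third bullet (which forbids $T^j(u/\gamma^2)\in\mathcal D/\gamma^2$ for $1\le j<m$) excludes it. This completes the induction; the only real obstacle is precisely the scale-mixing bookkeeping of the third bullet, the first two being a routine chaining of the hypothesis.
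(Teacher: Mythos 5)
Your proof is correct and is essentially the paper's own argument: the paper derives this lemma as the special case $U(z)=z/\gamma^2$, $\hat T=T$, $\varepsilon=1$ of the general Lemma~3.1, whose proof is the same induction that decomposes $\sigma^{n+1}(\ell)$ into the blocks $\sigma^n(\ell_1)\cdots\sigma^n(\ell_m)$ along $\sigma(\ell)=\ell_1\cdots\ell_m$, chains the level-$n$ hypothesis through the blocks, and proves the avoidance claim by splitting the times into block-interior ones (excluded by the inductive hypothesis together with the nesting of the scaled domains) and block boundaries (excluded by the one-step facts), exactly as you do. Your explicit verification of $\mathcal D/\gamma^2\subseteq\mathcal D$ is a detail the paper absorbs into its standing assumption $U:\mathcal D\to\mathcal D$, so it is good that you checked it.
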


The proof is again postponed to Section~\ref{sectgeneral},
Lemma~\ref{lemsubstitution}.
This lemma allows to determine the minimal period lengths:
If $z\in D_\alpha$, then
$$
T^{|\sigma^n(0101010101)|}(z/\gamma^{2n})=
T^{|\sigma^n(101010101)|}(T(z)/\gamma^{2n})=
\cdots=T^{10}(z)/\gamma^{2n}=z/\gamma^{2n}
$$
for all $n\ge0$.
The only points of the form $T^k(z/\gamma^{2n})$, $1\le k\le5|\sigma^n(01)|$, 
which lie in $\mathcal D/\gamma^{2n}$ are the points $T^m(z)/\gamma^{2n}$, 
$1\le m\le9$, which are all different from $z/\gamma^{2n}$ if $\pi(z)=10$.
Therefore, we obtain $\pi(z/\gamma^{2n})=5|\sigma^n(01)|$ in this case.
A point $\tilde z$ lies in the trajectory of $z/\gamma^{2n}$ if and only if
$S^n(\tilde z)=T^m(z)$ for some $m\in\mathbb Z$, see Lemma~\ref{lemTm}.
This implies $\pi(\tilde z)=5|\sigma^n(01)|$ for these $\tilde z$ as well.
The period lengths of all points are given by the following theorem.

\begin{theorem}\label{thperiods1}
If $\lambda=\gamma$, then the minimal period lengths $\pi(z)$ of
$(T^k(z))_{k\in\mathbb Z}$ are \\
\centerline{ \begin{tabular}{cl}
$1$ & if $z=(0,0)$ or
$z=(\frac{\gamma^2}{\gamma^2+1},\frac{\gamma^2}{\gamma^2+1})$ \\
$5$ & if $z\in \mathcal R\setminus\{(0,0),
(\frac{\gamma^2}{\gamma^2+1},\frac{\gamma^2}{\gamma^2+1})\}$ \\
$(5\cdot 4^n+1)/3$ & if
$S^n(z)=T^m(\frac{1/\gamma}{\gamma^2+1},\frac2{\gamma^2+1})$ for some $n\ge0$,
$m\in\{0,1\}$ \\
$5(5\cdot 4^n+1)/3$ & if $S^n(z)\in T^m\big(D_\alpha\setminus
\{(\frac{1/\gamma}{\gamma^2+1},\frac2{\gamma^2+1})\}\big)$ for some $n\ge0$,
$m\in\{0,1\}$ \\
$(10\cdot 4^n-1)/3$ & if $S^n(z)=T^m(\frac1{\gamma^2+1},\frac1{\gamma^2+1})$
for some $n\ge0$, $m\in\{0,1,2\}$ \\
$5(10\cdot 4^n-1)/3$ & if $S^n(z)\in T^m\big(D_\beta\setminus
\{(\frac1{\gamma^2+1},\frac1{\gamma^2+1})\}\big)$ for some $n\ge0$,
$m\in\{0,1,2\}$ \\
$\infty$ & if $S^n(z)\in\mathcal D\setminus\mathcal P$ for all $n\ge0$.
\end{tabular} } \\
The minimal period length of $(a_k)_{k\in\mathbb Z}$ is
$\pi(\{\gamma a_{k-1}\},\{\gamma a_k\})$ (which does not depend on $k$).
\end{theorem}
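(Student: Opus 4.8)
The plan is to reduce the whole classification to a finite base-case computation on $\mathcal R\cup\mathcal P$ and then propagate period lengths outward through the self-inducing structure carried by $\sigma$. The two top rows need nothing new: Section~\ref{sectgolden} already shows that in $\mathcal R$ the points $(0,0)$ and $(\frac{\gamma^2}{\gamma^2+1},\frac{\gamma^2}{\gamma^2+1})$ are fixed and every other point has minimal period $5$. For the four middle rows I first determine, for each orbit meeting $\mathcal P$, both its minimal period and its $T$-coding over $\{0,1\}$. Since $\mathcal P=D_\alpha\cup T(D_\alpha)\cup D_\beta\cup T(D_\beta)\cup T^2(D_\beta)$, it is enough to inspect the orbits through $D_\alpha$ and through $D_\beta$, as drawn in Figure~\ref{figP1}. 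One finds that the generic orbit through $D_\alpha$ is coded $(01)^5$ with minimal period $10$, while the single exceptional point $(\frac{1/\gamma}{\gamma^2+1},\frac2{\gamma^2+1})$ closes up after the coding $01$ and has period $2$; likewise the generic orbit through $D_\beta$ is coded $(110)^5$ with period $15$, while $(\frac1{\gamma^2+1},\frac1{\gamma^2+1})$ is coded $110$ and has period $3$. This accounts for the four base periods $2,3,10,15$ listed before the theorem.

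Next I scale these orbits. By the substitution lemma, scaling a periodic point $w$ by $\gamma^{-2n}$ replaces its coding word $W$ (of length $\pi(w)$) by $\sigma^n(W)$ and gives $T^{|\sigma^n(W)|}(w/\gamma^{2n})=w/\gamma^{2n}$, so $\pi(w/\gamma^{2n})=|\sigma^n(W)|$. From $\sigma(0)=0$ and $\sigma(1)=101101$ one computes $|\sigma^n(0)|=1$ and $|\sigma^n(1)|=(5\cdot4^n-2)/3$, whence
\[
|\sigma^n(01)|=\frac{5\cdot4^n+1}3,\qquad |\sigma^n(110)|=\frac{10\cdot4^n-1}3 ,
\]
and the generic orbits, being five times longer, yield the lengths $5|\sigma^n(01)|$ and $5|\sigma^n(110)|$. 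These are exactly the four middle rows. To pass from the chosen orbit representatives to \emph{all} points described in each row I invoke Lemma~\ref{lemTm}: $\tilde z$ lies on the trajectory of $w/\gamma^{2n}$ iff $S^n(\tilde z)=T^m(w)$ for some $m$, so the conditions ``$S^n(z)=T^m(\cdots)$'' and ``$S^n(z)\in T^m(D_\alpha\setminus\{\cdots\})$'', etc., describe precisely the scaled orbits, and all points on a common orbit share one minimal period. The bottom row, $\pi(z)=\infty$ when $S^n(z)\in\mathcal D\setminus\mathcal P$ for every $n$, is immediate from Theorem~\ref{thmper}.

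The main obstacle is the minimality of the period, and here one must not read it off the coding word: the coding $(01)^5$ of a generic $D_\alpha$ orbit is imprimitive, yet the orbit has minimal period $10$ because its ten points are geometrically distinct. The correct argument uses the third bullet of the substitution lemma, which guarantees that the scaled orbit meets $\mathcal D/\gamma^{2n}$ exactly at the $\pi(w)$ scaled base points $T^j(w)/\gamma^{2n}$; these remain pairwise distinct, so any period of the scaled orbit must let the point visit all of them, and by the first bullet the shortest time to do so and return is exactly $|\sigma^n(W)|$. A secondary, purely bookkeeping, matter is to check the ranges of $m$: $m\in\{0,1\}$ in the $D_\alpha$ rows because $\mathcal P$ contains $D_\alpha$ and $T(D_\alpha)$, and $m\in\{0,1,2\}$ in the $D_\beta$ rows because it contains $D_\beta$, $T(D_\beta)$ and $T^2(D_\beta)$.

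For the concluding sentence I recall from the introduction that with $x=\{\gamma a_{k-1}\}$ and $y=\{\gamma a_k\}$ one has $T(x,y)=(\{\gamma a_k\},\{\gamma a_{k+1}\})$, so advancing the index of the sequence corresponds to applying $T$ once; hence the trajectory of $z=(\{\gamma a_{k-1}\},\{\gamma a_k\})$ runs through the consecutive pairs of the sequence. Because $\gamma$ is irrational, $\{\gamma a_k\}$ determines the integer $a_k$, so periodicity of $(T^k(z))$ is equivalent to periodicity of $(a_k)$ with the same minimal period; and replacing $k$ by $k+1$ only shifts the trajectory, leaving this period unchanged. This gives $\pi(\{\gamma a_{k-1}\},\{\gamma a_k\})$ as the minimal period of $(a_k)$, independent of $k$.
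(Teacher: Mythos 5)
Your proposal is correct and follows essentially the same route as the paper: base periods $2,3,10,15$ on $\mathcal P$ read off Figure~\ref{figP1}, propagation by $\pi(w/\gamma^{2n})=|\sigma^n(W)|$ with minimality via the third bullet of the substitution lemma, orbit membership via Lemma~\ref{lemTm}, the computation $|\sigma^n(0)|=1$, $|\sigma^n(1)|=(5\cdot4^n-2)/3$, and Theorem~\ref{thmper} for the aperiodic row. Your coding $110$ for the $D_\beta$ orbit is a cyclic rotation of the paper's $101$, which is harmless since $|\sigma^n(\cdot)|$ depends only on the letter counts, and your explicit justification of the final sentence about $(a_k)$ (injectivity of $a\mapsto\{\gamma a\}$ on $\mathbb Z$) is a detail the paper leaves implicit.
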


\begin{proof}
By Theorem~\ref{thmper}, Proposition~\ref{propperiodic} and the remarks
preceding the theorem, it suffices to calculate $|\sigma^n(0)|$ and
$|\sigma^n(1)|$.
Clearly, we have $|\sigma^n(0)|=1$ for all $n\ge0$ and thus
$$
|\sigma^n(1)| = |\sigma^{n-1}(101101)| = 4|\sigma^{n-1}(1)|+2 =
4(5\cdot 4^{n-1}-2)/3+2 = (5\cdot 4^n-2)/3.
$$
If $S^n(z)\in T^m(D_\alpha)$, then $\pi(z)=|\sigma^n(01)|$ and
$\pi(z)=5|\sigma^n(01)|$ respectively.
If $S^n(z)\in T^m(D_\beta)$, then $\pi(z)=|\sigma^n(101)|$ and
$\pi(z)=5|\sigma^n(101)|$ respectively.
\end{proof}

Now consider aperiodic points $z\in[0,1)^2$, i.e.,
$S^n(z)\in\mathcal D\setminus\mathcal P$ for all $n\ge0$.
We can write
$$
S(z) = \gamma^2T^{s(z)}(z) = \gamma^2\big(zA^{s(z)}+t(z)\big)
$$
for some $t(z)$ by using (\ref{eqTA}).
Note that $T(z)=zA$ for $z\in D_0$ and $T(z)=zA+(0,1)$ for $z\in D_1$.
For $z\in\mathcal D/\gamma^2$, we have $s(z)=0$ and $t(z)=0$.
For $z\in T^k(D_1/\gamma^2)$, $1\le k\le 5$, we have $s(z)=6-k$,
$$
t(z)=\left\{\begin{array}{ll}
(0,1)&\mbox{if }s(z)\in\{1,2\}, \\
(0,1)A^2+(0,1)=(1/\gamma,1/\gamma^2)&\mbox{if }s(z)=3, \\
(0,1)A^3+(0,1)A^2+(0,1)=(0,-1/\gamma)&\mbox{if }s(z)\in\{4,5\}.
\end{array}\right.
$$
We obtain inductively
$$
S^n(z) = \gamma^{2n}zA^{s(z)+s(S(z))+\ldots+s(S^{n-1}(z))}+
\sum_{k=0}^{n-1}\gamma^{2(n-k)}t(S^k(z))A^{s(S^{k+1}(z))+\cdots+s(S^{n-1}(z))}.
$$
If $z\in\mathbb Q(\gamma)^2$, then we have 
\begin{gather*}
(S^n(z))' = 
\frac{\left(zA^{s(z)+s(S(z))+\cdots+s(S^{n-1}(z))}\right)'}{\gamma^{2n}}+
\sum_{k=0}^{n-1} \frac{\left(t(S^k(z))A^{s(S^{k+1}(z))+\cdots+s(S^{n-1}(z))}
\right)'}{\gamma^{2(n-k)}}\, \\
\left\|(S^n(z))'\right\|_\infty \le
\frac{\max_{h\in\mathbb Z}\|(zA^h)'\|_\infty}{\gamma^{2n}} +
\sum_{k=0}^{n-1}\frac{\max_{h\in\mathbb Z,w\in\mathcal D\setminus\mathcal P}
\|(t(w)A^h)'\|_\infty}{\gamma^{2{n-k}}}\,,
\end{gather*}
where $z'=(x',y')$ if $z=(x,y)$ and $x',y'$ are the algebraic conjugates of 
$x,y$.
Since
\begin{multline*}
t(z)A^h\in\big\{(0,0),\
(0,1),(1,1/\gamma),(1/\gamma,-1/\gamma),(-1/\gamma,-1),(-1,0), \\
(1/\gamma,1/\gamma^2),(1/\gamma^2,-1/\gamma^2),(-1/\gamma^2,-1/\gamma),
(-1/\gamma,0),(0,1/\gamma),\\
(0,-1/\gamma),(-1/\gamma,-1/\gamma^2),(-1/\gamma^2,1/\gamma^2),
(1/\gamma^2,1/\gamma),(1/\gamma,0)\big\}
\end{multline*}
and $zA^h$ takes only the values $z$, $zA$, $zA^2$, $zA^3$ and $zA^4$, we obtain
$$
\left\|(S^n(z))'\right\|_\infty\le
\frac{\max_{h\in\mathbb Z}\|(zA^h)'\|_\infty}{\gamma^{2n}} +
\sum_{k=0}^{n-1}\frac{\gamma^2}{\gamma^{2(n-k)}}<\frac{C(z)}{\gamma^{2n}}+\gamma
$$
for some constant $C(z)$.
If $z\in(\frac1Q\mathbb Z[\gamma])^2$ for some integer $Q\ge1$, then
$S^n(z)\in(\frac1Q\mathbb Z[\gamma])^2$.
Since there exist only finitely many points
$w\in(\frac1Q\mathbb Z[\gamma]\cap[0,1))^2$ with $\|w'\|_\infty<C(z)+\gamma$,
we must have $\|(S^n(z))'\|_\infty\le\gamma$ for some $n\ge0$, which proves the
following proposition.

\begin{proposition}\label{propfinite}
Let $z\in(\frac1Q\mathbb Z[\gamma]\cap[0,1))^2$ be an aperiodic point.
Then there exists an aperiodic point
$\tilde z\in(\frac1Q\mathbb Z[\gamma])^2\cap\mathcal D$ with
$\|\tilde z'\|_\infty\le\gamma$.
\end{proposition}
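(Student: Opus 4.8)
The plan is to combine the conjugate-size bound already obtained above with an integrality (finiteness) argument, so as to turn an asymptotic estimate into one that is actually attained at some finite step.

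First I would record what aperiodicity provides: by assumption $S^n(z)\in\mathcal D\setminus\mathcal P$ for every $n\ge0$, so in particular each iterate $S^n(z)$ lies in $\mathcal D\subseteq[0,1)^2$, whence both coordinates lie in $[0,1)$. Next I would verify that the whole orbit stays in $(\frac1Q\mathbb Z[\gamma])^2$: the matrix $A$ has entries in $\mathbb Z[\gamma]$ (note $1/\gamma=\gamma-1$), we have $\gamma^2\in\mathbb Z[\gamma]$, and the translation vector $t(w)$ ranges over the finite list of values displayed above, all lying in $\mathbb Z[\gamma]^2$. Hence $z\mapsto\gamma^2(zA^{s(z)}+t(z))$ sends $\frac1Q\mathbb Z[\gamma]$-points to $\frac1Q\mathbb Z[\gamma]$-points, and so $S^n(z)\in(\frac1Q\mathbb Z[\gamma]\cap[0,1))^2$ for all $n\ge0$.

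The heart of the matter is the bound established just above, $\|(S^n(z))'\|_\infty<C(z)/\gamma^{2n}+\gamma$, valid for every $n\ge0$. By itself this only yields $\limsup_n\|(S^n(z))'\|_\infty\le\gamma$ and does \emph{not} force any single iterate to satisfy $\|(S^n(z))'\|_\infty\le\gamma$; this is the one genuinely delicate point, and I expect it to be resolved by arithmetic rather than analysis. Since $\gamma-\gamma'=\sqrt5$, a coordinate $\xi=(a+b\gamma)/Q\in[0,1)$ with $|\xi'|$ bounded forces $b=Q(\xi-\xi')/\sqrt5$ into a bounded set, and then $a$ as well; consequently only finitely many points $w\in(\frac1Q\mathbb Z[\gamma]\cap[0,1))^2$ satisfy $\|w'\|_\infty<C(z)+\gamma$. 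Every iterate $S^n(z)$ lies in this finite set, using $C(z)/\gamma^{2n}\le C(z)$.

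Finally I would invoke pigeonhole: the infinite orbit $(S^n(z))_{n\ge0}$ takes only finitely many values, so some value $w^\ast$ occurs for infinitely many indices $n$. Along that subsequence $C(z)/\gamma^{2n}\to0$, whence $\|(w^\ast)'\|_\infty\le\gamma$. Choosing any such index and setting $\tilde z=S^n(z)=w^\ast$ produces a point in $(\frac1Q\mathbb Z[\gamma])^2\cap\mathcal D$ with $\|\tilde z'\|_\infty\le\gamma$; it is aperiodic because $S^m(\tilde z)=S^{m+n}(z)\in\mathcal D\setminus\mathcal P$ for all $m\ge0$, which is exactly the assertion of the proposition.
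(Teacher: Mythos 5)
Your proof is correct and follows essentially the same route as the paper, whose proof is exactly the computation preceding the proposition: the conjugate bound $\|(S^n(z))'\|_\infty<C(z)/\gamma^{2n}+\gamma$ combined with the finiteness of the set of points in $(\frac1Q\mathbb Z[\gamma]\cap[0,1))^2$ with conjugate norm below $C(z)+\gamma$. The only difference is that you spell out steps the paper leaves implicit --- the $S$-invariance of $(\frac1Q\mathbb Z[\gamma])^2$, the finiteness argument via $\gamma-\gamma'=\sqrt5$, and the pigeonhole/recurrence step that upgrades the asymptotic estimate to $\|\tilde z'\|_\infty\le\gamma$ at an actual finite index --- all of which are accurate completions of the intended argument.
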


For every denominator $Q\ge1$, it is therefore sufficient to check the
periodicity of the (finite set of) points
$z\in(\frac1Q\mathbb Z[\gamma])^2\cap\mathcal D$ with $\|z'\|_\infty\le\gamma$
in order to determine if all points in $(\frac1Q\mathbb Z[\gamma]\cap[0,1))^2$
are periodic.

For $Q=1$, we have to consider $z=(x,y)\in\mathcal D$ with 
$x,y\in\mathbb Z[\gamma]\cap[0,1)$ and $|x'|,|y'|\le\gamma$, hence 
$(x,y)\in\{0,1/\gamma\}^2$.
Since $(0,0)$ and $(1/\gamma,1/\gamma)$ are in $\mathcal R$, it only remains to
check the periodicity of $(0,1/\gamma)$ and $(1/\gamma,0)$.
These two points lie in $\mathcal P$, thus Conjecture~\ref{cj} is proved for
$\lambda=\gamma$.

For $Q=2$, a short inspection shows that all points 
$z\in(\frac12\mathbb Z[\gamma]\cap[0,1))^2$ are periodic as well.
The situation is completely different for $Q=3$, and we have
\begin{align*}
S(0,1/3)&=(0,\gamma^2/3),\qquad
S(0,\gamma^2/3)=\gamma^2\big((0,\gamma^2/3)A^5+(0,-1/\gamma)\big)=(0,2/3), \\
S(0,2/3)&=\gamma^2\big((0,2/3)A^5+(0,-1/\gamma)\big)=\big(0,1/(3\gamma^2)\big),
\quad S^4(0,1/3)=S\big(0,1/(3\gamma^2)\big)=(0,1/3).
\end{align*}
This implies $S^n(0,1/3)\in\mathcal D\setminus\mathcal P$ for all $n\ge0$
and $\pi(0,1/3)=\infty$ by Theorem~\ref{thperiods1}.

\begin{theorem}
$\pi(z)$ is finite for all points $z\in(\mathbb Z[\gamma]\cap[0,1))^2$,
but $(T^k(0,1/3))_{k\in\mathbb Z}$ is aperiodic.
\end{theorem}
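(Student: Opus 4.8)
The plan is to split the statement into its two assertions and reduce each to material already established.

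For the finiteness assertion (which is Conjecture~\ref{cj} for $\lambda=\gamma$), I would argue by contradiction using Proposition~\ref{propfinite} with $Q=1$: if some point of $(\mathbb Z[\gamma]\cap[0,1))^2$ were aperiodic, there would exist an aperiodic point $\tilde z=(x,y)\in\mathbb Z[\gamma]^2\cap\mathcal D$ with $\|\tilde z'\|_\infty\le\gamma$. The task then reduces to enumerating all such $\tilde z$ and checking each one is periodic. Writing a coordinate as $w=a+b\gamma$ with $a,b\in\mathbb Z$, its conjugate is $w'=a+b\gamma'$ with $\gamma'=-1/\gamma$; since $w-w'=b\sqrt5$ while $|w|<1$ and $|w'|\le\gamma$, one gets $|b|\sqrt5<1+\gamma=\gamma^2$, forcing $b\in\{-1,0,1\}$. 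A short case analysis on $a$ then leaves exactly $w\in\{0,1/\gamma\}$ for each coordinate, the value $b=-1$ being ruled out because the only candidate $2-\gamma=1/\gamma^2$ has conjugate $\gamma^2>\gamma$. Hence the only points to inspect are the four points of $\{0,1/\gamma\}^2$.

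Next I would locate these four points within the partition already set up: $(0,0)$ and $(1/\gamma,1/\gamma)$ lie in $\mathcal R$, hence are periodic by Theorem~\ref{thmper}, while the remaining two, $(0,1/\gamma)$ and $(1/\gamma,0)$, lie in $\mathcal P$ and are therefore periodic since every point of $\mathcal P$ is periodic. Thus no aperiodic point with coordinates in $\mathbb Z[\gamma]\cap[0,1)$ exists, which yields the first assertion.

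For the second assertion, the governing principle is that a periodic $S$-orbit produces an \emph{aperiodic} $T$-orbit. I would compute the $S$-orbit of $(0,1/3)$ directly from $S(z)=\gamma^2\big(zA^{s(z)}+t(z)\big)$, obtaining the $4$-cycle $(0,1/3)\mapsto(0,\gamma^2/3)\mapsto(0,2/3)\mapsto(0,1/(3\gamma^2))\mapsto(0,1/3)$. Since $S$ is defined only on $\mathcal D\setminus\mathcal P$, the mere existence of this infinite orbit shows $S^n(0,1/3)\in\mathcal D\setminus\mathcal P$ for every $n\ge0$, and by the last line of Theorem~\ref{thperiods1} this forces $\pi(0,1/3)=\infty$.

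The main obstacle is in the second assertion, namely making the implication ``periodic $S$-orbit $\Rightarrow$ aperiodic under $T$'' airtight. One must verify that each of the four listed points genuinely lies in $\mathcal D\setminus\mathcal P$ (so that $S$ is legitimately applicable at every step) and that the four points are pairwise distinct, so that the orbit neither terminates in $\mathcal P$ nor collapses to a shorter cycle that could meet $\mathcal P$; this is what licenses the invocation of the $\infty$ case of Theorem~\ref{thperiods1}. The conjugate-bound enumeration in the first part is routine by comparison, the only care needed there being the correct exclusion of $b=-1$.
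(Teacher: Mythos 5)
Your proposal is correct and follows essentially the same route as the paper: Proposition~\ref{propfinite} with $Q=1$ reduces the first assertion to the four points of $\{0,1/\gamma\}^2$ (two in $\mathcal R$, two in $\mathcal P$), and the second assertion follows from the explicit $4$-cycle of $S$ through $(0,1/3)$, $(0,\gamma^2/3)$, $(0,2/3)$, $(0,1/(3\gamma^2))$ together with the last case of Theorem~\ref{thperiods1}. Your conjugate-bound enumeration ruling out $b=-1$ merely fills in a step the paper leaves implicit, and your caution about each cycle point lying in $\mathcal D\setminus\mathcal P$ is exactly what the paper's computation verifies (though pairwise distinctness of the four points is not actually needed).
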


\begin{figure}
\begin{minipage}{7.3cm}
\centerline{\includegraphics[scale=.8]{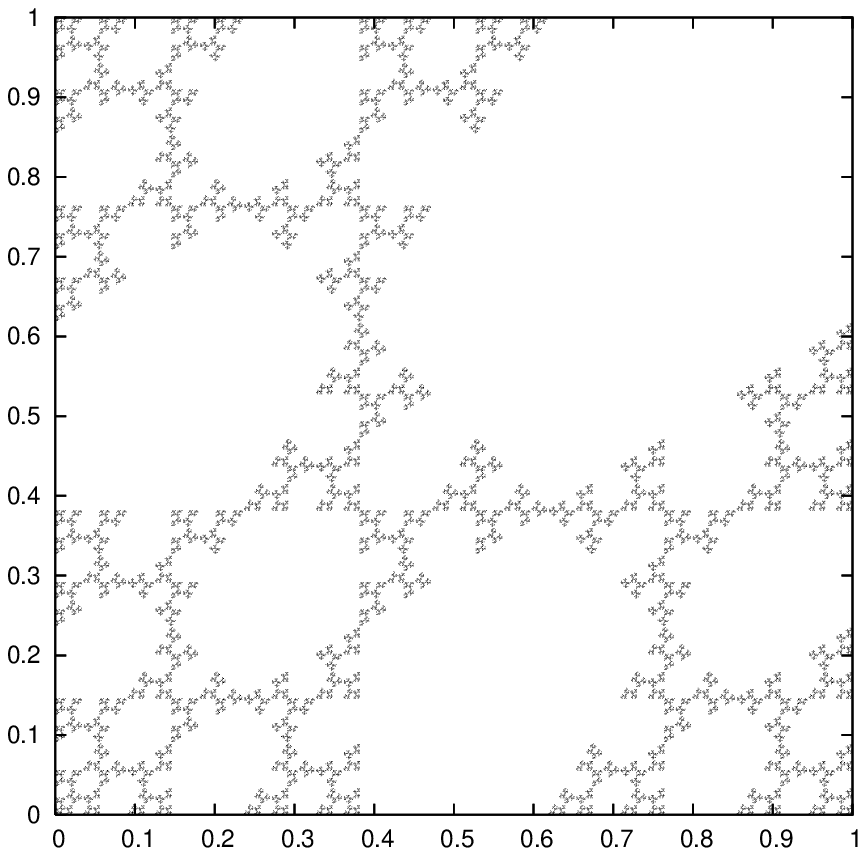}}
\caption{\mbox{Aperiodic points, $\lambda=\gamma$.}}\label{fig1a}
\end{minipage}
\begin{minipage}{7.3cm}
\centerline{\includegraphics[scale=.8]{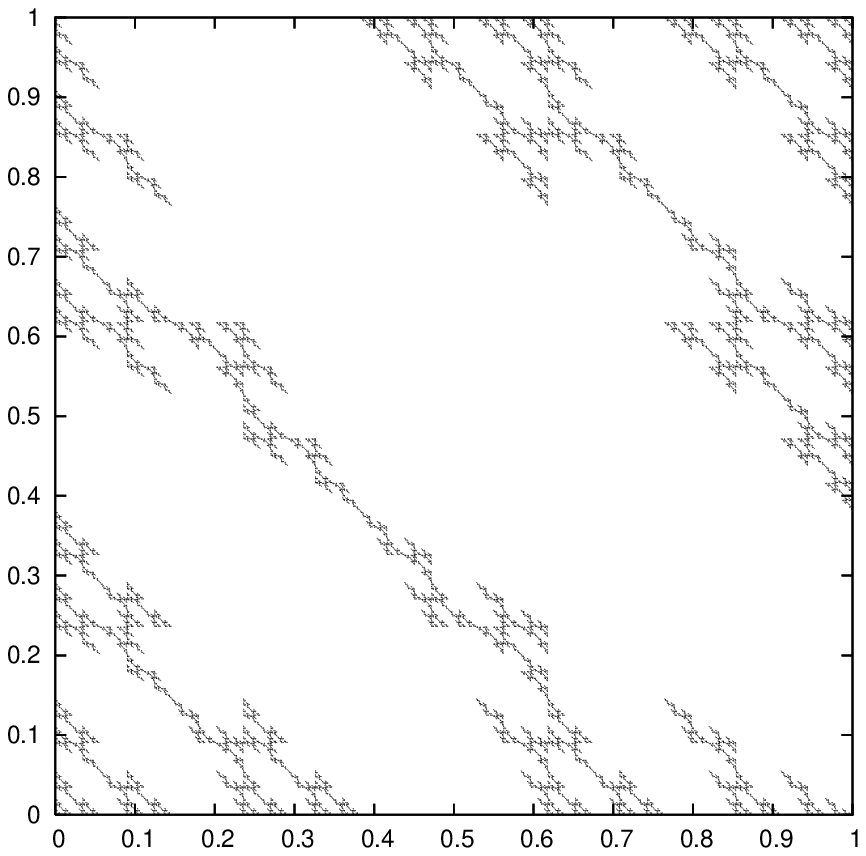}}
\caption{\mbox{Aperiodic points, $\lambda=-1/\gamma$.}}\label{fig2a}
\end{minipage}
\end{figure}

\section{General description of the method} \label{sectgeneral}
In this section, we generalize the method presented in Section~\ref{sectgolden}
in order to make it applicable for 
$\lambda=-\gamma,\pm1/\gamma,\pm\sqrt2,\pm\sqrt3$.

For the moment, we only need that $T:X\to X$ is a bijective map on a set $X$.
Fix $\mathcal D\subseteq X$, let
$$
\mathcal R= \{z\in X:\ T^m(z)\not\in\mathcal D\mbox{ for all }m\ge0\}
$$
set $r(z)=\min\{m\ge0:T^m(z)\in\mathcal D\}$ for $z\in X\setminus\mathcal R$,
and
$$
R:\ X\setminus\mathcal R\to\mathcal D,\qquad R(z)=T^{r(z)}(z).
$$
Let $\hat T$ be the first return map (of the iterates by $T$) on $\mathcal D$,
i.e.,
$$
\hat T:\ \mathcal D\to\mathcal D,\qquad \hat T(z)=RT(z)=T^{r(T(z))+1}(z),
$$
in particular $\hat T(z)=T(z)$ if $T(z)\in\mathcal D$.
Let $\mathcal A$ be a finite set, $\{D_\ell:\ell\in\mathcal A\}$ a partition
of $\mathcal D$ and define a coding map
$\iota:\mathcal D\to\mathcal A^{\mathbb Z}$ by
$\iota(z)=(\iota_k(z))_{k\in\mathbb Z}$ such that
$\hat T^k(z)\in D_{\iota_k(z)}$ for all $k\in\mathbb Z$.
Let $U:\mathcal D\to\mathcal D$, $\varepsilon\in\{-1,1\}$ and $\sigma$ a
substitution on $\mathcal A$ such that, for every $\ell\in\mathcal A$ and
$z\in D_\ell$,
$$
U\hat T(z)=\hat T^{\varepsilon|\sigma(\ell)|}U(z),
$$
$\hat T^{\varepsilon k}U(z)\not\in U(\mathcal D)$ for all $k$,
$1\le k<|\sigma(\ell)|$, and
$$
\sigma(\ell)=\left\{\begin{array}{ll} \iota_0(U(z))\,\iota_1(U(z))\,\cdots\,
\iota_{|\sigma(\ell)|-1}(U(z)) & \mbox{if }\varepsilon=1, \\
\iota_{-|\sigma(\ell)|}(U(z))\,\cdots\,\iota_{-2}(U(z))\,\iota_{-1}(U(z)) &
\mbox{if }\varepsilon=-1. \end{array}\right.
$$
Then the following lemma holds.

\begin{lemma}\label{lemsubstitution}
For every integer $n\ge0$, every $\ell\in\mathcal A$ and $z\in D_\ell$, we have
$$
U^n\hat T(z)=\hat T^{\varepsilon^n|\sigma^n(\ell)|}U^n(z),
$$
$\hat T^{\varepsilon^nk}U^n(z)\not\in U^n(\mathcal D)$ for all $k$,
$1\le k<|\sigma^n(\ell)|$, and
$$
\begin{array}{cl}
\iota_0(U^n(z))\,\iota_1(U^n(z))\, \cdots\,\iota_{|\sigma^n(\ell)|-1}(U^n(z)) =
\sigma^n(\ell) & \mbox{if }\varepsilon=1, \\
\iota_0(U^n(z))\,\iota_1(U^n(z))\, \cdots\,\iota_{|\sigma^n(\ell)|-1}(U^n(z)) =
(\sigma\bar\sigma)^{n/2}(\ell) & \mbox{if }\varepsilon=-1,\varepsilon^n=1, \\
\iota_{-|\sigma^n(\ell)|}(U^n(z))\,\cdots\,\iota_{-2}(U^n(z))\,
\iota_{-1}(U^n(z))=(\sigma\bar\sigma)^{(n-1)/2}\sigma(\ell) &
\mbox{if }\varepsilon=-1,\varepsilon^n=-1,
\end{array}
$$
where $\bar\sigma(\ell)=\ell_m\cdots\ell_2\ell_1$ if
$\sigma(\ell)=\ell_1\ell_2\cdots\ell_m$.
\end{lemma}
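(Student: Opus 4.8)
The plan is to argue by induction on $n$, proving the three assertions simultaneously; the cases $n=0$ (where $|\sigma^0(\ell)|=1$ and each statement is an identity) and $n=1$ (the hypotheses displayed just before the lemma) serve as the base. The computational engine is a \emph{block} form of the single-step intertwining, obtained by telescoping: for $p\in\mathcal D$ and $j\ge1$, writing $W=\iota_0(p)\iota_1(p)\cdots\iota_{j-1}(p)$ for the length-$j$ forward itinerary of $p$, one shows $U\hat T^j(p)=\hat T^{\varepsilon|\sigma(W)|}U(p)$, that the $\hat T^\varepsilon$-orbit of $U(p)$ meets $U(\mathcal D)$ exactly at the block boundaries $\hat T^{\varepsilon|\sigma(\iota_0(p)\cdots\iota_{i-1}(p))|}U(p)=U(\hat T^i(p))$ for $0\le i\le j$, and that between two consecutive boundaries the orbit avoids $U(\mathcal D)$. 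This is a secondary induction on $j$ using only the single-step hypotheses and the bijectivity of $\hat T$ (the latter to obtain the backward version needed when $\varepsilon=-1$). There is an evident backward analogue reading the itinerary $\iota_{-j}(p)\cdots\iota_{-1}(p)$.

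For the inductive step $n\to n+1$ I would first handle the first identity. By the induction hypothesis $U^n\hat T(z)=\hat T^{\varepsilon^n|\sigma^n(\ell)|}U^n(z)$, so $U^{n+1}\hat T(z)=U\big(\hat T^{\varepsilon^n|\sigma^n(\ell)|}U^n(z)\big)$. Applying the block lemma to $p=U^n(z)$ over the segment of length $|\sigma^n(\ell)|$ — whose itinerary word $W$ is supplied by the third assertion at level $n$ — pushes $U$ through and produces $\hat T^{\varepsilon|\sigma(W)|}U^{n+1}(z)$. It then remains to verify that the exponent equals $\varepsilon^{n+1}|\sigma^{n+1}(\ell)|$: its sign is $\varepsilon\cdot\varepsilon^n=\varepsilon^{n+1}$, and its magnitude is $|\sigma(W)|=|\sigma^{n+1}(\ell)|$ because $\sigma$ and $\bar\sigma$ share the same incidence matrix, so any composition of $n+1$ maps from $\{\sigma,\bar\sigma\}$ sends $\ell$ to a word of length $|\sigma^{n+1}(\ell)|$.

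The third assertion (the itinerary identity) is where the reversal bookkeeping lives, and I expect it to be the main obstacle. The clean formulation is that $U$ transports the bi-infinite itinerary $\iota(p)$ to $\sigma(\iota(p))$ when $\varepsilon=1$ (anchored so that position $0$ of $\iota(U(p))$ begins the block $\sigma(\iota_0(p))$), whereas for $\varepsilon=-1$ the blocks $\sigma(\iota_k(p))$ appear in \emph{reversed order} about position $0$. Iterating, the $\varepsilon=1$ case gives $\iota(U^n(p))=\sigma^n(\iota(p))$ at once. For $\varepsilon=-1$ the two reversals incurred by two successive applications combine, via the identities $\overline{\sigma(W)}=\bar\sigma(\overline W)$ and $\overline{\bar\sigma(W)}=\sigma(\overline W)$ (equivalently $R\sigma=\bar\sigma R$ and $R\bar\sigma=\sigma R$ for the reversal operator $R$), into the block map $\sigma\bar\sigma$; a short induction using $R(\sigma\bar\sigma)^t=(\bar\sigma\sigma)^tR$ then produces exactly $(\sigma\bar\sigma)^{n/2}(\ell)$ in the forward reading when $\varepsilon^n=1$, and $(\sigma\bar\sigma)^{(n-1)/2}\sigma(\ell)$ in the backward reading when $\varepsilon^n=-1$. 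Keeping the anchoring at position $0$ consistent through these reversals is the delicate point.

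Finally, the middle assertion follows from the same block lemma together with the inclusion $U^{n+1}(\mathcal D)\subseteq U(\mathcal D)$. Pushing $U$ through the orbit produces block boundaries $U(\hat T^{\varepsilon^n m}U^n(z))$ for $0\le m\le|\sigma^n(\ell)|$; such a point lies in $U^{n+1}(\mathcal D)=U(U^n(\mathcal D))$ exactly when $\hat T^{\varepsilon^n m}U^n(z)\in U^n(\mathcal D)$, which by the middle assertion at level $n$ happens only at the two ends $m=0$ and $m=|\sigma^n(\ell)|$. Between consecutive boundaries the single-step non-return hypothesis already keeps the orbit out of $U(\mathcal D)$, hence out of the smaller set $U^{n+1}(\mathcal D)$. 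This yields the required non-return for $1\le k<|\sigma^{n+1}(\ell)|$ and closes the induction.
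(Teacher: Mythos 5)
Your proposal is correct, and it establishes the same three assertions by induction on $n$, but the induction is organized in the opposite direction from the paper's. The paper peels off the \emph{innermost} application of $U$: writing $U^n=U^{n-1}\circ U$, it uses the $n=1$ hypotheses to cut the trajectory of $U(z)$ into the $m=|\sigma(\ell)|$ blocks labelled by the letters of $\sigma(\ell)$ (resp.\ $\bar\sigma(\ell)$), and applies the level-$(n-1)$ statement to each block through the inner induction displayed as (\ref{eqj}) and (\ref{eqk}) --- so the strong induction hypothesis is invoked along a word of bounded length $m$. You instead peel off the \emph{outermost} $U$: $U^{n+1}=U\circ U^n$, with your telescoped block lemma (the single-step hypotheses summed along an arbitrary itinerary word, plus the backward variant obtained from bijectivity of $\hat T$, which is precisely the paper's remark following the lemma) applied along the level-$n$ itinerary word of unbounded length $|\sigma^n(\ell)|$. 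The bookkeeping differs accordingly: you must justify $|\sigma(W)|=|\sigma^{n+1}(\ell)|$ when $W$ is one of the $(\sigma\bar\sigma)$-words, which your equal-incidence-matrix observation settles correctly (the paper needs the same fact silently, when it replaces $|\sigma^{n-1}(\ell_1\cdots\ell_m)|$ with $\ell_1\cdots\ell_m=\bar\sigma(\ell)$ by $|\sigma^n(\ell)|$), and your reversal calculus $R\sigma=\bar\sigma R$, $R\bar\sigma=\sigma R$ packages in two identities the case analysis that the paper carries out by hand in its three displayed chains; your verification that this yields exactly $(\sigma\bar\sigma)^{n/2}(\ell)$ and $(\sigma\bar\sigma)^{(n-1)/2}\sigma(\ell)$ checks out. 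One small point common to both arguments: the step ``$U(w)\in U^{n+1}(\mathcal D)$ iff $w\in U^n(\mathcal D)$'' in your treatment of the middle assertion requires $U$ injective; the paper states this assumption only after the lemma, but it is part of the standing setting (and holds for all the affine scaling maps used), so this is not a gap. In sum, your route buys a reusable intertwining lemma and a cleaner, algebraic handling of the reversals at the cost of an extra length identity; the paper's route lets the induction hypothesis do the heavy lifting over blocks of bounded length.
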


\begin{proof}
The lemma is trivially true for $n=0$, and for $n=1$ by the assumptions on
$\sigma$.
If we suppose inductively that it is true for $n-1$, then let
$\sigma(\ell)=\ell_1\ell_2\cdots\ell_m$ if $\varepsilon=1$,
$\sigma(\ell)=\ell_m\cdots\ell_2\ell_1$ if $\varepsilon=-1$, and we obtain
(by another induction) for all $j$, $1\le j\le m$,
\begin{equation}\label{eqj}
\hat T^{\varepsilon^n|\sigma^{n-1}(\ell_1\cdots\ell_{j-1}\ell_j)|}U^n(z) =
\hat T^{\varepsilon^n|\sigma^{n-1}(\ell_j)|}U^{n-1}\hat T^{\varepsilon(j-1)}U(z)
= U^{n-1}\hat T^{\varepsilon j}U(z).
\end{equation}
If $\varepsilon=1$, then this follows immediately from the induction hypothesis;
if $\varepsilon=-1$, then this follows by setting $k=|\sigma^{n-1}(\ell_j)|$ in
\begin{equation}\label{eqk}
\hat T^{(-1)^nk}U^{n-1}\hat T\big(\hat T^{-j}U(z)\big) =
\hat T^{(-1)^n(k-|\sigma^{n-1}(\ell_j)|)}U^{n-1}\hat T^{-j}U(z).
\end{equation}
Therefore, we have
$$
\hat T^{\varepsilon^n|\sigma^n(\ell)|}U^n(z) =
\hat T^{\varepsilon^n|\sigma^{n-1}(\ell_1\cdots\ell_{m-1}\ell_m)|}U^n(z) =
U^{n-1}\hat T^{\varepsilon m}U(z) =
U^{n-1}\hat T^{\varepsilon|\sigma(\ell)|}U(z) = U^n\hat T(z).
$$

If $\varepsilon=1$, then (\ref{eqj}) implies that
\begin{multline*}
\iota_0(U^n(z))\cdots\iota_{|\sigma^n(\ell)|-1}(U^n(z)) =
\big(\iota_0(U^{n-1}U(z))\cdots
\iota_{|\sigma^{n-1}(\ell_1)|-1}(U^{n-1}U(z))\big)\cdots \\
\big(\iota_0(U^{n-1}\hat T^{m-1}U(z))\cdots
\iota_{|\sigma^{n-1}(\ell_m)|-1}(U^{n-1}\hat T^{m-1}U(z))\big) =
\sigma^{n-1}(\ell_1)\cdots\sigma^{n-1}(\ell_m)=\sigma^n(\ell);
\end{multline*}
if $\varepsilon=-1$ and $\varepsilon^n=1$, then (\ref{eqj}) and (\ref{eqk})
provide
\begin{align*}
\iota_0(U^n(z))\cdots\iota_{|\sigma^n(\ell)|-1}(U^n(z)) & =
\big(\iota_{-|\sigma^{n-1}(\ell_1)|}(U^{n-1}T^{-1}U(z))\cdots
\iota_{-1}(U^{n-1}T^{-1}U(z))\big) \\
& \quad\cdots\big(\iota_{-|\sigma^{n-1}(\ell_m)|}(U^{n-1}\hat T^{-m}U(z))\cdots
\iota_{-1}(U^{n-1}\hat T^{-m}U(z))\big) \\
& = (\sigma\bar\sigma)^{(n-2)/2}\sigma(\ell_1)\cdots
(\sigma\bar\sigma)^{(n-2)/2}\sigma(\ell_m) = (\sigma\bar\sigma)^{n/2}(\ell);
\end{align*}
if $\varepsilon=-1$ and $\varepsilon^n=-1$, then
\begin{align*}
\iota_{-|\sigma^n(\ell)|}(U^n(z))\cdots\iota_{-1}(U^n(z)) & =
\big(\iota_0(U^{n-1}T^{-m}U(z))\cdots
\iota_{|\sigma^{n-1}(\ell_m)|-1}(U^{n-1}T^{-m}U(z))\big) \\
& \quad\cdots\big(\iota_0(U^{n-1}\hat T^{-1}U(z))\cdots
\iota_{|\sigma^{n-1}(\ell_1)|}(U^{n-1}\hat T^{-1}U(z))\big) \\
& = (\sigma\bar\sigma)^{(n-1)/2}(\ell_m)\cdots
(\sigma\bar\sigma)^{(n-1)/2}(\ell_1) = (\sigma\bar\sigma)^{(n-1)/2}\sigma(\ell).
\end{align*}

By (\ref{eqj}), (\ref{eqk}) and the induction hypothesis, the only points in
$(\hat T^{\varepsilon^nk}U^n(z))_{1\le k<|\sigma^n(\ell)|}$ lying in
$U^{n-1}(\mathcal D)$ are $U^n\hat T^{\varepsilon j}(z)$,
$1\le j<|\sigma(\ell)|$.
Since $\hat T^{\varepsilon j}(z)\not\in U(\mathcal D)$ for these $j$,
the lemma is proved.
\end{proof}

\noindent{\em Remark.}
If $\tilde z=\hat T^{-1}(z)\in D_\ell$, then
$U^n\hat T(\tilde z)=\hat T^{\varepsilon^n|\sigma^n(\ell)|}U^n(\tilde z)$,
thus $U^n\hat T^{-1}(z)=T^{-\varepsilon^n|\sigma^n(\ell)|}U^n(z)$.

\medskip
As in Section~\ref{sectgolden}, a key role will be played by the map $S$.
Assume that $U$ is injective, let
$$
\mathcal P=\{z\in\mathcal D:\ \hat T^m(z)\not\in U(\mathcal D)\mbox{ for all }
m\in\mathbb Z\},
$$
fix $\hat s(z)=\min\{m\ge0:\hat T^m(z)\in U(\mathcal D)\}$ or
$\hat s(z)=\max\{m\le0:\hat T^m(z)\in U(\mathcal D)\}$ for every
$z\in\mathcal D\setminus\mathcal P$, let $s(z)\in\mathbb Z$ be such that
$\hat T^{\hat s(z)}(z)=T^{s(z)}(z)$, and define
$$
S:\ \mathcal D\setminus\mathcal P \to\mathcal D,\qquad
z\mapsto U^{-1}\hat T^{\hat s(z)}(z)=U^{-1}T^{s(z)}(z).
$$

\noindent{\em Remark.}
Allowing $s(z)$ and $\hat s(z)$ to be negative decreases the $\delta$ in
Proposition~\ref{propaper} in some cases.

\begin{lemma}\label{lemTm}
If $S^nR(z)$ exists, then we have some $m\ge0$ such that $U^nS^nR(z)=T^m(z)$,
and
$$
\tilde z=T^m(z)\mbox{ for some }m\in\mathbb Z\mbox{ if and only if }
S^nR(\tilde z)=\hat T^kS^nR(z)\mbox{ for some }k\in\mathbb Z.
$$
\end{lemma}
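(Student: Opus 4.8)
The plan is to run everything through Lemma~\ref{lemsubstitution}, which is the engine that commutes powers of $U$ past powers of $\hat T$. First I would record the two consequences I need. Iterating the first bullet of Lemma~\ref{lemsubstitution} together with the Remark following it, for every $n\ge0$ and every integer $a$ there is an integer $b$ with $U^n\hat T^a(w)=\hat T^bU^n(w)$ for $w\in\mathcal D$; since $\hat T$ is the first return map, a power of $\hat T$ applied to a point is a power of $T$ applied to it, so this also reads $U^n\hat T^a(w)=T^cU^n(w)$ for a suitable $c\in\mathbb Z$. Combined with the second bullet (the intermediate iterates $\hat T^{\varepsilon k}U(z)$, $1\le k<|\sigma(\ell)|$, avoid $U(\mathcal D)$), this says precisely that the first return map of $\hat T$ to $U(\mathcal D)$ is the conjugate $U\hat T U^{-1}$; equivalently, for any $a\in\mathcal D$ the points of the $\hat T$-orbit of $U(a)$ that lie in $U(\mathcal D)$ are exactly the images $U(\hat T^i(a))$, $i\in\mathbb Z$.

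For the first assertion I would induct on $n$, writing $v_n=U^nS^nR(z)$. For $n=0$ one has $v_0=R(z)=T^{r(z)}(z)$ with $r(z)\ge0$. For the step, the definition of $S$ gives $U\,S^{n+1}R(z)=\hat T^{\hat s(S^nR(z))}\big(S^nR(z)\big)$, so applying $U^n$ and commuting it past this power of $\hat T$ by the previous paragraph turns it into a power of $T$ applied to $v_n=T^{m_n}(z)$; hence $v_{n+1}=T^{m_{n+1}}(z)$ for some integer $m_{n+1}$, which settles existence of $m$. Nonnegativity then follows because $r$ and $\hat s$ measure hitting times toward the side on which $U(\mathcal D)$ is first met, so each renormalization step keeps $v_n$ on the forward $T$-orbit of $z$; this sign bookkeeping is the one genuinely fiddly point and is where I expect to spend the most care.

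I would prove the equivalence through the intermediate claim (K): for $p,q\in\mathcal D$ whose $S$-iterates exist, $p$ and $q$ lie on a common $\hat T$-orbit if and only if $S(p)$ and $S(q)$ do. Indeed $US(p)=\hat T^{\hat s(p)}(p)$ and $US(q)=\hat T^{\hat s(q)}(q)$ lie on the $\hat T$-orbit of $p$ exactly when $p,q$ do, and by the first paragraph two points of $U(\mathcal D)$ lie on a common $\hat T$-orbit if and only if their $U^{-1}$-images, namely $S(p)$ and $S(q)$, do; injectivity of $U$ closes both directions. Iterating (K) $n$ times reduces the right-hand side of the stated equivalence to the single condition that $R(z)$ and $R(\tilde z)$ lie on a common $\hat T$-orbit. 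Finally, since $R$ is the first-hitting map of the bijection $T$ into $\mathcal D$ and $\hat T$ is the corresponding first return map, the $\hat T$-orbit of a point of $\mathcal D$ is exactly its $T$-orbit intersected with $\mathcal D$; hence $R(z),R(\tilde z)$ lie on a common $\hat T$-orbit iff they lie on a common $T$-orbit iff $\tilde z=T^m(z)$ for some $m\in\mathbb Z$, as required.

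The main obstacle, beyond the nonnegativity bookkeeping in the first assertion, is proving (K) cleanly in the orientation-reversing case $\varepsilon=-1$: there the conjugacy of Lemma~\ref{lemsubstitution} sends forward returns to backward ones, so I must phrase ``the returns to $U(\mathcal D)$ are the $U$-images of the $\hat T$-orbit'' symmetrically for $i\in\mathbb Z$ rather than only for $i\ge0$, and check that the alternation of the signs $\varepsilon^n$ recorded in Lemma~\ref{lemsubstitution} does not disturb the orbit-to-orbit correspondence.
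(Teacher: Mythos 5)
Your proposal is correct and follows essentially the paper's own route: the same consequences of Lemma~\ref{lemsubstitution} (with its Remark supplying the negative powers) drive both the induction giving $U^nS^nR(z)=T^m(z)$ and the orbit equivalence, and your claim (K) is precisely the content of the paper's inductive step $S\hat T^{k_{n-1}}S^{n-1}R(z)=SU\hat T^{k_n}S^nR(z)=\hat T^{k_n}S^nR(z)$ combined with the reverse computation $U^n\hat T^kS^nR(z)=\hat T^{k_1}U^nS^nR(z)$, just packaged as a single two-sided statement. One caveat you share with the paper rather than resolve: when $\hat s(z)<0$ is chosen, the ``nonnegativity bookkeeping'' you defer cannot actually be carried out (e.g.\ for $\lambda=-1/\gamma$ and $z\in\hat T^2U(D_0)\subseteq\mathcal D$ one has $R(z)=z$ and $USR(z)=T^{-5}(z)$, so $m\ge0$ fails for aperiodic $z$), but this is harmless since everything downstream, e.g.\ Proposition~\ref{propperiodic}, only uses $m\in\mathbb Z$.
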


\begin{proof}
Suppose that $S^nR(z)$ exists.
Then we have
$$
U^n\!S^n\!R(z)=U^{n-1}\hat T^{\hat s(S^{n-1}R(z))}S^{n-1}R(z)=
\hat T^{m_1}U^{n-1}S^{n-1}R(z)=\cdots=\hat T^{m_1+\cdots+m_n}R(z)=T^m(z)
$$
for some $m_1,\ldots,m_n,m\ge0$.

If $S^nR(\tilde z)=\hat T^kS^nR(z)$ for some $k\in\mathbb Z$, then let
$m_1,m_2\ge0$ be such that $U^nS^nR(z)=T^{m_1}(z)$,
$U^nS^nR(\tilde z)=T^{m_2}(\tilde z)$, and we have
$$
T^{m_2}(\tilde z) = U^nS^nR(\tilde z) = U^n\hat T^kS^nR(z) =
\hat T^{k_1}U^nS^nR(z) = T^{k_2+m_1}(z)
$$
for some $k_1,k_2\in\mathbb Z$, hence $\tilde z=T^m(z)$ with $m=k_2+m_1-m_2$.

If $\tilde z=T^m(z)$ for some $m\in\mathbb Z$ and $n=0$, then we have
$S^nR(\tilde z)=\hat T^{k_n}S^nR(z)$ for some $k_n\in\mathbb Z$.
If we suppose inductively that this is true for $n-1$, then
$$
S^nR(\tilde z) = S\hat T^{k_{n-1}}S^{n-1}R(z) =
S\hat T^{k_{n-1}-\hat s(S^{n-1}R(z))}US^nR(z) = SU\hat T^{k_n}S^nR(z) =
\hat T^{k_n}S^nR(z)
$$
for some $k_{n-1},k_n\in\mathbb Z$, and the statement is proved.
\end{proof}

If $rT$ is constant on every $D_\ell$, $\ell\in\mathcal A$, then we can define
$\tau:\mathcal A\to\mathbb N$ by $\tau(\ell)=r(T(z))+1$ for $z\in D_\ell$
(cf. the definition of $\hat T$) and extend $\tau$ naturally to words
$w\in\mathcal A^*$ by $\tau(w)=\sum_{\ell\in\mathcal A}|w|_\ell\tau(\ell)$.

Let $\pi(z)$, $\hat\pi(z)$ be the minimal period lengths of
$(T^k(z))_{k\in\mathbb Z}$ and $(\hat T^k(z))_{k\in\mathbb Z}$ respectively,
with $\pi(z)=\infty$, $\hat\pi(z)=\infty$ if the sequences are aperiodic.
Then the following proposition holds.

\begin{proposition}\label{propperiodic}
If $\hat\pi(S^nR(z))=p$ and
$\ell_1\cdots\ell_p=\iota_0(S^nR(z))\cdots\iota_{p-1}(S^nR(z))$, then we have
$$
\hat\pi(R(z))=|\sigma^n(\ell_1\ell_2\cdots\ell_p)|\quad\mbox{and}\quad \pi(z)=
\tau(\sigma^n(\ell_1\ell_2\cdots\ell_p))\ \mbox{(if $\tau$ is well defined)}.
$$
\end{proposition}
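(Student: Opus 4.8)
The plan is to prove the two equalities separately, deriving the period length for $\hat T$ first and then transferring it to $T$ via the substitution cost function $\tau$. The key structural input is Lemma~\ref{lemsubstitution}, which relates the action of $\hat T$ at ``level $n$'' (points of the form $U^n(w)$) to the action of $\hat T$ at ``level $0$'' (points $w\in\mathcal D$) through the substitution $\sigma$. First I would set $w=S^nR(z)$, so that by Lemma~\ref{lemTm} we have $U^n(w)=U^nS^nR(z)=T^m(z)$ for some $m$, and in particular $U^n(w)$ lies on the $T$-orbit of $z$; the period length $\pi(z)$ therefore equals $\pi(U^n(w))$, and likewise $\hat\pi(R(z))=\hat\pi(U^n(w))$ since $R(z)$ and $U^n(w)$ lie on a common $\hat T$-orbit. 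Thus it suffices to compute $\hat\pi(U^n(w))$ and $\pi(U^n(w))$ in terms of the period $p=\hat\pi(w)$ and its coding $\ell_1\cdots\ell_p=\iota_0(w)\cdots\iota_{p-1}(w)$.

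For the first equality, the idea is to iterate the relation $U\hat T(\cdot)=\hat T^{\varepsilon|\sigma(\cdot\,)|}U(\cdot)$ (and its $n$-fold version from Lemma~\ref{lemsubstitution}) along one full period of $w$ under $\hat T$. Concretely, since $\hat T^p(w)=w$ and the visited letters are $\ell_1,\dots,\ell_p$, applying $U^n$ and using $U^n\hat T(\hat T^{j}(w))=\hat T^{\varepsilon^n|\sigma^n(\iota_j(w))|}U^n(\hat T^{j}(w))$ repeatedly telescopes to
$$
\hat T^{\varepsilon^n|\sigma^n(\ell_1\cdots\ell_p)|}U^n(w)=U^n\hat T^p(w)=U^n(w).
$$
This shows $|\sigma^n(\ell_1\cdots\ell_p)|$ is a period of $U^n(w)$ under $\hat T$, so $\hat\pi(U^n(w))$ divides it. For the reverse inequality (minimality) I would use the second bullet of Lemma~\ref{lemsubstitution}: the intermediate iterates $\hat T^{\varepsilon^n k}U^n(w)$ for $1\le k<|\sigma^n(\ell_j)|$ never lie in $U^n(\mathcal D)$, and the only returns to $U^n(\mathcal D)$ within one $\sigma^n$-block are the images of $\hat T^{j}(w)$; since $p$ is the minimal $\hat T$-period of $w$, none of these returns can coincide with $U^n(w)$ before a full pass, forcing $\hat\pi(U^n(w))=|\sigma^n(\ell_1\cdots\ell_p)|$ exactly. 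Because $\varepsilon^n\in\{-1,1\}$, the sign only reverses the direction of the orbit and does not affect the period length.

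For the second equality, the plan is to pass from $\hat T$-periods to $T$-periods by accounting for the number of $T$-steps hidden in each $\hat T$-step. By definition of $\tau$, one application of $\hat T$ on a point of $D_\ell$ corresponds to $\tau(\ell)$ applications of $T$; hence the full $\hat T$-period of length $|\sigma^n(\ell_1\cdots\ell_p)|$ of $U^n(w)$ corresponds to $\tau$ summed over the letters actually visited, which is exactly $\tau(\sigma^n(\ell_1\cdots\ell_p))$ by the additive extension of $\tau$ (and the letter-counting identity $\tau(w)=\sum_\ell|w|_\ell\tau(\ell)$). One must note the visited letters over a full $\hat T$-period of $U^n(w)$ are precisely those spelled by $\sigma^n(\ell_1\cdots\ell_p)$ (when $\varepsilon^n=1$) or by the reversed word $(\sigma\bar\sigma)^{\cdots}\sigma(\cdots)$ (when $\varepsilon^n=-1$); since $\tau$ depends only on letter multiplicities and not on order, both give the same value $\tau(\sigma^n(\ell_1\cdots\ell_p))$, so the orientation issue is harmless. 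This yields $\pi(z)=\pi(U^n(w))=\tau(\sigma^n(\ell_1\cdots\ell_p))$ whenever $\tau$ is well defined, i.e.\ whenever $rT$ is constant on each $D_\ell$.

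The main obstacle I anticipate is the minimality argument in the first equality: showing that $|\sigma^n(\ell_1\cdots\ell_p)|$ is not merely \emph{a} period but the \emph{minimal} one. This requires carefully combining the injectivity of $U$ with the non-return statement of Lemma~\ref{lemsubstitution} to rule out spurious shorter periods, and to handle the $\varepsilon=-1$ case where the coding is read in reverse and the orbit direction flips. Everything else is a bookkeeping telescoping of the two identities from Lemma~\ref{lemsubstitution} together with the additivity of $\tau$.
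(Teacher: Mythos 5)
Your proposal is correct and follows essentially the same route as the paper: the paper's proof telescopes Lemma~\ref{lemsubstitution} over one full $\hat T$-period of $S^nR(z)$ to get $\hat T^{|\sigma^n(\ell_1\cdots\ell_p)|}U^nS^nR(z)=U^nS^nR(z)$ and $T^{\tau(\sigma^n(\ell_1\cdots\ell_p))}U^nS^nR(z)=U^nS^nR(z)$, identifies $U^nS^nR(z)$ with a point on the orbit of $z$ via Lemma~\ref{lemTm}, and handles minimality by the same non-return argument ``similarly to the proof of Lemma~\ref{lemsubstitution}'' that you spell out. Your write-up merely makes explicit two points the paper leaves implicit, namely the use of injectivity of $U$ in the minimality step and the fact that $\tau$ depends only on letter multiplicities so the reversed coding in the case $\varepsilon^n=-1$ gives the same value.
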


\begin{proof}
Since $U^nS^nR(z)=T^m(z)=\hat T^{\hat m}R(z)$ for some $m,\hat m\in\mathbb Z$,
and
$$
T^{\tau(\sigma^n(\ell_1\ell_2\cdots\ell_p))}U^nS^nR(z)=
\hat T^{|\sigma^n(\ell_1\ell_2\cdots\ell_p)|}U^nS^nR(z)=
U^n\hat T^pS^nR(z)=U^nS^nR(z),
$$
we have $\hat\pi(R(z))\le|\sigma^n(\ell_1\cdots\ell_p)|$ and
$\pi(z)\le\tau(\sigma^n(\ell_1\cdots\ell_p))$ (if $\tau$ exists).
Since $p$ is minimal, we can show similarly to the proof of
Lemma~\ref{lemsubstitution} that these period lengths are minimal.
\end{proof}

We obtain the following characterization of periodic points
$z\not\in\mathcal R$.
Note that all points in $\mathcal P\cup\mathcal R$ are periodic in our cases,
hence the characterization is complete.

\begin{theorem}\label{thmperiodic}
Let $R,S,T,\mathcal D,\mathcal P,\mathcal R,\sigma$ be as in the preceding 
paragraphs of this section. 
Assume that $\hat\pi(z)$ is finite for all $z\in\mathcal P$, and that for every 
$z\in\mathcal D\setminus\mathcal P$ there exist $m\in\mathbb Z$, 
$\ell\in\mathcal A$, such that $\hat T^m(z)\in D_\ell$ and
$|\sigma^n(\ell)|\to\infty$ for $n\to\infty$. 
Then we have for $z\not\in\mathcal R$: 
$$
(T^k(z))_{k\in\mathbb Z}\mbox{ is periodic if and only if } 
S^nR(z)\in\mathcal P\mbox{ for some }n\ge0.
$$
\end{theorem}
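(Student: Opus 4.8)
The plan is to reduce the whole statement to the periodicity of the first return map $\hat T$ on $\mathcal D$, and then to exploit the self-similar structure encoded by $\sigma$ through Lemmas~\ref{lemsubstitution} and~\ref{lemTm} and Proposition~\ref{propperiodic}. First I would establish the elementary reduction that, for $z\notin\mathcal R$, the orbit $(T^k(z))_{k\in\mathbb Z}$ is periodic if and only if $R(z)$ is $\hat T$-periodic. Indeed, $R(z)=T^{r(z)}(z)$ lies on the same (two-sided) $T$-orbit as $z$, and since $T$ is bijective this orbit is periodic exactly when it is finite. If it is finite it meets $\mathcal D$ in a finite nonempty set that $\hat T$ permutes, so $R(z)$ is $\hat T$-periodic; conversely $\hat T^{\hat\pi(R(z))}(R(z))=T^N(R(z))$ for some finite $N\ge1$ (each return time is finite because $\hat T$ is well defined as a map $\mathcal D\to\mathcal D$), whence $T^N(z)=z$ by injectivity of $T$. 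Thus it suffices to relate the $\hat T$-periodicity of $R(z)$ to the condition $S^nR(z)\in\mathcal P$.

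For the ``if'' direction I would proceed directly. Suppose $S^nR(z)\in\mathcal P$ for some $n\ge0$. By hypothesis $\hat\pi(S^nR(z))$ is finite, say equal to $p$; writing $\ell_1\cdots\ell_p=\iota_0(S^nR(z))\cdots\iota_{p-1}(S^nR(z))$, Proposition~\ref{propperiodic} yields $\hat\pi(R(z))=|\sigma^n(\ell_1\cdots\ell_p)|<\infty$. Hence $R(z)$ is $\hat T$-periodic and, by the reduction above, $(T^k(z))_{k\in\mathbb Z}$ is periodic.

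For the ``only if'' direction I would argue by contradiction. Assume $z$ is $T$-periodic but $S^nR(z)\notin\mathcal P$ for all $n\ge0$, so that every iterate $S^nR(z)$ is defined and lies in $\mathcal D\setminus\mathcal P$. Put $q=\hat\pi(R(z))$, which is finite by the reduction. By Lemma~\ref{lemTm}, $U^nS^nR(z)=\hat T^mR(z)$ for some $m$, so $U^nS^nR(z)$ lies on the finite $\hat T$-orbit of $R(z)$ and is $\hat T$-periodic. Iterating the conjugacy of Lemma~\ref{lemsubstitution} together with its backward form (the Remark following it) shows that $U^n$ carries the $\hat T$-orbit of $S^nR(z)$ into the $\hat T$-orbit of $U^nS^nR(z)$; since $U$, hence $U^n$, is injective, the former orbit is finite, so $S^nR(z)$ is $\hat T$-periodic with some finite period $p_n$. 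Proposition~\ref{propperiodic} then gives $q=|\sigma^n(\ell_1^{(n)}\cdots\ell_{p_n}^{(n)})|$, where $\ell_1^{(n)}\cdots\ell_{p_n}^{(n)}$ codes one period of $S^nR(z)$. Applying the standing hypothesis to $S^nR(z)\in\mathcal D\setminus\mathcal P$ produces $m$ and a letter $\ell$ with $\hat T^m(S^nR(z))\in D_\ell$ and $|\sigma^j(\ell)|\to\infty$, so $\ell$ lies in the finite set $\mathcal A^+=\{\ell\in\mathcal A:|\sigma^j(\ell)|\to\infty\}$ and, by periodicity, occurs among $\ell_1^{(n)},\dots,\ell_{p_n}^{(n)}$. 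Therefore $q\ge|\sigma^n(\ell)|\ge\min_{\ell\in\mathcal A^+}|\sigma^n(\ell)|$, and the right-hand side tends to infinity because $\mathcal A^+$ is finite. This contradicts the finiteness of $q$.

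The main obstacle is the ``only if'' direction. Two points require care: one must first show that \emph{all} the points $S^nR(z)$ inherit $\hat T$-periodicity, which forces the combination of Lemma~\ref{lemTm} with the injectivity of $U$ and the full two-sided form of the conjugacy in Lemma~\ref{lemsubstitution}; and one must then extract a lower bound for $|\sigma^n(\cdot)|$ that is uniform in $n$ along the $n$-dependent codes. The uniformity is exactly where the finiteness of the alphabet $\mathcal A$ is essential: although the distinguished growing letter may depend on $n$, it always belongs to the fixed finite set $\mathcal A^+$, so the per-letter lengths grow uniformly and eventually exceed the fixed period $q$.
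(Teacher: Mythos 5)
Your proof is correct, and its skeleton---the reduction to the first return map, the ``if'' direction via Proposition~\ref{propperiodic} and the finiteness of $\hat\pi$ on $\mathcal P$, and aperiodicity extracted from $|\sigma^n(\ell)|\to\infty$ with the finiteness of $\mathcal A$ supplying the uniformity---matches the paper's. The one genuine difference is how the lower bound on $\hat\pi(R(z))$ is obtained in the ``only if'' direction. The paper never establishes that the renormalized points $S^nR(z)$ are $\hat T$-periodic: it chooses $\ell_n$ with $\hat T^{m_n}S^nR(z)\in D_{\ell_n}$, pushes forward by $U^n$, and invokes the third assertion of Lemma~\ref{lemsubstitution} (the orbit of $U^nS^nR(z)$ avoids $U^n(\mathcal D)$ for $|\sigma^n(\ell_n)|-1$ consecutive steps) to conclude directly that $\hat\pi(R(z))=\hat\pi(U^nS^nR(z))\ge|\sigma^n(\ell_n)|$ for every $n$, with no contradiction argument. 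You instead first prove that each $S^nR(z)$ inherits $\hat T$-periodicity (via Lemma~\ref{lemTm}, the injectivity of $U^n$, and the two-sided conjugacy including the Remark after Lemma~\ref{lemsubstitution}) and then apply the exact period formula of Proposition~\ref{propperiodic} to a period word containing the growing letter. That is legitimate, but be aware that it leans on the \emph{minimality} half of Proposition~\ref{propperiodic}, which the paper itself only sketches (``similarly to the proof of Lemma~\ref{lemsubstitution}'') and whose justification is precisely the avoidance property the paper uses directly; so your route is slightly longer and less self-contained, though it buys the additional information that every renormalization $S^nR(z)$ of a periodic orbit is itself periodic. Your explicit proof of the reduction that $z$ is $T$-periodic if and only if $R(z)$ is $\hat T$-periodic, which the paper leaves implicit in passing between $\pi$ and $\hat\pi$, is a welcome addition and is argued correctly from the bijectivity of $T$ and well-definedness of $\hat T$.
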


\begin{proof}
If $S^nR(z)\in\mathcal P$, then we have
$\hat\pi(R(z))=\hat\pi(S^nR(z))<\infty$, which implies $\pi(z)<\infty$.

Suppose now that $S^nR(z)\in\mathcal D\setminus\mathcal P$ for all $n\ge0$.
Then we have $m_n\in\mathbb Z$ and $\ell_n\in\mathcal A$ such that
$\hat T^{m_n}S^nR(z)\in D_{\ell_n}$ and $|\sigma^n(\ell_n)|\to\infty$ for
$n\to\infty$ (because $\mathcal A$ is finite).
We have
$U^n\hat T^{m_n}S^nR(z)=\hat T^{\tilde m_n}U^nS^nR(z)\in U^n(D_{\ell_n})$ for
some $\tilde m_n\in\mathbb Z$, hence
$\hat T^{\tilde m_n+k}U^nS^nR(z)\not\in U^n(\mathcal D)$ for all $k$,
$1\le k<|\sigma^n(\ell_n)|$, which implies
$\pi(z)\ge\hat\pi(R(z))=\hat\pi(U^nS^nR(z))\ge|\sigma^n(\ell_n)|$ for all
$n\ge0$, thus $\pi(z)=\infty$.
\end{proof}

Assume now $\lambda\in\{\pm\sqrt2,\frac{\pm1\pm\sqrt5}2,\pm\sqrt3\}$, let
$\lambda'$ be its algebraic conjugate, $T:[0,1)^2\to[0,1)^2$,
\begin{gather}
T(x,y)=(x,y)A+(0,\lceil x+\lambda'y\rceil)\ \mbox{ with }\
A=\begin{pmatrix}0 & -1 \\ 1 & -\lambda'\end{pmatrix}, \label{eqTAgen} \\
U(z)=V^{-1}(\kappa V(z)) \nonumber
\end{gather}
with $0<\kappa<1$, $\kappa\in\mathbb Z[\lambda]$, $|\kappa\kappa'|=1$, and
$V(z)=\pm\kappa^n(z-v)$ some 
$v\in\mathbb Z[\lambda]^2$, $n\in\mathbb Z$.
Let
$$
t(z)=V\big(T^{s(z)}(z)\big)-V(z)A^{s(z)}
$$
for $z\in\mathcal D\setminus\mathcal P$.
Since $U^{-1}(z)=V^{-1}(V(z)/\kappa)$, we have
$$
S(z)=U^{-1}T^{s(z)}(z)=V^{-1}\left(\frac{V(z)A^{s(z)}+t(z)}\kappa\right).
$$
Note that $A^h=A^0$ for some $h\in\{5,8,10,12\}$,
$$
T^{-1}(x,y)=(x,y)A^{-1}+(\lceil\lambda'x+y\rceil,0)\ \mbox{ with }\
A^{-1}=\begin{pmatrix}-\lambda' & 1 \\ -1 & 0\end{pmatrix},
$$
and $T^{-1}(x,y)=(\tilde x,\tilde y)$ with $(\tilde y,\tilde x)=T(y,x)$.
Since $|\hat s(z)|<\max_{\ell\in\mathcal A}|\sigma(\ell)|$, there exists only a
finite number of values for $t(z)$, and we obtain the following proposition.

\begin{proposition}\label{propaper}
Let $T,V,\kappa$ be as above and the assumptions of Theorem~\ref{thmperiodic}
be satisfied.
Suppose that $\pi(z)=\infty$ for some
$z\in(\frac1Q\mathbb Z[\lambda]\cap[0,1))^2\setminus\mathcal R$, where $Q$ is a
positive integer.
Then there exists an aperiodic point
$\tilde z\in(\frac1Q\mathbb Z[\lambda])^2\cap\mathcal D$ with
$$
\|V(\tilde z)'\|_\infty\le\delta,\quad \mbox{where }
\delta=\frac{\max\{\|(t(z)A^h)'\|_\infty:\,z\in\mathcal D\setminus\mathcal P,\,
\pi(z)=\infty,\, h\in\mathbb Z\}}{|\kappa'|-1}\,.
$$
\end{proposition}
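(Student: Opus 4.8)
The plan is to follow the golden-mean computation of Proposition~\ref{propfinite} almost verbatim, but with the affine coordinate $V$ inserted, and to track how the Galois conjugate of $V(S^n(z))$ shrinks under iteration. First I would replace $z$ by $z_0=R(z)\in\mathcal D$: this point lies on the same $T$-orbit, so it is again aperiodic, and since $R=T^{r(z)}$ and $T$ preserves $\frac1Q\mathbb Z[\lambda]^2$, it again lies in $(\frac1Q\mathbb Z[\lambda]\cap[0,1))^2$. By Theorem~\ref{thmperiodic} the hypothesis $\pi(z)=\infty$ gives $S^n(z_0)\in\mathcal D\setminus\mathcal P$ for every $n\ge0$, so the entire forward $S$-orbit is well defined and consists of aperiodic points. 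Because $|\kappa\kappa'|=1$ forces $\kappa$ to be a unit of $\mathbb Z[\lambda]$, the maps $V$, $V^{-1}$, $T$ and hence $S$ all send $\frac1Q\mathbb Z[\lambda]^2$ into itself; in particular every $S^n(z_0)$ and every $V(S^n(z_0))$ stays in this lattice.

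Next I would iterate the identity $V(S(w))=\kappa^{-1}\big(V(w)A^{s(w)}+t(w)\big)$, which is just the displayed formula for $S$ preceding the proposition. Writing $s_k=s(S^k(z_0))$ and $t_k=t(S^k(z_0))$, this gives
\begin{equation*}
V(S^n(z_0))=\frac{V(z_0)\,A^{s_0+\cdots+s_{n-1}}}{\kappa^n}
+\sum_{k=0}^{n-1}\frac{t_k\,A^{s_{k+1}+\cdots+s_{n-1}}}{\kappa^{n-k}}.
\end{equation*}
Applying the conjugation map, and using $|\kappa'|=1/\kappa>1$ together with the fact that $A$, hence $A'$, has finite order (so $\{(A')^h:h\in\mathbb Z\}$ is finite and $\max_h\|\cdot\,(A')^h\|_\infty$ is a genuine maximum), I would bound
\begin{equation*}
\big\|(V(S^n(z_0)))'\big\|_\infty\le
\frac{\max_h\|V(z_0)'(A')^h\|_\infty}{|\kappa'|^n}
+\sum_{k=0}^{n-1}\frac{\max_h\|t_k'(A')^h\|_\infty}{|\kappa'|^{n-k}}.
\end{equation*}
Since each $S^k(z_0)$ is aperiodic, every term $\max_h\|t_k'(A')^h\|_\infty=\max_h\|(t_kA^h)'\|_\infty$ is at most the numerator $M$ defining $\delta$; summing the geometric series $\sum_{j\ge1}|\kappa'|^{-j}=1/(|\kappa'|-1)$ yields $\|(V(S^n(z_0)))'\|_\infty<C(z_0)|\kappa'|^{-n}+\delta$ with $C(z_0)=\max_h\|V(z_0)'(A')^h\|_\infty$.

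Finally I would run a finiteness-and-recurrence argument. Taking $n=0$ and using $|\kappa'|>1$ shows the whole orbit $(S^n(z_0))_{n\ge0}$ lies in the set of $w\in(\frac1Q\mathbb Z[\lambda]\cap[0,1))^2$ with $\|(V(w))'\|_\infty<C(z_0)+\delta$. This set is finite: $V$ is an invertible $\mathbb Q(\lambda)$-affine map, so bounding $\|(V(w))'\|_\infty$ bounds $\|w'\|_\infty$, while $w\in[0,1)^2$ bounds $\|w\|_\infty$, and only finitely many points of the lattice $\frac1Q\mathbb Z[\lambda]^2$ have both a coordinate and its conjugate bounded. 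Hence some value $\tilde z=S^{n_j}(z_0)$ is attained for infinitely many $n_j\to\infty$; evaluating the bound along this subsequence and letting $C(z_0)|\kappa'|^{-n_j}\to0$ forces $\|(V(\tilde z))'\|_\infty\le\delta$. As $\tilde z\in\mathcal D\setminus\mathcal P$ it is aperiodic and lies in $(\frac1Q\mathbb Z[\lambda])^2\cap\mathcal D$, which is the assertion.

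Two points need genuine care beyond the routine geometric estimate. The first is that the conjugate-norm maximum in $\delta$ may be restricted to aperiodic $z$; this is exactly what the inclusion $S^k(z_0)\in\mathcal D\setminus\mathcal P$ supplies, and it is what keeps $\delta$ as small as stated (permitting negative $\hat s(z)$, as in the remark before Lemma~\ref{lemTm}, can shrink $M$, hence $\delta$, still further). The second, and the main obstacle, is the passage from \emph{``the bound tends to $\delta$''} to \emph{``the bound $\le\delta$ is attained''}: this is not automatic and depends on the orbit being confined to a finite set, so that a value recurs and the limit may be taken along a constant subsequence. I expect the only other place where quadratic arithmetic must be invoked explicitly to be the verification that $V$ and $S$ preserve the denominator-$Q$ lattice, which rests on $\kappa$ being a unit.
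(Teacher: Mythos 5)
Your proof is correct and follows the paper's own argument essentially verbatim: you iterate $VS(w)=\kappa^{-1}\big(V(w)A^{s(w)}+t(w)\big)$ along the orbit $S^nR(z)$, pass to conjugates and sum the geometric series to get $\|(VS^nR(z))'\|_\infty<C|\kappa'|^{-n}+\delta$, and conclude by confining the orbit to the finitely many denominator-$Q$ lattice points with bounded conjugate norm. The recurrence step you single out as the main obstacle is exactly the paper's ``as in Section~\ref{sectgolden}'' finiteness argument, and your explicit checks (aperiodicity of $S^nR(z)$ justifying the restricted maximum in $\delta$, and preservation of $\frac1Q\mathbb Z[\lambda]^2$ because $\kappa$ is a unit) are precisely what the paper uses, partly implicitly.
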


\begin{proof}
First note that $\delta$ exists since $t(z)$ and $A^h$ take only finitely many
values.
If $\pi(z)=\infty$ for some
$z\in(\frac1Q\mathbb Z[\lambda]\cap[0,1))^2\setminus\mathcal R$, then
$S^nR(z)\in\mathcal D\setminus\mathcal P$ for all $n\ge0$ by
Theorem~\ref{thmperiodic}.
In particular, $S^nR(z)$ is aperiodic as well.
We use the abbreviations $s_n=s(S^nR(z))$ and $t_n=t(S^nR(z))$.
Then we obtain inductively, for $n\ge1$,
$$
VS^nR(z)=\frac{VS^{n-1}R(z)A^{s_{n-1}}+t_{n-1}}\kappa=
\frac{VR(z)A^{s_0+s_1+\cdots+s_{n-1}}}{\kappa^n}+
\sum_{k=0}^{n-1}\frac{t_kA^{s_{k+1}+\cdots+s_{n-1}}}{\kappa^{n-k}}.
$$
If we look at the algebraic conjugates, then note that $|\kappa'|>1$, and we
obtain
$$
\left\|(VS^nR(z))'\right\|_\infty <
\frac{\left\|\big(VR(z)A^{s_0+s_1+\cdots+s_{n-1}}\big)'\right\|_\infty}
{|\kappa'|^n}+\delta,
$$
thus $\left\|(VS^nR(z))'\right\|_\infty\le\delta$ for some $n\ge0$ (as in 
Section~\ref{sectgolden}), and we can choose $\tilde z=S^nR(z)$.
\end{proof}

\noindent{\em Remarks.}
\begin{itemize}
\item
The last proof shows that, for every
$z\in(\mathbb Q(\lambda)\cap[0,1))^2\setminus\mathcal R$ with $\pi(z)=\infty$,
there are only finitely many possibilities for $VS^nR(z)$, hence
$(S^nR(z))_{n\ge0}$ is eventually periodic.
\item
For every $z\in\mathcal D$ with $\pi(z)=\infty$, we have
$$ \qquad\quad
V(z)=\Big(VS^n(z)\kappa^n-\sum_{k=0}^{n-1}t_kA^{s_{k+1}+\cdots+s_{n-1}}
\kappa^k\Big)A^{-s_0-\cdots-s_{n-1}}=
-\sum_{k=0}^\infty t_kA^{-\sum_{j=0}^ks(S^j(z))}\kappa^k,
$$
which is a $\kappa$-expansion ($\kappa<1$) of $V(z)$ with
(two-dimensional) ``digits'' $-t_kA^{-s_0-s_1-\cdots-s_k}$.
\item
As a consequence of Lemma~\ref{lemTm} and the definition of $U$, for every
aperiodic point $z\in[0,1)^2\setminus\mathcal R$ and every $c>0$, there exists
some $m\in\mathbb Z$ such that $\|T^m(z)-v\|_\infty<c$.
\item
In all our cases, we have $\varepsilon=\kappa\kappa'$.
\end{itemize}

\section{The case $\lambda=-1/\gamma=\frac{1-\sqrt5}2=-2\cos\frac{2\pi}5$}
Now we apply the method of Section~\ref{sectgeneral} for $\lambda=-1/\gamma$,
i.e., $\lambda'=\gamma$.
To this end, set
$$
\mathcal D=\{(x,y)\in[0,1)^2:\,x+y\ge3-\gamma\}=D_0\cup D_1
$$
with $D_0=\{(x,y)\in\mathcal D:x+\gamma y>2\}$,
$D_1=\{(x,y)\in\mathcal D:x+\gamma y\le2\}$.
Figure~\ref{figR2} shows that $\hat T$ is given by
$\hat T(z)=T^{\tau(\ell)}(z)$ if $z\in D_\ell$, $\ell\in\mathcal A=\{0,1\}$,
with $\tau(0)=1$ and $\tau(1)=4$.
The set which is left out by the iterates of $D_0$ and $D_1$ is 
$\mathcal R=\{(0,0)\}\cup D_A\cup D_B$, with
\begin{gather*}
D_A=\{z\in[0,1)^2:T^{k+1}(z)=T^k(z)A+(0,1)\mbox{ for all }k\ge0\}, \\
D_B=\{z\in[0,1)^2:T^{k+1}(z)=T^k(z)A+(0,2)\mbox{ for all }k\ge0\}.
\end{gather*}
As in Section~\ref{sectgolden}, we have $T^5(z)=z$ for all $z\in\mathcal R$.
If we set
$$
U(z)=\frac z{\gamma^2}+\Big(\frac1\gamma,\frac1\gamma\Big)=
(1,1)-\frac{(1,1)-z}{\gamma^2},
$$
$V(z)=(1,1)-z$, $\kappa=1/\gamma^2$, $\varepsilon=1$, and
$$
\sigma:\ 0\mapsto 010\qquad 1\mapsto 01110
$$
then Figure~\ref{figP2} shows that $\sigma$ satisfies the conditions in
Section~\ref{sectgeneral}, and 
$\mathcal P=D_\alpha\cup D_\beta$ with $D_\alpha=U(D_A)$, $D_\beta=U(D_B)$.
All points in $\mathcal P$ are periodic and $|\sigma^n(\ell)|\to\infty$ as
$n\to\infty$ for all $\ell\in\mathcal A$.
Therefore, all conditions of Proposition~\ref{propperiodic} and
Theorem~\ref{thmperiodic} are satisfied, and we obtain the following theorem.

\begin{figure}
\includegraphics{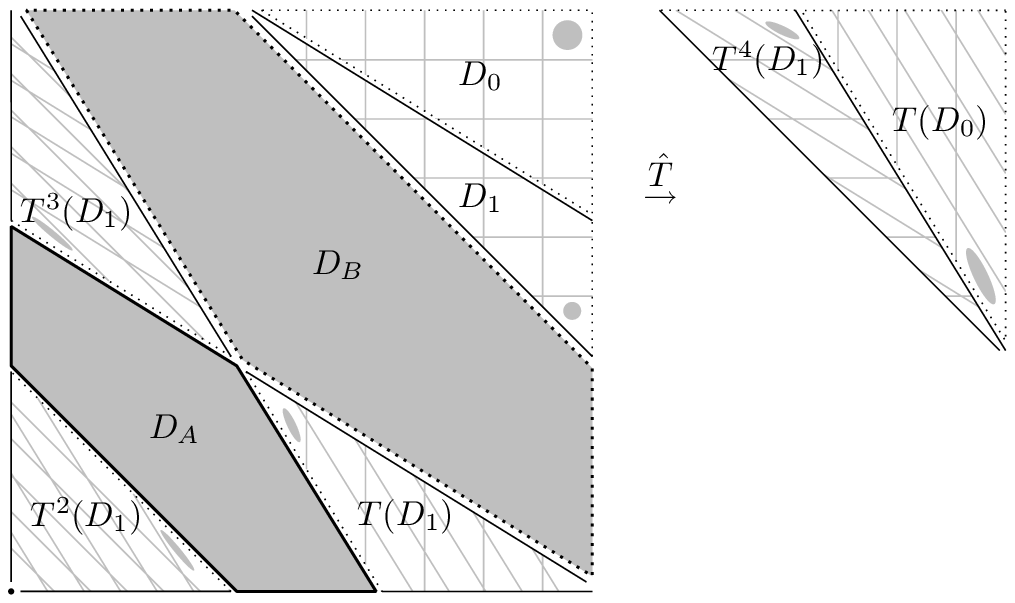}
\caption{\mbox{The map $\hat T$, $\hat T(D_0)=T(D_0)$, $\hat T(D_1)=T^4(D_1)$,
and the (gray) set $\mathcal R$, $\lambda=-1/\gamma$.}} \label{figR2}
\end{figure}

\begin{figure}
\includegraphics{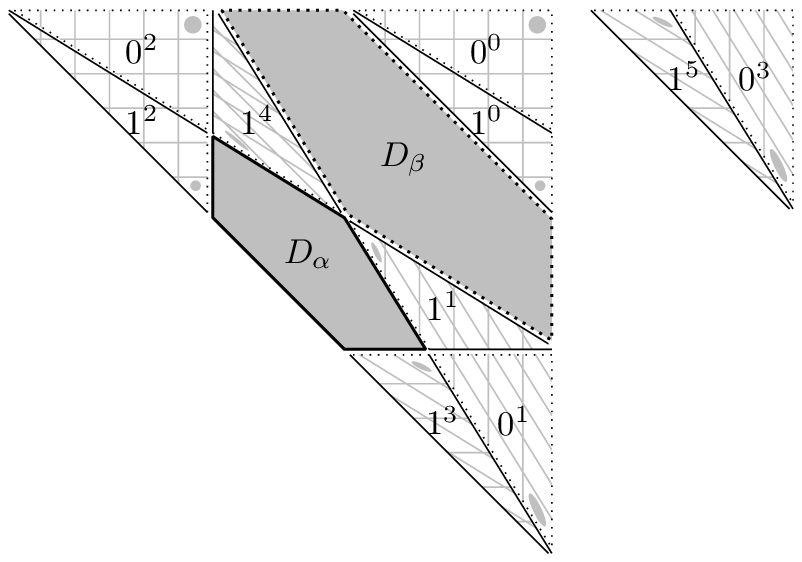}
\caption{\mbox{The trajectory of the scaled domains and $\mathcal P$,
$\lambda=-1/\gamma$. ($\ell^k$ stands for $\hat T^kU(D_\ell)$.)}} \label{figP2}
\end{figure}

\begin{theorem}\label{thperiods2}
If $\lambda=-1/\gamma$, then the period lengths $\pi(z)$ are \\
\centerline{ \begin{tabular}{cl} $1$ & if
$z\in\{(0,0),(\frac1{\gamma^2+1},\frac1{\gamma^2+1}),
(\frac2{\gamma^2+1},\frac2{\gamma^2+1})\}$ \\
$5$ & for the other points of the pentagons $D_A$ and $D_B$ \\
$2(5\cdot 4^n+1)/3$ & if
$S^nR(z)=(\frac{\gamma^2}{\gamma^2+1},\frac{\gamma^2}{\gamma^2+1})$ for some
$n\ge0$ \\
$10(5\cdot 4^n+1)/3$ & for the other points with $S^nR(z)\in D_\alpha$ for
some $n\ge0$ \\
$(5\cdot 4^n-2)/3$ & if $S^nR(z)=(\frac3{\gamma^2+1},\frac3{\gamma^2+1})$ for
some $n\ge0$ \\
$5(5\cdot 4^n-2)/3$ & for the other points with $S^nR(z)\in D_\beta$ for
some $n\ge0$ \\
$\infty$ & if $S^nR(z)\in\mathcal D\setminus\mathcal P$ for all $n\ge0$.
\end{tabular} }
\end{theorem}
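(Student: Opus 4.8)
The plan is to reuse verbatim the scheme from the golden mean case. By Theorem~\ref{thmperiodic}, a point $z\notin\mathcal R$ is periodic if and only if $S^nR(z)\in\mathcal P$ for some $n\ge0$, and then Proposition~\ref{propperiodic} gives its minimal period as $\pi(z)=\tau(\sigma^n(\ell_1\cdots\ell_p))$, where $p=\hat\pi(S^nR(z))$ and $\ell_1\cdots\ell_p=\iota_0(S^nR(z))\cdots\iota_{p-1}(S^nR(z))$ is the $\hat T$-itinerary of $S^nR(z)$. The same theorem yields $\pi(z)=\infty$ precisely when $S^nR(z)\in\mathcal D\setminus\mathcal P$ for all $n$, which is the last row. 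Since $\tau(0)=1$, $\tau(1)=4$ and $\tau$ is additive on concatenations, the whole table reduces to the values $\tau(\sigma^n(\ell))$ together with a description of the finitely many $\hat T$-orbits inside $\mathcal P$.

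First I would compute the two atomic quantities $p_n:=\tau(\sigma^n(0))$ and $q_n:=\tau(\sigma^n(1))$. From $\sigma(0)=010$ and $\sigma(1)=01110$ and the additivity of $\tau$ one reads off the recursions $p_n=2p_{n-1}+q_{n-1}$ and $q_n=2p_{n-1}+3q_{n-1}$ with $p_0=1$, $q_0=4$. The transition matrix $\bigl(\begin{smallmatrix}2 & 1\\2 & 3\end{smallmatrix}\bigr)$ has eigenvalues $1$ and $4$, giving the two invariants $2p_n-q_n=-2$ and $p_n+q_n=5\cdot4^n$; solving these yields $\tau(\sigma^n(0))=(5\cdot4^n-2)/3$ and $\tau(\sigma^n(1))=2(5\cdot4^n+1)/3$, exactly the two basic period lengths in the statement.

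The geometric core is the description of $\hat T$ on $\mathcal P=D_\alpha\cup D_\beta$. Reading off Figures~\ref{figR2} and~\ref{figP2}, I would establish that $D_\alpha=U(D_A)$ lies entirely in $D_1$ and $D_\beta=U(D_B)$ entirely in $D_0$, that both pentagons are $\hat T$-invariant, and that $\hat T^5=\mathrm{id}$ on each with a single fixed point at its center. The centers are the $U$-images of the $T$-fixed centers of $D_A$, $D_B$, namely $\bigl(\tfrac{\gamma^2}{\gamma^2+1},\tfrac{\gamma^2}{\gamma^2+1}\bigr)\in D_1$ and $\bigl(\tfrac{3}{\gamma^2+1},\tfrac{3}{\gamma^2+1}\bigr)\in D_0$. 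Because each pentagon sits inside a single $D_\ell$, the $\hat T$-itineraries over one period are the uniform words $1$ for the center of $D_\alpha$ (with $\hat\pi=1$) and $11111$ for every non-central point of $D_\alpha$ (with $\hat\pi=5$, the center being the unique fixed point), and likewise $0$ and $00000$ for $D_\beta$. Substituting these four itineraries into Proposition~\ref{propperiodic} and using the previous step gives the four finite nontrivial rows, via $\tau(\sigma^n(11111))=5\tau(\sigma^n(1))=10(5\cdot4^n+1)/3$ and $\tau(\sigma^n(00000))=5\tau(\sigma^n(0))=5(5\cdot4^n-2)/3$.

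Finally I would dispatch the points of $\mathcal R=\{(0,0)\}\cup D_A\cup D_B$ directly: here $T^5=\mathrm{id}$, the only $T$-fixed points are $(0,0)$ and the two pentagon centers $\bigl(\tfrac{1}{\gamma^2+1},\tfrac{1}{\gamma^2+1}\bigr)$ and $\bigl(\tfrac{2}{\gamma^2+1},\tfrac{2}{\gamma^2+1}\bigr)$ (period $1$), while every remaining point of $D_A\cup D_B$ has period exactly $5$; together with the aperiodic row this exhausts the table. I expect the decisive obstacle to be the third step: proving \emph{rigorously}, rather than by inspection of the pictures, that each pentagon is contained in one $D_\ell$ and that the first return map restricts to an order-$5$ rotation of it with a single interior fixed point. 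Concretely one must check that no $\hat T$-orbit inside $\mathcal P$ ever crosses the separating line $x+\gamma y=2$ between $D_0$ and $D_1$, so that the itineraries really are the constant words $11111$ and $00000$ rather than some mixed word of length $5$; this is the point where the self-inducing relation $U\hat T=\hat T^{|\sigma(\ell)|}U$ must be reconciled with the explicit affine branches of $T$.
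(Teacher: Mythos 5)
Your proposal is correct and follows essentially the same route as the paper's proof: invoke Theorem~\ref{thmperiodic} and Proposition~\ref{propperiodic}, compute $\tau(\sigma^n(0))=(5\cdot4^n-2)/3$ and $\tau(\sigma^n(1))=2(5\cdot4^n+1)/3$, and read off the itineraries $1$, $11111$, $0$, $00000$ for the center and non-central points of $D_\alpha\subset D_1$ and $D_\beta\subset D_0$ (the paper obtains the same $\tau$-values by diagonalizing the letter-count vectors rather than your invariants $2p_n-q_n=-2$, $p_n+q_n=5\cdot4^n$, a trivial difference). The geometric facts you flag as the "decisive obstacle" are exactly those the paper disposes of by inspection of Figures~\ref{figR2} and~\ref{figP2} in the text preceding the theorem, so your plan matches the published argument.
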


\begin{proof}
We easily calculate
\begin{gather*}
\begin{pmatrix}|\sigma^n(0)|_0\\|\sigma^n(0)|_1\end{pmatrix}=
4^n\begin{pmatrix}1/3\\1/3\end{pmatrix}+\begin{pmatrix}2/3\\-1/3\end{pmatrix},
\qquad \begin{pmatrix}|\sigma^n(1)|_0\\|\sigma^n(1)|_1\end{pmatrix}=
4^n\begin{pmatrix}2/3\\2/3\end{pmatrix}+\begin{pmatrix}-2/3\\1/3\end{pmatrix},
\end{gather*}
hence $\tau(\sigma^n(0))=\frac53 4^n-\frac23$,
$\tau(\sigma^n(1))=\frac{10}3 4^n+\frac23$.
If $S^nR(z)\in D_\alpha$, then $\pi(z)=\tau(\sigma^n(1))$ and
$\pi(z)=\tau(\sigma^n(11111))$ respectively; if $S^nR(z)\in D_\beta$, then
$\pi(z)=\tau(\sigma^n(0))$ and $\pi(z)=5\tau(\sigma^n(0))$ respectively.
\end{proof}

For $z\in U(\mathcal D)$, we have $\hat s(z)=s(z)=0$ and $t(z)=(0,0)$.
For the other $z\in\mathcal D\setminus\mathcal P$, we choose $\hat s(z)$ as 
follows and obtain the following $s(z),t(z)$:
\begin{align*}
z\in\hat T^2U(D_0)\cup\hat T^2U(D_1):\ & \hat s(z)=-2,\,s(z)=-5,\
t(z)=V(\hat T^{-2}(z))-V(z)=(-1/\gamma^2,0) \\
z\in\hat TU(D_1):\ & \hat s(z)=-1,\,s(z)=-1,\
t(z)=V(\hat T^{-1}(z))-V(z)A^{-1}=(1/\gamma,0) \\
z\in\hat T^4U(D_1):\ & \hat s(z)=1,\,s(z)=1,\ 
t(z)=V(\hat T(z))-V(z)A=(0,1/\gamma) \\ 
z\in\hat TU(D_0)\cup\hat T^3U(D_1):\ & \hat s(z)=2,\,s(z)=5,\
t(z)=V(\hat T^2(z))-V(z)=(0,-1/\gamma^2)
\end{align*}
Observe the symmetry between positive and negative $\hat s(z)$ which is due to
the symmetry of $T(x,y)$ and $T^{-1}(y,x)$ and the symmetry of $\mathcal D$.
With
$$
\{(1/\gamma,0)A^h:\,h\in\mathbb Z\}=
\{(1/\gamma,0),\,(0,-1/\gamma),\,(-1/\gamma,1),\,(1,-1),\,(-1,1/\gamma)\},
$$
we obtain 
$\delta\le\max\{\|(t(z)A^h)'\|_\infty:\,z\in\mathcal D\setminus\mathcal P,\,
h\in\mathbb Z\}/\gamma=(1/\gamma^2)'/\gamma=\gamma$, as in 
Section~\ref{sectgolden}.
The following theorem shows that aperiodic points with $t(z)=(-1/\gamma^2,0)$
exist, hence $\delta=\gamma$.

\begin{theorem}
$\pi(z)$ is finite for all $z\in(\mathbb Z[\gamma]\cap[0,1))^2$, but
$\pi\big(1-1/(3\gamma),1-2/(3\gamma)\big)=\infty$.
\end{theorem}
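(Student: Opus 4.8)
The plan is to handle the two assertions separately: the finiteness statement via the reduction already established for this section, and the single aperiodic point by producing an explicit periodic orbit of $S$.

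For finiteness I would argue by contradiction using Proposition~\ref{propaper} with $Q=1$ and $\delta=\gamma$. Suppose some $z\in(\mathbb Z[\gamma]\cap[0,1))^2$ had $\pi(z)=\infty$. If $z\in\mathcal R$ then $T^5(z)=z$, so $z$ would be periodic; hence $z\notin\mathcal R$, and Proposition~\ref{propaper} yields an aperiodic $\tilde z=(x,y)\in\mathbb Z[\gamma]^2\cap\mathcal D$ with $\|V(\tilde z)'\|_\infty\le\gamma$. Since $V(\tilde z)=(1,1)-\tilde z$, this inequality reads $x',y'\in[1-\gamma,1+\gamma]$. Writing $x=a+b\gamma$ with $a,b\in\mathbb Z$ and using $x+x'=2a+b$ together with $x-x'=b\sqrt5$, the two constraints $x\in[0,1)$ and $x'\in[1-\gamma,1+\gamma]$ force $b\in\{-1,0\}$ and hence $x\in\{0,2-\gamma\}=\{0,\gamma^{-2}\}$, and the same for $y$. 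But then $x+y\le2\gamma^{-2}=4-2\gamma<3-\gamma$, so $\tilde z\notin\mathcal D$, a contradiction. Thus every $z\in(\mathbb Z[\gamma]\cap[0,1))^2$ is periodic, which is exactly Conjecture~\ref{cj} for $\lambda=-1/\gamma$.

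For the aperiodic point set $z_0=(1-1/(3\gamma),1-2/(3\gamma))$, so that $V(z_0)=(1/(3\gamma),2/(3\gamma))$. A one-line computation gives $x+y=2-1/\gamma=3-\gamma$, so $z_0$ lies on the boundary of $\mathcal D$; hence $z_0\in\mathcal D\setminus\{(0,0)\}$, whence $z_0\notin\mathcal R$ and $R(z_0)=z_0$. By Theorem~\ref{thmperiodic} it then suffices to show $S^n(z_0)\in\mathcal D\setminus\mathcal P$ for all $n\ge0$. I would do this by computing the forward $S$-orbit of $z_0$ explicitly and exhibiting it as a finite cycle contained in $\mathcal D\setminus\mathcal P$, in the manner of the golden-mean model where $S^4(0,1/3)=(0,1/3)$. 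Concretely, with $w_n=V(S^n(z_0))$ the identity $S(z)=V^{-1}\big((V(z)A^{s(z)}+t(z))/\kappa\big)$ and $\kappa=1/\gamma^2$ give the recursion $w_{n+1}=\gamma^2(w_nA^{s_n}+t_n)$, where $(s_n,t_n)$ is read off from the region of $z_n=(1,1)-w_n$ via the table preceding the theorem; all $w_n$ remain in $\tfrac13\mathbb Z[\gamma]^2$. Iterating until the orbit returns to $z_0$ produces the cycle, and verifying that each of its points avoids $\mathcal P=D_\alpha\cup D_\beta$ finishes the argument.

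The main obstacle will be the bookkeeping in this last step: at each iterate one must correctly decide which of the eight pieces $\hat T^kU(D_\ell)$ tiling $\mathcal D\setminus\mathcal P$ contains $z_n$, since this choice fixes $(s_n,t_n)$, and that amounts to locating $z_n$ against the boundaries in Figure~\ref{figP2} (equivalently, running the $T$-dynamics to its nearest entry into $U(\mathcal D)$). What guarantees the search terminates is the conjugate contraction: the induced map on algebraic conjugates scales by $(\gamma^2)'=\gamma^{-2}<1$ while the $t_n$ contribute only the finitely many bounded vectors $t(z)A^h$, so $(w_n')$ stays bounded and the orbit, living in a fixed finite subset of $\tfrac13\mathbb Z[\gamma]^2$, is eventually periodic; one then only needs to check that the period closes up at $z_0$ itself and that $\mathcal P$ is never met. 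Finally, note that $z_0$ has denominator $3$, i.e. $z_0\notin\mathbb Z[\gamma]^2$, which is precisely why the existence of this aperiodic point does not contradict the finiteness statement proved above.
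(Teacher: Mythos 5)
Your approach is the paper's approach, and the first half of your proposal is complete and correct: the paper likewise combines Proposition~\ref{propaper} (with $\delta=\gamma$, established just before the theorem) with the observation that $V(\mathcal D)=\{(x,y):x>0,\,y>0,\,x+y\le1/\gamma\}$ contains no point of $\mathbb Z[\gamma]^2$ with conjugate norm at most $\gamma$. Your version of this step, working in the $z$-coordinates rather than the $V$-coordinates and pinning down $x,y\in\{0,2-\gamma\}$ before contradicting $x+y\ge3-\gamma$, is a correct and in fact more explicit rendering of the paper's laconic ``no such $x,y$ exist''; your preliminary reduction $z\notin\mathcal R$ via $T^5(z)=z$ on $\mathcal R$ is also sound.

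The gap is in the second half. The theorem asserts $\pi(z_0)=\infty$ for the specific point $z_0$ with $V(z_0)=\big(1/(3\gamma),2/(3\gamma)\big)$, and everything hinges on the outcome of the finite computation that you describe but never carry out. Your termination argument (conjugate contraction by $1/\gamma^2$, finitely many digits $t_kA^h$, orbit confined to a finite subset of $\frac13\mathbb Z[\gamma]^2$) shows only that the search halts; it is equally consistent with the orbit landing in $\mathcal P=D_\alpha\cup D_\beta$ after finitely many steps, in which case $S$ ceases to be defined there and $z_0$ is \emph{periodic} by Theorem~\ref{thmperiodic} --- which is precisely what happens for many other denominator-$3$ points. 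So ``iterating until the orbit returns to $z_0$'' presumes the conclusion rather than proving it: a priori the orbit could also be eventually periodic with a nontrivial pre-period, and ruling both alternatives out requires locating each iterate among the eight pieces $\hat T^kU(D_\ell)$ to fix $(s_n,t_n)$ and then closing the cycle. The paper settles this in four lines: $VS(z_0)=\gamma^2\big(V(z_0)A^5+(0,-1/\gamma^2)\big)=\big(\gamma/3,1/(3\gamma^3)\big)$, $VS^2(z_0)=\gamma^2\big(VS(z_0)A^{-5}+(-1/\gamma^2,0)\big)=\big(2/(3\gamma),1/(3\gamma)\big)$, $VS^3(z_0)=\gamma^2\big(VS^2(z_0)A^{-5}+(0,-1/\gamma^2)\big)=\big(1/(3\gamma^3),\gamma/3\big)$, and $VS^4(z_0)=V(z_0)$, exhibiting a genuine $4$-cycle in $\mathcal D\setminus\mathcal P$, after which Theorem~\ref{thperiods2} gives $\pi(z_0)=\infty$. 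Without exhibiting this cycle (or at least the list of visited regions and digits), your proposal reduces the claim to a decidable finite check but does not establish it.
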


\begin{proof}
By Proposition~\ref{propaper}, we have to show that all
$z\in\mathbb Z[\gamma]^2\cap\mathcal D$ with
$\|V(z)'\|_\infty\le\gamma$ are periodic.
Since $V(\mathcal D)=\{(x,y):x>0,y>0,x+y\le1/\gamma\}$, we have to consider 
$x,y\in\mathbb Z[\gamma]\cap(0,1/\gamma)$ with $|x'|,|y'|\le\gamma$.
No such $x,y$ exist, hence the conjecture is proved for $\lambda=-1/\gamma$.
Note that $\pi(z)$ is finite for all 
$z\in(\frac12\mathbb Z[\gamma]\cap[0,1))^2$ as well.
If $V(z)=\big(1/(3\gamma),2/(3\gamma)\big)$, then we have
\begin{align*}
VS(z) & = \gamma^2\big(V(z)A^5+(0,-1/\gamma^2)\big) =
\big(\gamma/3,1/(3\gamma^3)\big) \\
VS^2(z) & = \gamma^2\big(VS(z)A^{-5}+(-1/\gamma^2,0)\big) =
\big(2/(3\gamma),1/(3\gamma)\big) \\
VS^3(z) & = \gamma^2\big(VS^2(z)A^{-5}+(0,-1/\gamma^2)\big) =
\big(1/(3\gamma^3),\gamma/3)\big) \\
VS^4(z) & = \gamma^2\big(VS^3(z)A^5+(0,-1/\gamma^2)\big) =
\big(1/(3\gamma),2/(3\gamma)\big) = V(z),
\end{align*}
hence $S^n(z)\in\mathcal D\setminus\mathcal P$ for all $n\ge0$ and
$\pi(z)=\infty$ by Theorem~\ref{thperiods2}.
\end{proof}

\section{The case $\lambda=\sqrt2=-2\cos\frac{3\pi}4$}

Let $\lambda=\sqrt2$ ($\lambda'=-\sqrt2$) and set
\begin{gather*}
\mathcal D=\{(x,y)\in[0,1)^2:\sqrt2-2<x-\sqrt2y<0,\ 0<\sqrt2x-y<\sqrt2-2\}=
\bigcup\nolimits_{\ell\in\mathcal A=\{0,1,2,3\}}D_\ell, \\
D_0=\{(x,y)\in\mathcal D:x<\sqrt2-1\},\quad
D_1=\{(x,y)\in\mathcal D:x>\sqrt2-1,y\le\sqrt2-1\}, \\
D_2=\{(x,y)\in\mathcal D:x>\sqrt2-1,y>\sqrt2-1\},\quad
D_3=\{(x,y)\in\mathcal D:x=\sqrt2-1\}.
\end{gather*}
Figure~\ref{figR3} shows that $\hat T(z)=T^{\tau(\ell)}(z)$ if $z\in D_\ell$,
with $\tau(0)=5$, $\tau(1)=9$, $\tau(2)=3$, $\tau(3)=11$, and $\mathcal R=
\{(0,0)\}\cup\bigcup_{k=0}^3T^k(D_A)\cup\bigcup_{k=0}^5T^k(D_B)$ with
$D_A=\{(0,y):1-1/\sqrt2<y<1/\sqrt2\}$, $D_B=\{(0,1/\sqrt2)\}$.
If we set $U(z)=(\sqrt2-1)z$, $V(z)=z$, $\kappa=\sqrt2-1$, $\varepsilon=-1$, 
and
$$
\sigma:\ 0\mapsto 010\qquad 1\mapsto 000\qquad 2\mapsto 0\qquad
3\mapsto 030,
$$
then Figure~\ref{figP3} shows that $\sigma$ satisfies the conditions in
Section~\ref{sectgeneral}, and
$$
\mathcal P=\{(x,y)\in\mathcal D:x,y\ge\sqrt2-1\}=
D_\alpha\cup D_\beta\cup\hat T(D_\beta)\cup D_\zeta
$$
with $D_\alpha=D_2$, $D_\beta=\{(x,\sqrt2-1):\sqrt2-1<x<2-\sqrt2\}$ and
$D_\zeta=\{(\sqrt2-1,\sqrt2-1)\}$.
All points in $\mathcal P$ are periodic and $|\sigma^n(\ell)|\to\infty$ as
$n\to\infty$ for all $\ell\in\mathcal A$.
Therefore, all conditions of Proposition~\ref{propperiodic} and
Theorem~\ref{thmperiodic} are satisfied, and we obtain the following theorem.

\begin{figure}
\includegraphics{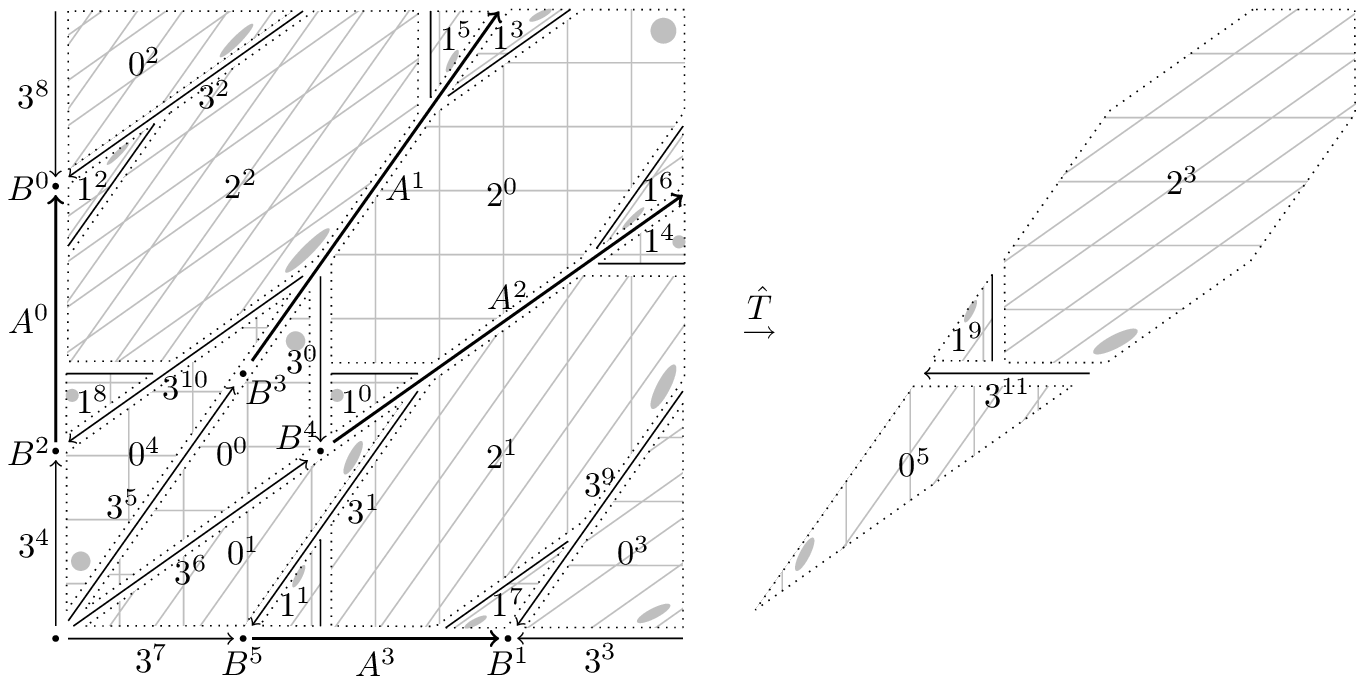}
\caption{The map $\hat T$ and the set $\mathcal R$,
$\lambda=\sqrt2$. ($\ell^k$ stands for $T^k(D_\ell)$.)} \label{figR3}
\end{figure}

\begin{figure}
\includegraphics{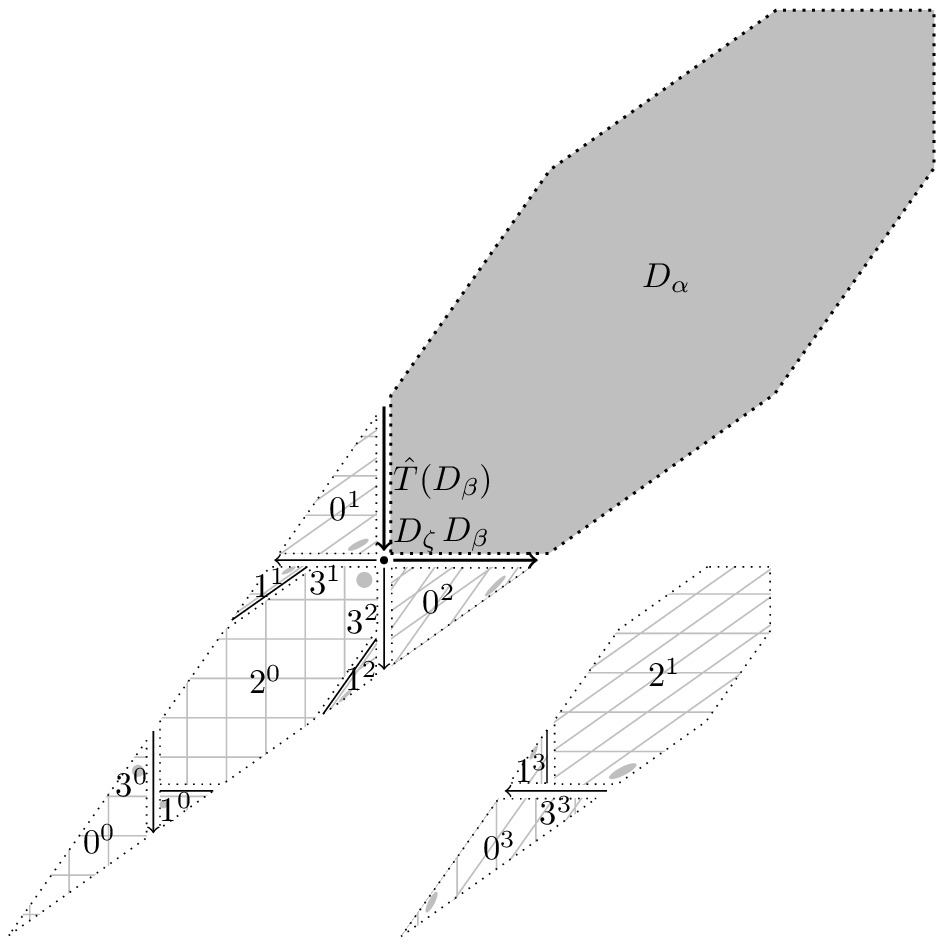}
\caption{The trajectory of the scaled domains and $\mathcal P$,
$\lambda=\sqrt2$. ($\ell^k$ stands for $\hat T^{-k}U(D_\ell)$.)} \label{figP3}
\end{figure}

\begin{theorem}\label{thperiods3}
If $\lambda=\sqrt2$, then the minimal period length $\pi(z)$ is \\
\centerline{\begin{tabular}{cl}
$1$ & if $z=(0,0)$ \\
$4 $& if $z=T^m(0,1/2)$, $0\le m\le 3$ \\
$8$ & for the other points of $T^m(D_A)$, $0\le m\le 3$ \\
$6$ & if $z=T^m(0,1/\sqrt2)$, $0\le m\le 5$ \\
$2\cdot3^n+(-1)^n$ & if $S^nR(z)=(1/\sqrt2,1/\sqrt2)$, $n\ge0$ \\
$8(2\cdot3^n+(-1)^n)$ & for the other points with $S^nR(z)\in D_\alpha$ \\
$4(3^{n+1}+1+(-1)^n)$ & if $S^nR(z)\in\{(1/2,\sqrt2-1),(\sqrt2-1,1/2)\}$, 
$n\ge0$ \\
$8(3^{n+1}+1+(-1)^n)$ & for the other points with
$S^nR(z)\in D_\beta\cup\hat T(D_\beta)$ \\
$2\cdot 3^{n+1}+4+(-1)^n$ & if $S^nR(z)=(\sqrt2-1,\sqrt2-1)$, $n\ge0$ \\
$\infty$ & if $S^nR(z)\in\mathcal D\setminus\mathcal P$ for all $n\ge0$.
\end{tabular}}
\end{theorem}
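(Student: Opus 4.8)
The plan is to imitate the proofs of Theorems~\ref{thperiods1} and~\ref{thperiods2}. Since all hypotheses of Proposition~\ref{propperiodic} and Theorem~\ref{thmperiodic} have just been verified for $\lambda=\sqrt2$, every $z\notin\mathcal R$ is periodic exactly when $S^nR(z)\in\mathcal P$ for some $n\ge0$, in which case $\pi(z)=\tau(\sigma^n(\ell_1\cdots\ell_p))$, where $p=\hat\pi(S^nR(z))$ and $\ell_1\cdots\ell_p$ is the $\hat T$-coding of the periodic orbit of $S^nR(z)$ inside $\mathcal P$. The last row of the table (value $\infty$) is then immediate from Theorem~\ref{thmperiodic}. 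Thus the proof splits into three tasks: (a) the orbits lying in $\mathcal R$, treated directly as in Section~\ref{sectgolden}; (b) determining the finite coding word $w$ of each $\hat T$-periodic orbit in $\mathcal P=D_\alpha\cup D_\beta\cup\hat T(D_\beta)\cup D_\zeta$; and (c) evaluating $\tau(\sigma^n(w))$ for those words.

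For (c) I would abelianise $\sigma$. Because $\sigma(2)=0$ destroys the letter $2$ and $\sigma(3)=030$ reproduces exactly one letter $3$, the incidence matrix of $\sigma$ is block-triangular: the $\{0,1\}$-block with rows $(2,3)$ and $(1,0)$ contributes the eigenvalues $3$ and $-1$, the letter $3$ contributes $1$, and the letter $2$ contributes $0$. A short induction on $\sigma(0)=010$, $\sigma(1)=000$, $\sigma(2)=0$, $\sigma(3)=030$, using $\tau(0)=5,\tau(1)=9,\tau(2)=3,\tau(3)=11$, then gives $\tau(\sigma^n(2))=2\cdot3^n+(-1)^n$, $\tau(\sigma^n(0))=2\cdot3^{n+1}-(-1)^n$, $\tau(\sigma^n(1))=2\cdot3^{n+1}+3(-1)^n$ and $\tau(\sigma^n(3))=2\cdot3^{n+1}+4+(-1)^n$. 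The eigenvalue $1$ coming from the letter $3$ is what produces the additive constants in the $D_\beta$- and $D_\zeta$-rows, whereas the $D_\alpha$-rows, built from the letter $2$ alone, carry none. Note that $\tau$ is an abelian invariant, so the $\varepsilon=-1$ alternation of Lemma~\ref{lemsubstitution} affects the word but not its $\tau$-value, which is why $(-1)^n$ appears throughout.

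For (b) I would read the first-return map $\hat T$ off Figure~\ref{figP3} on each connected piece of $\mathcal P$. I expect $\hat T$ to leave $D_\alpha=D_2$ invariant, with the centre $(1/\sqrt2,1/\sqrt2)$ a fixed point of coding $2$ and every other orbit of $\hat T$-period $8$ and coding $2^8$; this gives $\pi=\tau(\sigma^n(2))=2\cdot3^n+(-1)^n$ and $\pi=8\tau(\sigma^n(2))$ respectively. The two one-dimensional pieces $D_\beta\subseteq D_1$ and $\hat T(D_\beta)\subseteq D_3$ should be exchanged by $\hat T$, the distinguished $2$-cycle $\{(1/2,\sqrt2-1),(\sqrt2-1,1/2)\}$ having coding $13$, so that $\pi=\tau(\sigma^n(1))+\tau(\sigma^n(3))=4(3^{n+1}+1+(-1)^n)$, while the remaining points lie on $\hat T$-period-$4$ orbits of coding $1313$, giving twice that. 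Finally the corner $D_\zeta=\{(\sqrt2-1,\sqrt2-1)\}\subseteq D_3$ is a fixed point of coding $3$, whence $\pi=\tau(\sigma^n(3))=2\cdot3^{n+1}+4+(-1)^n$. Substituting the closed forms from (c) reproduces all the $\mathcal P$-rows. For (a), $(0,0)$ is fixed, the single point $D_B=\{(0,1/\sqrt2)\}$ lies on a plain $6$-cycle, and $T^4$ acts on the segment $D_A$ as an involution fixing $(0,1/2)$, which yields period $4$ there and $8$ elsewhere on $T^m(D_A)$.

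The main obstacle is step (b): pinning down the exact return-map combinatorics on $\mathcal P$. The $\tau$-values alone do not determine the codings — for instance several abelianised words $w$ satisfy $\tau(\sigma^n(w))=8\tau(\sigma^n(2))$ — so the length-$8$, length-$4$ and length-$1,2$ orbit structures, together with the identification of $(1/\sqrt2,1/\sqrt2)$, $(1/2,\sqrt2-1)$, $(\sqrt2-1,1/2)$ and $(\sqrt2-1,\sqrt2-1)$ as the fixed or short-period points, must be verified geometrically from the explicit action of $\hat T$ on the small polygonal and segment pieces of $\mathcal P$. This is precisely the patient bookkeeping on the figures that Proposition~\ref{propperiodic} leaves open; once the codings are fixed, step (c) is a routine linear-recurrence computation.
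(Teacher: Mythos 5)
Your proposal is correct and takes essentially the same route as the paper: the paper's proof likewise diagonalizes the abelianized substitution (eigenvalues $3$ and $-1$) to obtain $\tau(\sigma^n(0))=2\cdot3^{n+1}-(-1)^n$, $\tau(\sigma^n(2))=\tau(\sigma^{n-1}(0))$, $\tau(\sigma^n(3))=\tau(\sigma^{n-1}(030))=2\cdot3^{n+1}+4+(-1)^n$, and uses exactly your orbit codings read off the figures ($2$ for the center and $2^8$ for the other points of $D_\alpha$, $13$ for the distinguished $2$-cycle and its doubling $1313$ on $D_\beta\cup\hat T(D_\beta)$, $3$ for $D_\zeta$), with the points of $\mathcal R$ and the $n=0$ case checked directly. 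Your closed forms, including $\tau(\sigma^n(13))=4(3^{n+1}+1+(-1)^n)$ and the period-$4$/period-$8$ structure on $D_A$ via the involution $T^4(0,y)=(0,1-y)$, all agree with the paper's computation.
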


\begin{proof}
We easily calculate
\begin{gather*}
\begin{pmatrix}|\sigma^n(0)|_0\\|\sigma^n(0)|_1\end{pmatrix}=
3^n\begin{pmatrix}3/4\\1/4\end{pmatrix}+
(-1)^n\begin{pmatrix}1/4\\-1/4\end{pmatrix},\qquad
\begin{pmatrix}|\sigma^n(1)|_0\\|\sigma^n(1)|_1\end{pmatrix}=
3^n\begin{pmatrix}3/4\\1/4\end{pmatrix}+
(-1)^n\begin{pmatrix}-3/4\\3/4\end{pmatrix}
\end{gather*}
and obtain $\tau(\sigma^n(0))=2\cdot3^{n+1}-(-1)^n$,
$\tau(\sigma^n(3))=\tau(\sigma^{n-1}(030))=2\cdot3^{n+1}+4+(-1)^n$.
If $S^nR(z)\in D_\alpha$ and $n\ge1$, then
$\pi(z)=\tau(\sigma^n(2))=\tau(\sigma^{n-1}(0))$ and
$\pi(z)=8\tau(\sigma^{n-1}(0))$ respectively; if $S^nR(z)\in D_\beta$, then
$\pi(z)=\tau(\sigma^n(13))=\tau(\sigma^{n-1}(000030))$ and
$\pi(z)=2\tau(\sigma^{n-1}(000030))$ respectively; if
$S^nR(z)=(\sqrt2-1,\sqrt2-1)$, then $\pi(z)=\tau(\sigma^n(3))$.
The given $\pi(z)$ hold for $n=0$ as well.
\end{proof}

For $z\in\mathcal D\setminus(U(\mathcal D)\cup\mathcal P)$, we choose 
$\hat s(z)$ as follows and obtain the following $s(z),t(z)$:
\begin{align*}
z\in\hat T^{-2}U(D_0\cup D_1\cup D_3):\ & \hat s(z)=-1,\,s(z)=-5,\
t(z)=\hat T^{-1}(z)-zA^{-5}=(\sqrt2-1,2-\sqrt2) \\
z\in\hat T^{-1}U(D_0\cup D_1\cup D_3):\ & \hat s(z)=1,\,s(z)=5,\
t(z)=\hat T(z)-zA^5=(2-\sqrt2,\sqrt2-1)
\end{align*}
This gives $\delta=(2+\sqrt2)/\sqrt2=\sqrt2+1$ since
$$
\{t(z)A^h:z\in\mathcal D\setminus\mathcal P,h\in\mathbb Z\}=\pm\{(0,0),\
(2-\sqrt2,\sqrt2-1),(\sqrt2-1,0),(0,1-\sqrt2),(1-\sqrt2,\sqrt2-2)\}.
$$

\begin{theorem}
$\pi(z)$ is finite for all $z\in(\mathbb Z[\sqrt2]\cap[0,1))^2$, but
$(T^k(\frac{3-\sqrt2}4,\frac{2\sqrt2-1}4))_{k\in\mathbb Z}$ is aperiodic.
\end{theorem}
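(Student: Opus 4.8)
The plan is to treat the two claims with Proposition~\ref{propaper}, using that for $\lambda=\sqrt2$ we have $V(z)=z$, $\kappa=\sqrt2-1$, and, from the list of $(s(z),t(z))$ preceding the theorem, $\delta=\sqrt2+1$. For the finiteness claim I take $Q=1$: by Proposition~\ref{propaper} it suffices to show that every $z\in\mathbb Z[\sqrt2]^2\cap\mathcal D$ with $\|z'\|_\infty\le\sqrt2+1$ is periodic, since then no point of $(\mathbb Z[\sqrt2]\cap[0,1))^2$ can be aperiodic.

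First I would enumerate the admissible coordinates, as in the $-1/\gamma$ case. If $x=a+b\sqrt2\in[0,1)$ with $a,b\in\mathbb Z$, then $a=-\lfloor b\sqrt2\rfloor$, so $x=\{b\sqrt2\}$ and $x'=-(b\sqrt2+\lfloor b\sqrt2\rfloor)$; the requirement $|x'|\le\sqrt2+1$ forces $b\in\{0,1\}$, that is $x\in\{0,\sqrt2-1\}$, and likewise for $y$. Testing the four points $(x,y)$ with $x,y\in\{0,\sqrt2-1\}$ against the defining inequalities of $\mathcal D$ (for instance $x-\sqrt2y\in(\sqrt2-2,0)$) discards all of them except $(\sqrt2-1,\sqrt2-1)$, which is exactly the point $D_\zeta\subseteq\mathcal P$ and hence periodic. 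Thus no admissible point is aperiodic, and Conjecture~\ref{cj} follows for $\lambda=\sqrt2$.

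For the second claim I would compute the $S$-orbit of $z_0=(\frac{3-\sqrt2}4,\frac{2\sqrt2-1}4)$ explicitly from
$$
S(z)=(\sqrt2+1)\big(zA^{s(z)}+t(z)\big),\qquad
A=\begin{pmatrix}0&-1\\1&\sqrt2\end{pmatrix},\quad A^8=A^0,
$$
reading $s(z)$ and $t(z)$ off the region containing the current iterate. One checks first that $z_0\in D_0\subseteq\mathcal D\setminus\mathcal R$, so $R(z_0)=z_0$. Carrying out the four steps I expect the closed cycle
\begin{align*}
S(z_0) &= \Big(\tfrac{3\sqrt2-3}4,\tfrac{3-\sqrt2}4\Big)\in D_0, & S^2(z_0) &= \Big(\tfrac{2\sqrt2-1}4,\tfrac{3-\sqrt2}4\Big)\in D_1, \\
S^3(z_0) &= \Big(\tfrac{3-\sqrt2}4,\tfrac{3\sqrt2-3}4\Big)\in D_0, & S^4(z_0) &= z_0,
\end{align*}
with the successive values $s=5,5,-5,-5$. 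Since the four points $z_0,S(z_0),S^2(z_0),S^3(z_0)$ lie in $D_0,D_0,D_1,D_0$ respectively, none of them meets $\mathcal P=\{(x,y)\in\mathcal D:x,y\ge\sqrt2-1\}$. Hence $S^nR(z_0)\in\mathcal D\setminus\mathcal P$ for all $n\ge0$, and Theorem~\ref{thperiods3} gives $\pi(z_0)=\infty$.

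The main obstacle will be the bookkeeping in this last step: at each iterate one must decide whether it lies in $\hat T^{-1}U(D_0\cup D_1\cup D_3)$ (giving $s=5$) or in $\hat T^{-2}U(D_0\cup D_1\cup D_3)$ (giving $s=-5$), and an incorrect choice spuriously sends the point into $D_2\subseteq\mathcal P$. This region determination has to be read from Figures~\ref{figR3} and~\ref{figP3}; once the correct signs $5,5,-5,-5$ are fixed, the $\mathbb Z[\sqrt2]$-algebra closing the $4$-cycle is routine, and the enumeration in the first claim is comparatively straightforward.
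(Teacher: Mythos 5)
Your proposal is correct and follows the paper's own proof essentially verbatim: you invoke Proposition~\ref{propaper} with $\delta=\sqrt2+1$ to reduce the first claim to the single candidate $(\sqrt2-1,\sqrt2-1)=D_\zeta\subseteq\mathcal P$, and your four-step $S$-cycle for $\big(\frac{3-\sqrt2}4,\frac{2\sqrt2-1}4\big)$, with $s$-values $5,5,-5,-5$ and exactly the intermediate points $\big(\frac{3\sqrt2-3}4,\frac{3-\sqrt2}4\big)$, $\big(\frac{2\sqrt2-1}4,\frac{3-\sqrt2}4\big)$, $\big(\frac{3-\sqrt2}4,\frac{3\sqrt2-3}4\big)$, coincides with the paper's computation (and the region memberships $D_0,D_0,D_1,D_0$ you assert do check out against the defining inequalities of $\mathcal D$). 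The only difference is presentational: you spell out the enumeration forcing $x,y\in\{0,\sqrt2-1\}$, which the paper leaves implicit.
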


\begin{proof}
We have to consider $z\in\mathbb Z[\sqrt2]^2\cap\mathcal D$ with
$\|z'\|_\infty\le\delta=\sqrt2+1$.
The only such point is $(\sqrt2-1,\sqrt2-1)=D_\zeta$, hence
Conjecture~\ref{cj} holds for $\lambda=\sqrt2$.
It can be shown that all points in $(\frac12\mathbb Z[\sqrt2]\cap[0,1))^2$
and $(\frac13\mathbb Z[\sqrt2]\cap[0,1))^2$ are periodic as well.
For $z=(\frac{3-\sqrt2}4,\frac{2\sqrt2-1}4)$, we have
\begin{gather*}
S(z)=\big(zA^5+(2-\sqrt2,\sqrt2-1)\big)/\kappa=
(\sqrt2+1)\Big(\frac{9-6\sqrt2}4,\sqrt2-\frac54\Big)=
\Big(\frac{3\sqrt2-3}4,\frac{3-\sqrt2}4\Big), \\
S^2(z)=\big(S(z)A^5+(2-\sqrt2,\sqrt2-1)\big)/\kappa=
(\sqrt2+1)\Big(\frac{5-3\sqrt2}4,\sqrt2-\frac54\Big)=
\Big(\frac{2\sqrt2-1}4,\frac{3-\sqrt2}4\Big),
\end{gather*}
$S^3(z)=\big(S^2(z)A^{-5}+(\sqrt2-1,2-\sqrt2)\big)/\kappa=
(\frac{3-\sqrt2}4,\frac{3\sqrt2-3}4)$ and
$S^4(z)=(\frac{3-\sqrt2}4,\frac{2\sqrt2-1}4)=z$.
\end{proof}

\begin{figure}
\begin{minipage}{7.3cm}
\centerline{\includegraphics[scale=.8]{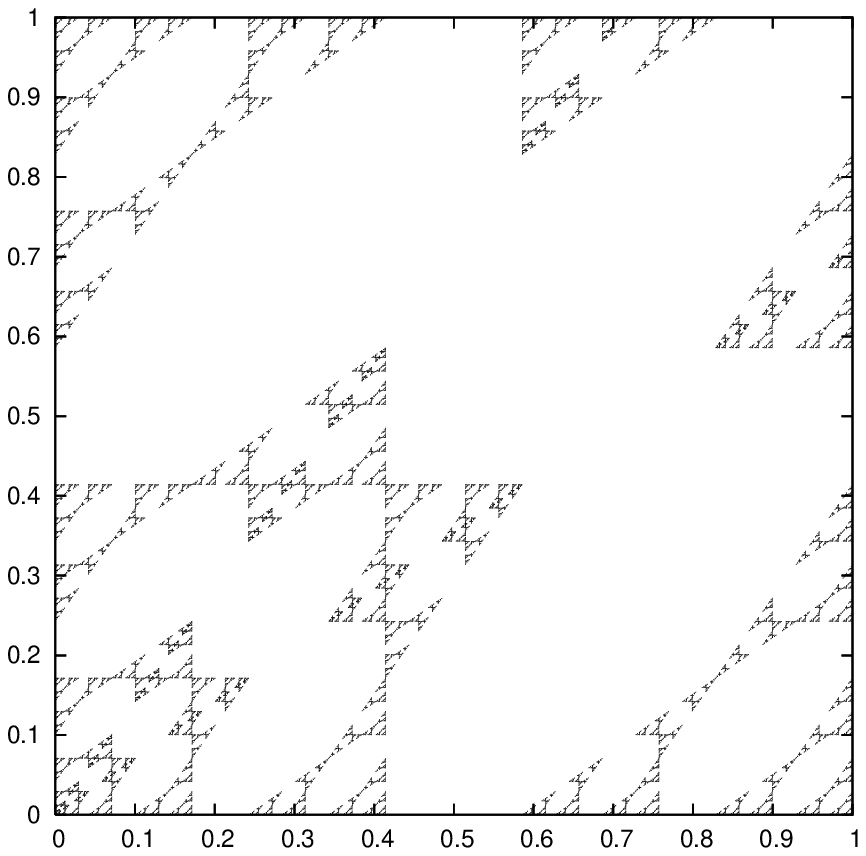}}
\caption{\mbox{Aperiodic points, $\lambda=\sqrt2$.}}\label{fig3a}
\end{minipage}
\begin{minipage}{7.3cm}
\centerline{\includegraphics[scale=.8]{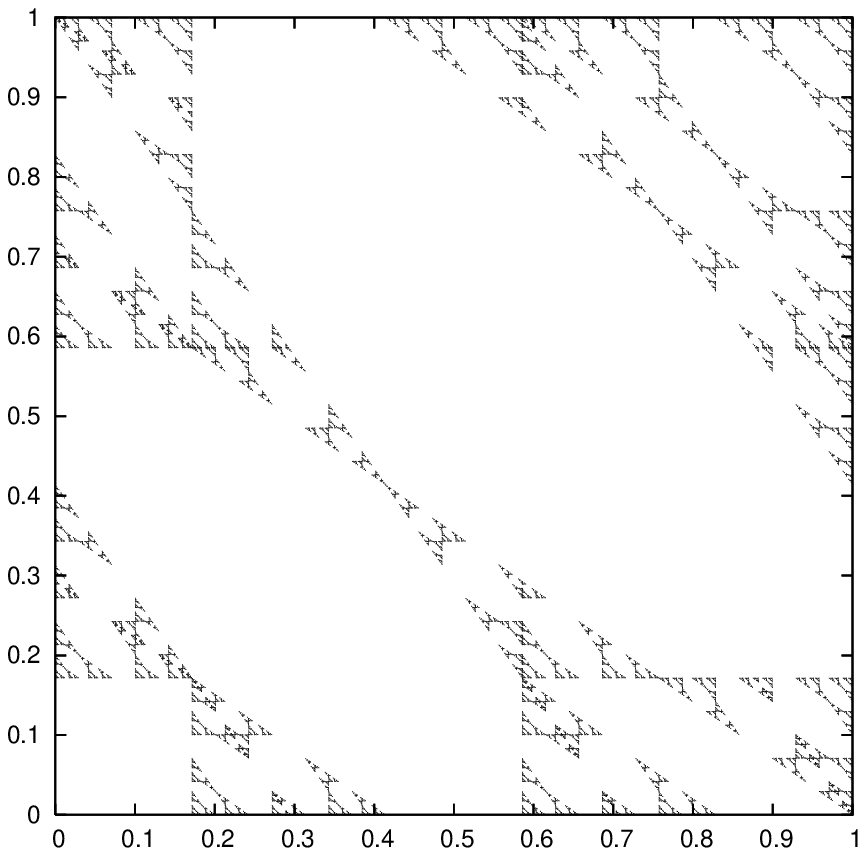}}
\caption{\mbox{Aperiodic points, $\lambda=-\sqrt2$.}}\label{fig4a}
\end{minipage}
\end{figure}

\section{The case $\lambda=-\sqrt2=-2\cos\frac\pi4$}

Let $\lambda=-\sqrt2$ ($\lambda'=\sqrt2$) and set
$$
\mathcal D=\{(x,y)\in[0,1)^2:\sqrt2x+y>2\mbox{ or }x+\sqrt2y>2\}=
\bigcup\nolimits_{\ell\in\mathcal A=\{0,1,2\}}D_\ell,
$$
with $D_0=\{(x,y)\in\mathcal D:x+\sqrt2y>2\}$ and
$D_1=\{(x,y)\in\mathcal D:x+\sqrt2y<2\}$.
Figure~\ref{figR4} shows that $\hat T(z)=T^{\tau(\ell)}(z)$ if $z\in D_\ell$,
with $\tau(0)=1$, $\tau(1)=21$, $\tau(2)=31$, and
\begin{gather*}
\mathcal R=\{(0,0)\}\cup D_A\cup D_B\cup\bigcup\nolimits_{k=0}^3 T^k(D_\Gamma)
\cup\bigcup\nolimits_{k=0}^9T^k(D_\Delta), \\
D_A=\{(x,y):0\le x,y\le3-2\sqrt2\}\setminus\{(0,0),(3-2\sqrt2,3-2\sqrt2)\}, \\
D_B=\{z\in[0,1)^2:T^{k+1}(z)=T^k(z)A+(0,1)\mbox{ for all }k\in\mathbb Z\}, \\
D_\Gamma=\{z\in[0,1)^2:T^{k+1}(z)=T^k(z)A+(0,2)\mbox{ for all }k\in\mathbb Z\},
\end{gather*}
$D_\Delta=\{(1/\sqrt2,0)\}$.
Set $\kappa=\sqrt2-1$, $V(z)=((1,1)-z)/\kappa=(\sqrt2+1)((1,1)-z)$, i.e.,
$$
U(z)=(1,1)-(\sqrt2-1)\big((1,1)-z\big)=(\sqrt2-1)z+(2-\sqrt2,2-\sqrt2).
$$
Then Figure~\ref{figP4} shows that the conditions in Section~\ref{sectgeneral}
are satisfied by
$$
\sigma:\ 0\mapsto 010 \qquad 1\mapsto 000\qquad 2\mapsto 020
$$
with $\varepsilon=-1$ and $\mathcal P=D_\alpha\cup
\bigcup_{k=0}^5\hat T^k(D_\beta)\cup\bigcup_{k=0}^2\hat T^k(D_\zeta)$ with 
$$
D_\alpha=\{z\in[0,1)^2:T^{k+1}(z)=T^k(z)A+(0,3)\mbox{ for all }
k\in\mathbb Z\},
$$ 
$D_\beta=\{(x,2-\sqrt2x):5-3\sqrt2<x<2\sqrt2-2\}$ and 
$D_\zeta=\{(8-5\sqrt2,8-5\sqrt2)\}$.
All points in $\mathcal P$ are periodic and $|\sigma^n(\ell)|\to\infty$ as
$n\to\infty$ for all $\ell\in\mathcal A$.
Therefore, all conditions of Proposition~\ref{propperiodic} and
Theorem~\ref{thmperiodic} are satisfied, and we obtain the following theorem.

\begin{figure}
\includegraphics[scale=.99]{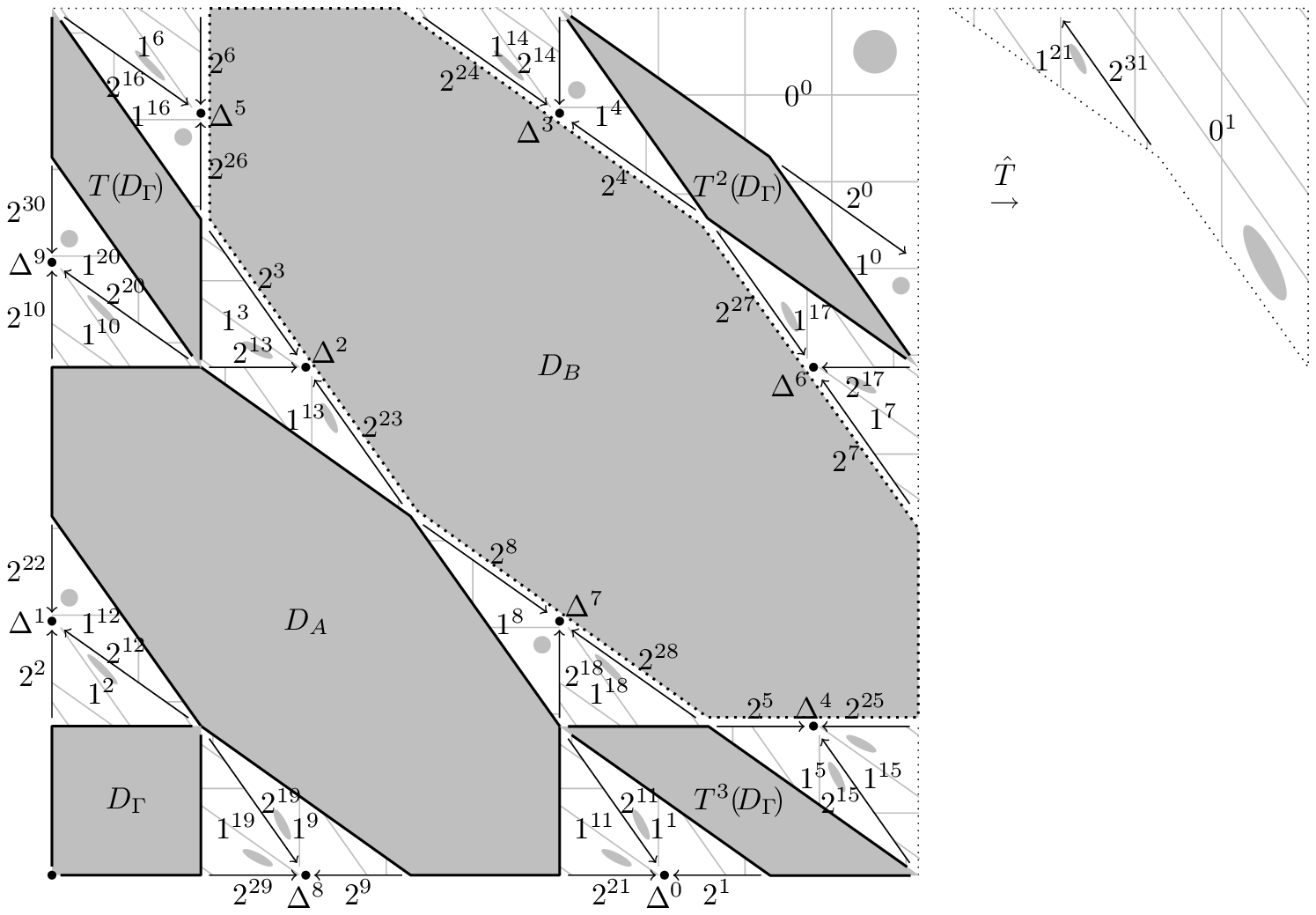}
\caption{The map $\hat T$ and the set $\mathcal R$, $\lambda=-\sqrt2$.
($\ell^k$ stands for $T^k(D_\ell)$.)} \label{figR4}
\end{figure}

\begin{figure}
\includegraphics{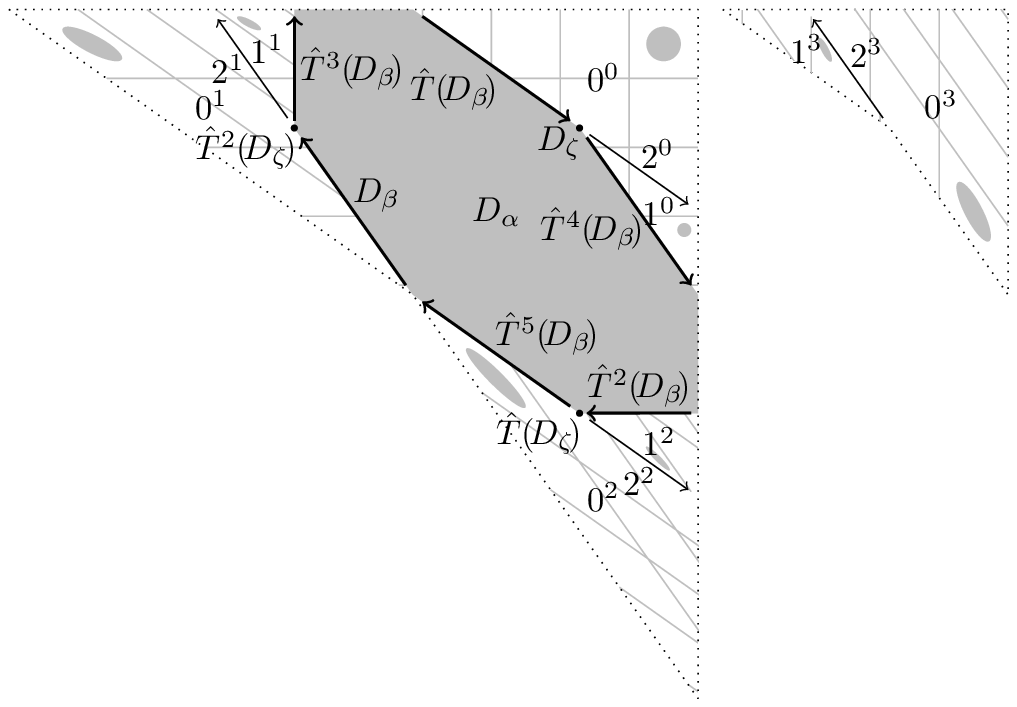}
\caption{\mbox{The trajectory of the scaled domains and $\mathcal P$,
$\lambda=-\sqrt2$. ($\ell^k$ stands for $\hat T^{-k}U(D_\ell)$.)}} \label{figP4}
\end{figure}

\begin{theorem}\label{thperiods4}
If $\lambda=-\sqrt2$, then the minimal period length $\pi(z)$ is \\
\centerline{\begin{tabular}{cl}
$1$ & if $z\in\{(0,0),(1/\sqrt2,1/\sqrt2),(2-\sqrt2,2-\sqrt2)\}$ \\
$4$ & if $z=T^m(3/2-\sqrt2,3/2-\sqrt2)$ for some $m\in\{0,1,2,3\}$  \\
$10$ & if $z=T^m(1/\sqrt2,0)$ for some $m\in\{0,1,\ldots,9\}$ \\
$8$ & for the other points in $\mathcal R$ \\
$2\cdot3^{n+1}-5(-1)^n$&if $S^nR(z)=(3-3/\sqrt2,3-3/\sqrt2)$ for some $n\ge0$\\
$8(2\cdot3^{n+1}-5(-1)^n)$&for the other points with $S^nR(z)\in D_\alpha$\\
$4(3^{n+2}+5-5(-1)^n)$ & if $S^nR(z)=\hat T^m\big((9-5\sqrt2)/2,5-3\sqrt2\big)$
for some $m\in\{0,\ldots,5\}$, $n\ge0$ \\
$8(3^{n+2}+5-5(-1)^n)$ & for the other points with $S^nR(z)\in\hat T^m(D_\beta)$
\\ $2\cdot 3^{n+2}+20-5(-1)^n$ & if $S^nR(z)=\hat T^m(8-5\sqrt2,8-5\sqrt2)$ for
some $m\in\{0,1,2\}$, $n\ge0$ \\
$\infty$ & if $S^nR(z)\in\mathcal D\setminus\mathcal P$ for all $n\ge0$.
\end{tabular}}
\end{theorem}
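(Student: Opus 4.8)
The plan is to follow verbatim the scheme already carried out for Theorems~\ref{thperiods2} and~\ref{thperiods3}. Once the letter statistics of $\sigma$ are understood, Proposition~\ref{propperiodic} turns every period length into an evaluation of $\tau\big(\sigma^n(\ell_1\cdots\ell_p)\big)$, where $\ell_1\cdots\ell_p$ is the $\hat T$-coding of the cell of $\mathcal P$ hit by $S^nR(z)$ and $p=\hat\pi(S^nR(z))$; the final ($\infty$) row is then immediate from Theorem~\ref{thmperiodic}. I would therefore begin by recording the incidence matrix of $\sigma\colon 0\mapsto010,\ 1\mapsto000,\ 2\mapsto020$ on the alphabet $\{0,1,2\}$,
\[
M=\begin{pmatrix}2&3&2\\ 1&0&0\\ 0&0&1\end{pmatrix},
\]
whose characteristic polynomial factors as $(1-x)(x-3)(x+1)$. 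The eigenvalues $3,1,-1$ (with eigenvectors $(3,1,0)$, $(1,1,-2)$, $(1,-1,0)$) are exactly the source of the $3^n$ and $(-1)^n$ in the statement. Expanding $e_0,e_1,e_2$ in this eigenbasis gives closed forms for the counts $|\sigma^n(\ell)|_m$, and since $\tau(0)=1$, $\tau(1)=21$, $\tau(2)=31$ these assemble into
\[
\tau(\sigma^n(0))=2\cdot3^{n+1}-5(-1)^n,\qquad
\tau(\sigma^n(1))=2\cdot3^{n+1}+15(-1)^n,\qquad
\tau(\sigma^n(2))=2\cdot3^{n+1}+20+5(-1)^n.
\]

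Next I would match each cell of $\mathcal P$ with its $\hat T$-coding word, reading the combinatorics off Figure~\ref{figP4} and the defining affine relations. Since $\lambda'=\sqrt2$ gives $A^8=A^0$, the affine rotation $z\mapsto zA+(0,3)$ on $D_\alpha$ has order $8$: its fixed point is coded by the single letter $0$ and gives $\pi=\tau(\sigma^n(0))$, whereas every other point of $D_\alpha$ has $\hat T$-period $8$ with coding $0^8$, giving $\pi=8\,\tau(\sigma^n(0))$. The point $D_\zeta$ has a $\hat T$-orbit of length $3$ coded cyclically by $020$, so Proposition~\ref{propperiodic} yields $\pi=\tau(\sigma^n(020))=2\cdot3^{n+2}+20-5(-1)^n$. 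For the segment $D_\beta$ the orbit $\{\hat T^k(D_\beta)\}_{k=0}^{5}$ has length $6$; its distinguished point is the fixed point of the involution $\hat T^{6}|_{D_\beta}$, is coded by the length-$6$ word $0^5 2$, and hence has $\pi=\tau(\sigma^n(0^5 2))=4(3^{n+2}+5-5(-1)^n)$, while a generic point of $D_\beta$ has twice that $\hat T$-period and thus $\pi=8(3^{n+2}+5-5(-1)^n)$. In each case the tabulated value follows by substituting the three $\tau$-formulas above.

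It remains to treat $\mathcal R$ directly, as in Section~\ref{sectgolden}, since $S$ and $R$ do not act there. The genuine fixed point $(0,0)$ together with the centres of the rotation cells $D_B$ (translation $(0,1)$) and $D_\Gamma$ (translation $(0,2)$) account for the three period-one points. Because $A$ has order $8$, generic points of these cells return after $8$ steps, giving the period-$8$ entry. The two exceptional orbits are obtained by solving closure relations: $T^4(z)=z$, which by $A^4=-A^0$ collapses (via the translation vector accumulated along the orbit) to a single linear equation and yields the period-$4$ point $(3/2-\sqrt2,3/2-\sqrt2)$ lying in the small square $D_A$, the remaining points of $D_A$ again having period $8$; and the period-$10$ orbit of $D_\Delta=\{(1/\sqrt2,0)\}$, found by writing out its ten iterates explicitly.

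The step I expect to be the main obstacle is the second one: correctly extracting the $\hat T$-coding words and $\hat T$-periods of the distinguished and generic points in each cell, and in particular pinning down the multiplicative factors ($1$ versus $8$ on $D_\alpha$, $4$ versus $8$ on $D_\beta$). This is where the geometry is genuinely needed---one must verify from the figure that the rotation orbit on $D_\alpha$ stays inside $D_0$ (coding $0^8$), that $\hat T^6$ acts on the segment $D_\beta$ as an involution with the stated fixed point, and that the length-$3$ orbit of $D_\zeta$ visits $D_0,D_0,D_2$. The $\tau$-identities only confirm the answer once the correct words are in hand; the supporting arithmetic (the closed forms for $|\sigma^n(\ell)|_m$ and the final substitutions) is then routine.
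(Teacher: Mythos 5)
Your proposal is correct and follows essentially the same route as the paper: the paper's proof likewise computes the letter statistics $|\sigma^n(\ell)|_m$ (the $3^n$ and $(-1)^n$ coming from the same eigenvalues), derives $\tau(\sigma^n(0))=2\cdot3^{n+1}-5(-1)^n$ and $\tau(\sigma^n(2))=2\cdot3^{n+1}+20+5(-1)^n$, and then applies Proposition~\ref{propperiodic} with the coding words $0$ resp.\ $0^8$ on $D_\alpha$, $002000$ (your $0^52$ is a cyclic rotation, immaterial since $\tau$ depends only on letter counts) resp.\ its square on the $D_\beta$ orbit, and $020$ on the $D_\zeta$ orbit, while the finitely many orbits in $\mathcal R$ are read off from the explicit description of $D_A,D_B,D_\Gamma,D_\Delta$ exactly as you do. Your explicit eigenbasis computation and the closure-relation treatment of $\mathcal R$ (using $A^4=-A^0$ for the period-$4$ orbit) only spell out what the paper leaves to the section text and Figure~\ref{figP4}.
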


\begin{proof}
As for $\lambda=\sqrt2$, we have
\begin{gather*}
\begin{pmatrix}|\sigma^n(0)|_0\\|\sigma^n(0)|_1\end{pmatrix}=
3^n\begin{pmatrix}3/4\\1/4\end{pmatrix}+
(-1)^n\begin{pmatrix}1/4\\-1/4\end{pmatrix},\qquad
\begin{pmatrix}|\sigma^n(1)|_0\\|\sigma^n(1)|_1\end{pmatrix}=
3^n\begin{pmatrix}3/4\\1/4\end{pmatrix}+
(-1)^n\begin{pmatrix}-3/4\\3/4\end{pmatrix},
\end{gather*}
hence $\tau(\sigma^n(0))=2\cdot3^{n+1}-5(-1)^n$ and
$\tau(\sigma^n(2))=\tau(\sigma^{n-1}(020))=2\cdot3^{n+1}+20+5(-1)^n$.
For $S^nR(z)\in D_\alpha$, we have $\pi(z)=\tau(\sigma^n(0))$ and
$\pi(z)=8\tau(\sigma^n(0))$ respectively;
if $S^nR(z)\in T^m(D_\beta)$, then $\pi(z)=\tau(\sigma^n(002000))$
and $\pi(z)=2\tau(\sigma^n(002000))$ respectively;
if $S^nR(z)=\hat T^m(D_\zeta)$, then $\pi(z)=\tau(\sigma^n(020))$.
\end{proof}

For $z\in\mathcal D\setminus(U(\mathcal D)\cup\mathcal P)$, we choose 
$\hat s(z)$ as follows and obtain the following $s(z),t(z)$:
\begin{align*}
z\in\hat T^{-2}U(D_0\cup D_1\cup D_2):\ & \hat s(z)=-1,\,s(z)=-1,\
t(z)=V(\hat T^{-1}(z))-V(z)A^{-1}=(1,0) \\
z\in\hat T^{-1}U(D_0\cup D_1\cup D_2):\ & \hat s(z)=1,\,s(z)=1,\
t(z)=V(\hat T(z))-V(z)A=(0,1)
\end{align*}
This gives $\delta=\sqrt2/\sqrt2=1$ since
$$
\{t(z)A^h:z\in\mathcal D\setminus\mathcal P,h\in\mathbb Z\}=
\pm\{(0,0),\ (1,0),(0,1),(1,-\sqrt2),(-\sqrt2,1)\}.
$$

\begin{theorem}
$\pi(z)$ is finite for all $z\in(\mathbb Z[\sqrt2]\cap[0,1))^2$, but
$(T^k(\frac34,\frac{5-\sqrt2}4))_{k\in\mathbb Z}$ is aperiodic.
\end{theorem}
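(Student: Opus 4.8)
The plan is to follow the two-part template already used for $\lambda=\gamma$ and $\lambda=\sqrt2$: first invoke Proposition~\ref{propaper} to reduce the $\mathbb Z[\sqrt2]$-statement to a finite check, and then exhibit the single aperiodic orbit by iterating $S$ explicitly. If some point of $(\mathbb Z[\sqrt2]\cap[0,1))^2$ were aperiodic, it would not lie in $\mathcal R$ (all points of $\mathcal P\cup\mathcal R$ are periodic in this case), so Proposition~\ref{propaper} with $Q=1$ and the value $\delta=1$ computed before the theorem applies.

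For the first assertion I would enumerate the candidate points. The proposition yields an aperiodic $\tilde z\in\mathbb Z[\sqrt2]^2\cap\mathcal D$ with $\|V(\tilde z)'\|_\infty\le1$. Since $V(z)=(\sqrt2+1)((1,1)-z)$, taking conjugates gives $V(z)'=(1-\sqrt2)((1,1)-z')$, so the bound reads $\max(|1-x'|,|1-y'|)\le\sqrt2+1$. Writing $x=a+b\sqrt2$ with $a,b\in\mathbb Z$ and combining $x\in[0,1)$ with $-\sqrt2\le x'\le2+\sqrt2$ forces $a\in\{0,1,2\}$ and $b\in\{-1,0\}$, leaving only $x\in\{0,\,2-\sqrt2\}$ (and likewise for $y$). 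Hence $\tilde z\in\{0,\,2-\sqrt2\}^2$. Checking each of the four points against $\sqrt2x+y>2$ or $x+\sqrt2y>2$ shows that none lies in $\mathcal D$ (e.g. for $(2-\sqrt2,2-\sqrt2)$ both sums equal $\sqrt2<2$). So no admissible $\tilde z$ exists, and every point of $(\mathbb Z[\sqrt2]\cap[0,1))^2$ is periodic, proving Conjecture~\ref{cj} for $\lambda=-\sqrt2$.

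For the second assertion I would set $z=(\frac34,\frac{5-\sqrt2}4)$ and first verify $z\in\mathcal D$: indeed $x+\sqrt2y=\frac{1+5\sqrt2}4>2$, so $z\in D_0$ and $R(z)=z$. Then I compute the $S$-orbit through the recursion $VS(w)=(\sqrt2+1)\big(V(w)A^{s}+t\big)$ with the dictionary $(s,t)\in\{(1,(0,1)),\,(-1,(1,0))\}$ recorded before the theorem, starting from $V(z)=(\frac{\sqrt2+1}4,\frac14)$. Iterating and selecting at each step the branch whose image lands back in $\mathcal D$, one obtains a closed $4$-cycle $S^4(z)=z$ passing through the three further points $(\frac34,\frac{1+2\sqrt2}4)$, $(\frac{5-\sqrt2}4,\frac34)$ and $(\frac{1+2\sqrt2}4,\frac34)$, all lying in $\mathcal D\setminus\mathcal P$ (with branch pattern $s=1,1,-1,-1$). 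Since $S$ is thus defined for every $n\ge0$ and the orbit never enters $\mathcal P$, the last line of Theorem~\ref{thperiods4} gives $\pi(z)=\infty$.

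The main obstacle is the bookkeeping in this last computation: at each step one must decide which branch is correct, that is, whether the relevant iterate sits in $\hat T^{-1}U(\mathcal D)$ or in $\hat T^{-2}U(\mathcal D)$, which is exactly the information encoded in Figure~\ref{figP4}. I would circumvent the geometric case analysis by using the criterion that exactly one branch returns a point of $\mathcal D$: for three of the four points the wrong branch produces a coordinate outside $[0,1)$, while at the second point $(\frac34,\frac{1+2\sqrt2}4)$ both branches land in the unit square but only one satisfies a defining inequality of $\mathcal D$, so membership in $\mathcal D$ (not merely in $[0,1)^2$) is the genuinely delicate check. Since a valid branch is found at each step, the relevant $\hat T$-iterate meets $U(\mathcal D)$, which is incompatible with membership in $\mathcal P$; this confirms that the whole cycle lies in $\mathcal D\setminus\mathcal P$ and closes the argument.
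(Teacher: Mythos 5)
Your proposal is correct and takes essentially the same route as the paper: Proposition~\ref{propaper} with $\delta=1$ reduces the first claim to an (empty) finite check — your enumeration $\tilde z\in\{0,2-\sqrt2\}^2$ is just the coordinate-wise version of the paper's remark that no point of $V(\mathcal D)$ in $\mathbb Z[\sqrt2]^2$ has conjugate norm $\le1$ — and your $4$-cycle with branch pattern $s=1,1,-1,-1$ reproduces the paper's computation $VS(z)=(\sqrt2+1)(V(z)A+(0,1))$, \dots, $VS^4(z)=V(z)$ exactly, while your observation that a successful branch certifies $\hat T^{\pm1}(w)\in U(\mathcal D)$ (hence $w\notin\mathcal P$) is a legitimate replacement for reading the regions off Figure~\ref{figP4}, since the branch formula literally computes $U^{-1}T^{\pm1}(w)$ once the relevant inequality $x+\sqrt2y>2$ resp.\ $\sqrt2x+y>2$ is checked at $w$. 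Two cosmetic slips in your bookkeeping: by the $x\leftrightarrow y$ symmetry of the cycle, only two (not three) wrong branches exit $[0,1)^2$ — at $(\frac{1+2\sqrt2}4,\frac34)$ the wrong branch lands at $(\frac{3-\sqrt2}4,\frac{1+2\sqrt2}4)\in[0,1)^2$ and must also be excluded via the defining inequalities of $\mathcal D$ — and your two-entry dictionary omits the case $w\in U(\mathcal D)$ (where $\hat s(z)=0$, $t(z)=(0,0)$ is forced), so for full rigor you should also verify $(\sqrt2+1)V(w)\notin V(\mathcal D)$ at each of the four points, which indeed holds.
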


\begin{proof}
Since $V(\mathcal D)=\{(x,y):x>0,y>0,x+\sqrt2y<1\mbox{ or }\sqrt2x+y<1\}$,
there exists no $z\in\mathbb Z[\sqrt2]^2\cap\mathcal D$ with
$\|(V(z))'\|_\infty\le1$.
Therefore Conjecture~\ref{cj} holds for $\lambda=-\sqrt2$.
It can be shown that all points in $(\frac12\mathbb Z[\sqrt2]\cap[0,1))^2$ and
$(\frac13\mathbb Z[\sqrt2]\cap[0,1))^2$ are periodic as well.
For $z=(\frac34,\frac{5-\sqrt2}4)$, we have $V(z)=(\frac{\sqrt2+1}4,\frac14)$,
$$
VS(z)=(\sqrt2+1)(V(z)A+(0,1))=(\sqrt2+1)\Big(\frac14,\frac{3-2\sqrt2}4\Big)=
\Big(\frac{\sqrt2+1}4,\frac{\sqrt2-1}4\Big),
$$
$VS^2(z)=(\frac14,\frac{\sqrt2+1}4)$,
$VS^3(z)=(\frac{\sqrt2-1}4,\frac{\sqrt2+1}4)$ and
$VS^4(z)=(\frac{\sqrt2+1}4,\frac14)=V(z)$.
\end{proof}

\section{The case $\lambda=1/\gamma=-2\cos\frac{3\pi}5$}

Let $\lambda=1/\gamma$ ($\lambda'=-\gamma$) and set
$$
\mathcal D=\{(x,y)\in[0,1)^2:\gamma x-1<y<x/\gamma\}=
\bigcup\nolimits_{\ell\in\mathcal A=\{0,1,2,3\}}D_\ell,
$$
with $D_0,D_1,D_2,D_3$ satisfying the (in)equalities
$$
\begin{array}{c|c|c|c}
D_0 & D_1 & D_2 & D_3 \\ \hline
y>x-1/\gamma^2 & 0<y<x-1/\gamma^2 & y=x-1/\gamma^2 & y=0,\,1/\gamma^2<x<1/\gamma
\end{array} 
$$
Figure~\ref{figR5} shows that $\hat T(z)=T^{\tau(\ell)}(z)$ if $z\in D_\ell$,
with $\tau(0)=6$, $\tau(1)=4$, $\tau(2)=7$, $\tau(3)=5$, and
$\mathcal R=\{(0,0)\}$.
If we set $U(z)=z/\gamma^2$, $V(z)=z$, $\kappa=1/\gamma^2$, $\varepsilon=1$, 
and
$$
\sigma:\ 0\mapsto010\qquad 1\mapsto01110\qquad 2\mapsto012\qquad 3\mapsto01112
$$
then Figure~\ref{figP5} shows that $\sigma$ satisfies the conditions in
Section~\ref{sectgeneral}, and
$$
\mathcal P=D_\alpha\cup D_\beta\cup\bigcup\nolimits_{k=0}^3\hat T^k(D_\zeta)
\cup D_\vartheta\cup\bigcup\nolimits_{k=0}^1\hat T^k(D_\eta)\cup D_\mu
$$
with
$D_\alpha=\{z\in\mathcal D:\hat T^k(z)\in D_0\mbox{ for all }k\in\mathbb Z\}$,
$D_\beta=\{z\in\mathcal D:\hat T^k(z)\in D_1\mbox{ for all }k\in\mathbb Z\}$,
$D_\zeta=\{(x,0):1/\gamma^3<x<1/\gamma^2\}$, $D_\eta=D_3$,
$D_\vartheta=\{(1/\gamma^3,0)\}$ and $D_\mu=\{(1/\gamma^2,0)\}$.
All points in $\mathcal P$ are periodic and $|\sigma^n(\ell)|\to\infty$ as
$n\to\infty$ for all $\ell\in\mathcal A$.
Therefore, all conditions of Proposition~\ref{propperiodic} and
Theorem~\ref{thmperiodic} are satisfied, and we obtain the following theorem.

\begin{figure}
\includegraphics{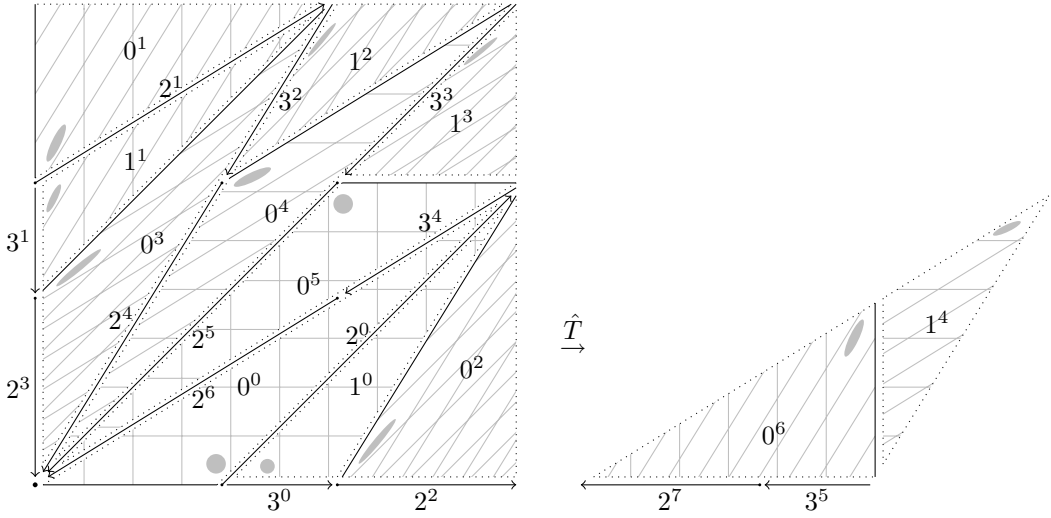}
\caption{The map $\hat T$, $\lambda=1/\gamma$. ($\ell^k$ stands for
$T^k(D_\ell)$.)} \label{figR5}
\end{figure}

\begin{figure}
\includegraphics{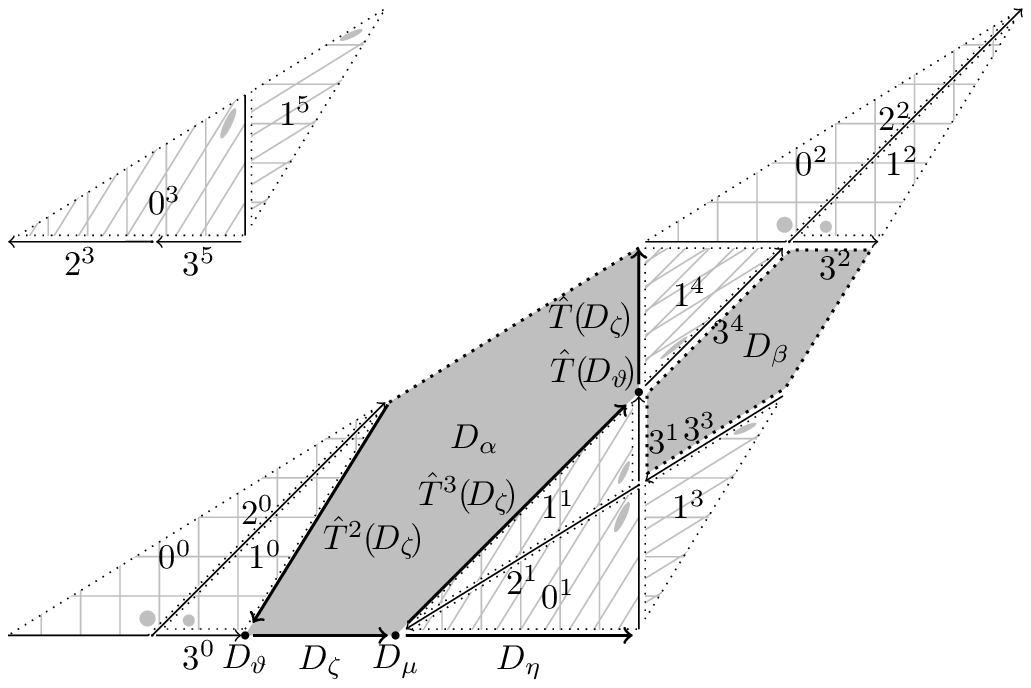}
\caption{The trajectory of the scaled domains and $\mathcal P$,
$\lambda=1/\gamma$. ($\ell^k$ stands for $\hat T^kU(D_\ell)$.)} \label{figP5}
\end{figure}

\begin{theorem}\label{thperiods5}
If $\lambda=1/\gamma$, then the minimal period length $\pi(z)$ is \\
\centerline{ \begin{tabular}{cl}
$1$ & if $z=(0,0)$ \\
$2(5\cdot 4^n+4)/3$ & if
$S^nR(z)=\big(\frac\gamma{\gamma^2+1},\frac{1/\gamma}{\gamma^2+1}\big)$ for
some $n\ge0$ \\
$10(5\cdot 4^n+4)/3$ & for the other points with $S^nR(z)\in D_\alpha$ \\
$4(5\cdot 4^n-2)/3$ & if
$S^nR(z)=\big(\frac{\gamma^2}{\gamma^2+1},\frac1{\gamma^2+1}\big)$ for some
$n\ge0$ \\
$20(5\cdot 4^n-2)/3$ & for the other points with $S^nR(z)\in D_\beta$ \\
$5(4^{n+1}-1)/3$ & if $S^nR(z)=(0,1/2)$ for some $n\ge0$ \\
$10(4^{n+1}-1)/3$ & for the other points with $S^nR(z)\in D_\vartheta$ \\
$5(2\cdot 4^{n+1}+7)/3$ & if $S^nR(z)=\hat T^m(1/(2\gamma),0)$ for some
$m\in\{0,1,2,3\}$ and $n\ge0$ \\
$10(2\cdot 4^{n+1}+7)/3$ & for the other points with
$S^nR(z)\in \hat T^m(D_\zeta)$ \\
$(10\cdot 4^n+11)/3$ & if $S^nR(z)=(1/\gamma^2,0)$ for some $n\ge0$ \\
$(5\cdot 4^{n+1}+19)/3$ & if $S^nR(z)=\hat T^m(1/\gamma^3,0)$ for some
$m\in\{0,1\}$ and $n\ge0$ \\
$\infty$ & if $S^nR(z)\in\mathcal D\setminus\mathcal P$ for all $n\ge0$.
\end{tabular} }
\end{theorem}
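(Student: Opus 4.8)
The plan is to prove Theorem~\ref{thperiods5} by the same mechanism already used for Theorems~\ref{thperiods2}--\ref{thperiods4}. Since the paragraph preceding the statement guarantees that the hypotheses of Proposition~\ref{propperiodic} and Theorem~\ref{thmperiodic} hold for $\lambda=1/\gamma$, the dichotomy itself is free: $S^nR(z)\in\mathcal D\setminus\mathcal P$ for all $n$ gives $\pi(z)=\infty$, while if $S^nR(z)\in\mathcal P$ with $\hat\pi(S^nR(z))=p$ and coding $\ell_1\cdots\ell_p=\iota_0(S^nR(z))\cdots\iota_{p-1}(S^nR(z))$, then $\pi(z)=\tau(\sigma^n(\ell_1\cdots\ell_p))$. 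Hence the entire theorem reduces to two tasks: (i) computing $\tau(\sigma^n(\ell))$ for each $\ell\in\{0,1,2,3\}$, and (ii) reading off from Figure~\ref{figP5} the coding word attached to each cell of the decomposition $\mathcal P=D_\alpha\cup D_\beta\cup\bigcup_{k=0}^3\hat T^k(D_\zeta)\cup D_\vartheta\cup\bigcup_{k=0}^1\hat T^k(D_\eta)\cup D_\mu$, together with its first-return period.

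For (i) I would work with the incidence matrix of $\sigma$, whose columns are the abelianizations of $\sigma(0)=010$, $\sigma(1)=01110$, $\sigma(2)=012$, $\sigma(3)=01112$:
\[
M=\begin{pmatrix}2&2&1&1\\1&3&1&3\\0&0&1&1\\0&0&0&0\end{pmatrix}.
\]
The key feature is its block-triangular shape: the letter $3$ occurs in no image, so its row vanishes, and the restriction of $M$ to $\{0,1\}$ is exactly the matrix of the $\lambda=-1/\gamma$ case, with eigenvalues $4$ and $1$. Thus $\binom{|\sigma^n(0)|_0}{|\sigma^n(0)|_1}$ and $\binom{|\sigma^n(1)|_0}{|\sigma^n(1)|_1}$ are given verbatim by the formulas in the proof of Theorem~\ref{thperiods2}, with $|\sigma^n(0)|_2=|\sigma^n(0)|_3=|\sigma^n(1)|_2=|\sigma^n(1)|_3=0$. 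For the two remaining letters I would exploit that $\sigma(2)$ contains exactly one $2$ and $\sigma(3)$ one $2$ and no $3$: this forces $|\sigma^n(2)|_2=1$ and collapses the recursions for $|\sigma^n(2)|_{0,1}$ and $|\sigma^n(3)|_{0,1}$ into geometric sums of the already-known contributions of $0$ and $1$. Carrying this out would yield
\begin{gather*}
\tau(\sigma^n(0))=\tfrac{10\cdot4^n+8}{3},\qquad \tau(\sigma^n(1))=\tfrac{20\cdot4^n-8}{3},\\
\tau(\sigma^n(2))=\tfrac{10\cdot4^n+11}{3},\qquad \tau(\sigma^n(3))=\tfrac{20\cdot4^n-5}{3},
\end{gather*}
after which every numerical entry of the table is a short linear combination of these.

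For (ii) I would match each table row to a cell of $\mathcal P$ by comparing its tabulated value with a $\tau(\sigma^n(w))$. The cells $D_\alpha,D_\beta$ consist of points coded by $0^\infty,1^\infty$, each with a central $\hat T$-fixed point (coding $0$ resp.\ $1$) surrounded by points of first-return period $5$ (coding $0^5$ resp.\ $1^5$); this reproduces the first four nontrivial rows with their factor-$5$ pairing, exactly as for $\lambda=-1/\gamma$. The genuinely new bookkeeping concerns the distinguished orbits carrying the letters $2$ and $3$: one finds a fixed point of coding $2$ (giving $\tfrac{10\cdot4^n+11}{3}$), a period-two orbit whose coding multiset is $\{0,2\}$ (giving $\tfrac{5\cdot4^{n+1}+19}{3}$), a period-four orbit whose coding multiset is $\{0,0,0,2\}$ (giving $5\tfrac{2\cdot4^{n+1}+7}{3}$), and a fixed point of coding $3$ (giving $5\tfrac{4^{n+1}-1}{3}$); these multisets are in fact forced by matching the tabulated $\tau$-values, the cyclic order being irrelevant, and the generic points sharing each short orbit acquire the corresponding doubled period. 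I expect the main obstacle to lie precisely here: unlike the two-letter $-1/\gamma$ case, one must identify from Figure~\ref{figP5} alone the coding word and the exact first-return period of each of these several fixed points and short periodic orbits, pin down which of the named single points $D_\vartheta,D_\mu$ and segments $D_\zeta,D_\eta$ realises which coding, and confirm the local rotation orders $5$ and $2$ that produce the generic-versus-central multiplicities --- all of this being consistent with $\hat T(z)=T^{\tau(\ell)}(z)$ for $\tau=(6,4,7,5)$. Once these codings are established, substitution of the four $\tau$-values is entirely routine, and the $(0,0)$ and $\infty$ rows follow directly from $\mathcal R=\{(0,0)\}$ and Theorem~\ref{thmperiodic}.
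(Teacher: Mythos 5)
Your proposal is correct and follows essentially the same route as the paper: reduce to Proposition~\ref{propperiodic} and Theorem~\ref{thmperiodic}, compute $\tau(\sigma^n(\ell))$ for $\ell\in\{0,1,2,3\}$ (your four values $(10\cdot4^n+8)/3$, $(20\cdot4^n-8)/3$, $(10\cdot4^n+11)/3$, $(20\cdot4^n-5)/3$ coincide exactly with the paper's, which likewise reuses the $\lambda=-1/\gamma$ counts for the letters $0,1$ and exploits that $2$ and $3$ never reappear except for a single $2$), and then assign to each cell of $\mathcal P$ its coding word read from Figure~\ref{figP5} --- your assignments $0\leftrightarrow D_\alpha$, $1\leftrightarrow D_\beta$, $2\leftrightarrow D_\mu$, $02\leftrightarrow$ the orbit of $(1/\gamma^3,0)$, $0002\leftrightarrow$ the $D_\zeta$ orbit, $3\leftrightarrow D_\eta$, with central-versus-generic multiplicities $5$ (pentagons) and $2$ (segments), are precisely those in the paper's proof. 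Your systematization via the block-triangular incidence matrix is a cosmetic variant, and your honest flagging that the coding words must ultimately be verified geometrically from the figure (rather than reverse-engineered from the tabulated values) matches the level of detail the paper itself supplies.
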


\begin{proof}
As for $\lambda=-1/\gamma$, we have
\begin{gather*}
\begin{pmatrix}|\sigma^n(0)|_0\\|\sigma^n(0)|_1\end{pmatrix}=
4^n\begin{pmatrix}1/3\\1/3\end{pmatrix}+\begin{pmatrix}2/3\\-1/3\end{pmatrix},
\qquad \begin{pmatrix}|\sigma^n(1)|_0\\|\sigma^n(1)|_1\end{pmatrix}=
4^n\begin{pmatrix}2/3\\2/3\end{pmatrix}+\begin{pmatrix}-2/3\\1/3\end{pmatrix},
\end{gather*}
hence $\tau(\sigma^n(0))=\frac{10}3 4^n+\frac83$,
$\tau(\sigma^n(1))=\frac{20}3 4^n-\frac83$,
$\tau(\sigma^n(2))=\frac{10}3 4^n+\frac{11}3$,
$\tau(\sigma^n(3))=\frac{20}3 4^n-\frac53$.
For $S^nR(z)\in D_\alpha$, we have $\pi(z)=\tau(\sigma^n(0))$ and
$\pi(z)=5\tau(\sigma^n(0))$ respectively; if $S^nR(z)\in D_\beta$, then
$\pi(z)=\tau(\sigma^n(1))$ and $5\tau(\sigma^n(1))$ respectively; if
$S^nR(z)\in D_\eta$, then $\pi(z)=\tau(\sigma^n(3))$ and $2\tau(\sigma^n(3))$ 
respectively; if $S^nR(z)\in D_\zeta$, then $\pi(z)=\tau(\sigma^n(0002))$ and 
$2\tau(\sigma^n(0002))$ respectively; if $S^nR(z)=\hat T^m(1/\gamma^3,0)$, then 
$\pi(z)=\tau(\sigma^n(02))$; if $S^nR(z)=(1/\gamma^2,0)$, then 
$\pi(z)=\tau(\sigma^n(2))$.
\end{proof}

Note that $\hat T^mU(D_3)$ plays no role in the calculation of $\delta$ since 
$U(D_3)\subset U(\mathcal P)$ and thus $\pi(z)<\infty$ for all 
$z\in\hat T^mU(D_3)$.
For the other $z\in\mathcal D\setminus(\mathcal P\cup U(\mathcal D))$,
we choose $\hat s(z)$ as follows:
\begin{align*}
z\in\hat T^2U(D_0\cup D_1\cup D_2):\ & \hat s(z)=-2,\,s(z)=-10,\
t(z)=\hat T^{-2}(z)-z=(-1/\gamma,-1/\gamma^2) \\
z\in\hat TU(D_1\cup D_2):\ & \hat s(z)=-1,\,s(z)=-6,\
t(z)=\hat T^{-1}(z)+zA^{-1}=(1,1/\gamma) \\
z\in\hat T^4U(D_1):\ & \hat s(z)=1,\,s(z)=6,\ t(z)=\hat T(z)+zA=(1/\gamma,0) \\
z\in\hat TU(D_0)\cup\hat T^3U(D_1):\ & \hat s(z)=2,\,s(z)=10,\
t(z)=\hat T^2(z)-z=(-1/\gamma^2,0) 
\end{align*}
This gives again $\delta=\gamma^2/\gamma=\gamma$ since
$$
\{(1/\gamma,0)A^h:\,h\in\mathbb Z\}=
\pm\{(1/\gamma,0),\,(0,1/\gamma),\,(1/\gamma,1),\,(1,1),\,(1,1/\gamma)\}.
$$

\begin{theorem}
$\pi(z)$ is finite for all $z\in(\mathbb Z[\gamma]\cap[0,1))^2$, but
$\big(T^k\big(1/4,1/(4\gamma^3)\big)\big)_{k\in\mathbb Z}$ is aperiodic.
\end{theorem}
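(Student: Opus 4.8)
The plan is to mirror the two-part strategy used for $\lambda=\gamma$, $\lambda=-1/\gamma$, $\lambda=\sqrt2$ and $\lambda=-\sqrt2$, whose structure is already laid out in the preceding sections. The statement has two halves: first, that every point with coordinates in $\mathbb Z[\gamma]\cap[0,1)$ is periodic (i.e.\ Conjecture~\ref{cj} holds for $\lambda=1/\gamma$), and second, that the explicit point $\big(1/4,1/(4\gamma^3)\big)$ is aperiodic, showing the bound $\delta=\gamma$ is attained and that $\mathbb Z[\gamma]$-periodicity is not automatic for larger denominators.

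For the first half I would invoke Proposition~\ref{propaper}, which I am entitled to use since all the hypotheses of Theorem~\ref{thmperiodic} were verified earlier in this section (all points of $\mathcal P$ are periodic, $|\sigma^n(\ell)|\to\infty$, and $\mathcal R=\{(0,0)\}$). With $Q=1$, the proposition reduces the problem to checking the finitely many points $z\in\mathbb Z[\gamma]^2\cap\mathcal D$ satisfying $\|V(z)'\|_\infty\le\delta=\gamma$; since $V(z)=z$ here, this means $z=(x,y)$ with $x,y\in\mathbb Z[\gamma]\cap[0,1)$ and $|x'|,|y'|\le\gamma$, forcing $x,y\in\{0,1/\gamma\}$. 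I would then intersect this finite list with the defining inequalities of $\mathcal D$, namely $\gamma x-1<y<x/\gamma$, to see which candidates actually lie in $\mathcal D$, and confirm directly that each survivor lies in $\mathcal P$ (hence is periodic by Theorem~\ref{thperiods5}). This is the same short finite verification carried out for the other quadratic cases, so I expect it to be routine.

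For the second half I would produce an explicit $S$-orbit witnessing aperiodicity, exactly as in the analogous theorems for the other parameters. Starting from $z=\big(1/4,1/(4\gamma^3)\big)$, I would compute $R(z)$ (which here is simply $z$ or an early iterate, since $\mathcal R=\{(0,0)\}$), and then iterate $S(w)=V^{-1}\!\big((V(w)A^{s(w)}+t(w))/\kappa\big)$ using the table of $(s(z),t(z))$ values and the relation $\kappa=1/\gamma^2$ compiled just before the statement. The goal is to exhibit a finite cycle $S^{\,p}R(z)=R(z)$ with every iterate lying in $\mathcal D\setminus\mathcal P$, which by Theorem~\ref{thperiods5} forces $\pi(z)=\infty$. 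The arithmetic should close up after a small number of steps (the other cases needed four), with the $t(z)=(-1/\gamma^2,0)$ or $(-1/\gamma,-1/\gamma^2)$ digits recurring; I would present each $VS^kR(z)$ in the $\frac14\mathbb Z[\gamma]$ coordinates to make the return transparent.

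The main obstacle I anticipate is purely bookkeeping rather than conceptual: selecting the correct branch of $\hat s(z)$ (and hence the matching $s(z),t(z)$ pair from the four-line table) at each step of the $S$-iteration, since the sign choices for $\hat s(z)$ were made by hand and must be consistent with which translate $\hat T^m U(D_\ell)$ each intermediate point falls into. A secondary subtlety is verifying that every point of the computed $S$-cycle genuinely avoids $\mathcal P=D_\alpha\cup D_\beta\cup\bigcup_{k=0}^3\hat T^k(D_\zeta)\cup D_\vartheta\cup\bigcup_{k=0}^1\hat T^k(D_\eta)\cup D_\mu$; I would confirm this by checking the defining (in)equalities of those pieces against the explicit coordinates, since a single point landing in $\mathcal P$ would collapse the argument to finite period. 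Once the branch choices are fixed correctly, the cycle should close automatically.
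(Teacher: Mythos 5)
Your proposal follows the paper's proof essentially verbatim: the first half is exactly the paper's application of Proposition~\ref{propaper} with $\delta=\gamma$ (where the finite check turns out to produce \emph{no} candidates at all, since none of the four points with $x,y\in\{0,1/\gamma\}$ satisfies $\gamma x-1<y<x/\gamma$), and the second half is the paper's explicit $S$-iteration, which closes after three steps ($S(z)=(\gamma^2/4,1/(4\gamma))$, $S^2(z)=((3\gamma-2)/4,\gamma/4)$, $S^3(z)=z$) rather than four, whence $\pi(z)=\infty$ by Theorem~\ref{thperiods5}. No gaps; the approach is the same.
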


\begin{proof}
Conjecture~\ref{cj} holds for $\lambda=1/\gamma$ since no 
$z\in\mathbb Z[\gamma]^2\cap\mathcal D$ satisfies $\|z'\|_\infty\le\gamma$.
It can be shown that all points in $(\frac12\mathbb Z[\gamma]\cap[0,1))^2$ and
$(\frac13\mathbb Z[\gamma]\cap[0,1))^2$ are periodic as well.
If $z=\big(1/4,1/(4\gamma^3)\big)$, then we have
$S(z)=\big(\gamma^2/4,1/(4\gamma)\big)$,
$S^2(z)=\gamma^2\big(S(z)-(1/\gamma^2,0)\big)=\big((3\gamma-2)/4,\gamma/4\big)$
and $S^3(z)=\gamma^2\big(S^2(z)-(1/\gamma,1/\gamma^2)\big)=
\big(1/4,1/(4\gamma^3)\big)=z$.
\end{proof}

\begin{figure}
\begin{minipage}{7.3cm}
\centerline{\includegraphics[scale=.8]{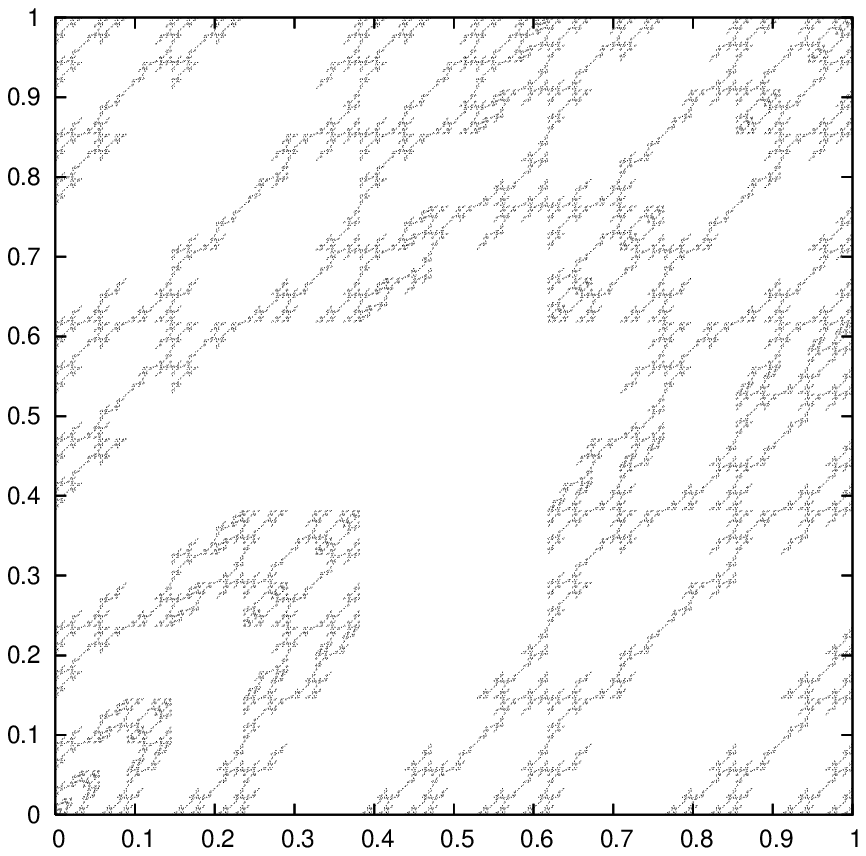}}
\caption{\mbox{Aperiodic points, $\lambda=1/\gamma$.}}\label{fig5a}
\end{minipage}
\begin{minipage}{7.3cm}
\centerline{\includegraphics[scale=.8]{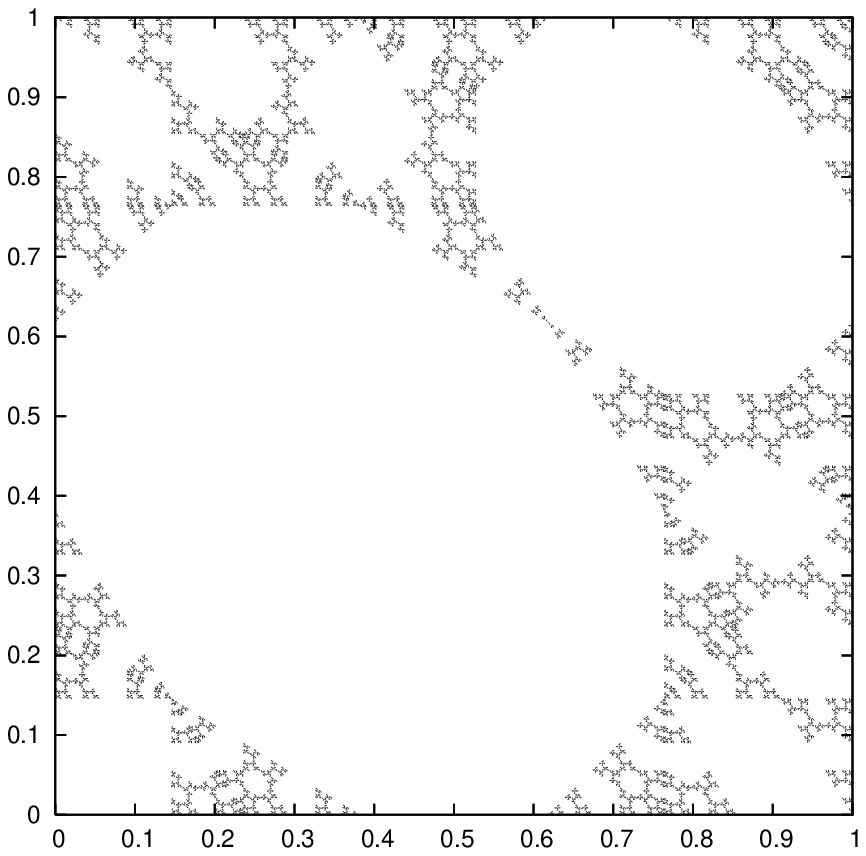}}
\caption{\mbox{Aperiodic points, $\lambda=-\gamma$.}}\label{fig6a}
\end{minipage}
\end{figure}

\section{The case $\lambda=-\gamma=-2\cos\frac\pi5$}

Let $\lambda=-\gamma$ ($\lambda'=1/\gamma$) and set
$$
\mathcal D=\{(x,y)\in[0,1)^2:\,x<y,\,\gamma x+y\ge4-\gamma\}=D_0\cup D_1
$$
with $D_0=\{(x,y)\in\mathcal D:x>1-1/\gamma^5\}$,
$D_1=\{(x,y)\in\mathcal D:x\le1-1/\gamma^5\}$.
Figure~\ref{figR6} shows that $\hat T(z)=T^{\tau(\ell)}(z)$ if $z\in D_\ell$,
with $\tau(0)=42$, $\tau(1)=28$, and
$$
\mathcal R=\{(0,0)\}\cup D_A\cup D_B\cup\bigcup\nolimits_{k=0}^4T^k(D_\Gamma)
\cup\bigcup\nolimits_{k=0}^1T^k(D_\Delta)\cup
\bigcup\nolimits_{k=0}^{24}T^k(D_E)\cup\bigcup\nolimits_{k=0}^{10}T^k(D_Z)
$$
with $D_A=\{z:T^{k+1}(z)=T^k(z)A+(0,1)\mbox{ for all }k\in\mathbb Z\}$,
$D_B=\{z:T^{k+1}(z)=T^k(z)A+(0,2)\}$, $D_\Delta=\{z\in[0,1)^2:T^{2k+1}(z)=
T^{2k}(z)A+(0,2),\,T^{2k}(z)=T^{2k-1}(z)A+(0,1)\mbox{ for all }k\in\mathbb Z\}$,
$D_\Gamma=\{(x,y):0\le x,y\le1/\gamma^4\}\setminus
\{(0,0),(1/\gamma^4,1/\gamma^4)\}$, $D_E=\{(x,x):1-1/\gamma^5<x<1\}$,
$D_Z=\{(1-1/\gamma^5,1-1/\gamma^5)\}$.
Set $\kappa=1/\gamma^2$, $V(z)=\gamma^4\big((1,1)-z)$, i.e.
$$
U(z)=(1,1)-\big((1,1)-z\big)/\gamma^2=z/\gamma^2+(1/\gamma,1/\gamma).
$$
Then Figure~\ref{figP6} shows that the conditions in Section~\ref{sectgeneral}
are satisfied by $\varepsilon=1$ and
$$
\sigma:\ 0\mapsto010\qquad 1\mapsto01110.
$$
All points in $\mathcal P=D_\alpha\cup D_\beta$ are periodic, with
$D_\alpha=\{z\in\mathcal D:\hat T^k(z)\in D_0\mbox{ for all }k\in\mathbb Z\}$,
$D_\beta=\{z\in\mathcal D:\hat T^k(z)\in D_1\mbox{ for all }k\in\mathbb Z\}$.
Since $|\sigma^n(\ell)|\to\infty$ as $n\to\infty$ for all $\ell\in\mathcal A$,
all conditions of Proposition~\ref{propperiodic} and Theorem~\ref{thmperiodic}
are satisfied, and we obtain the following theorem.

\begin{figure}
\includegraphics[scale=.92]{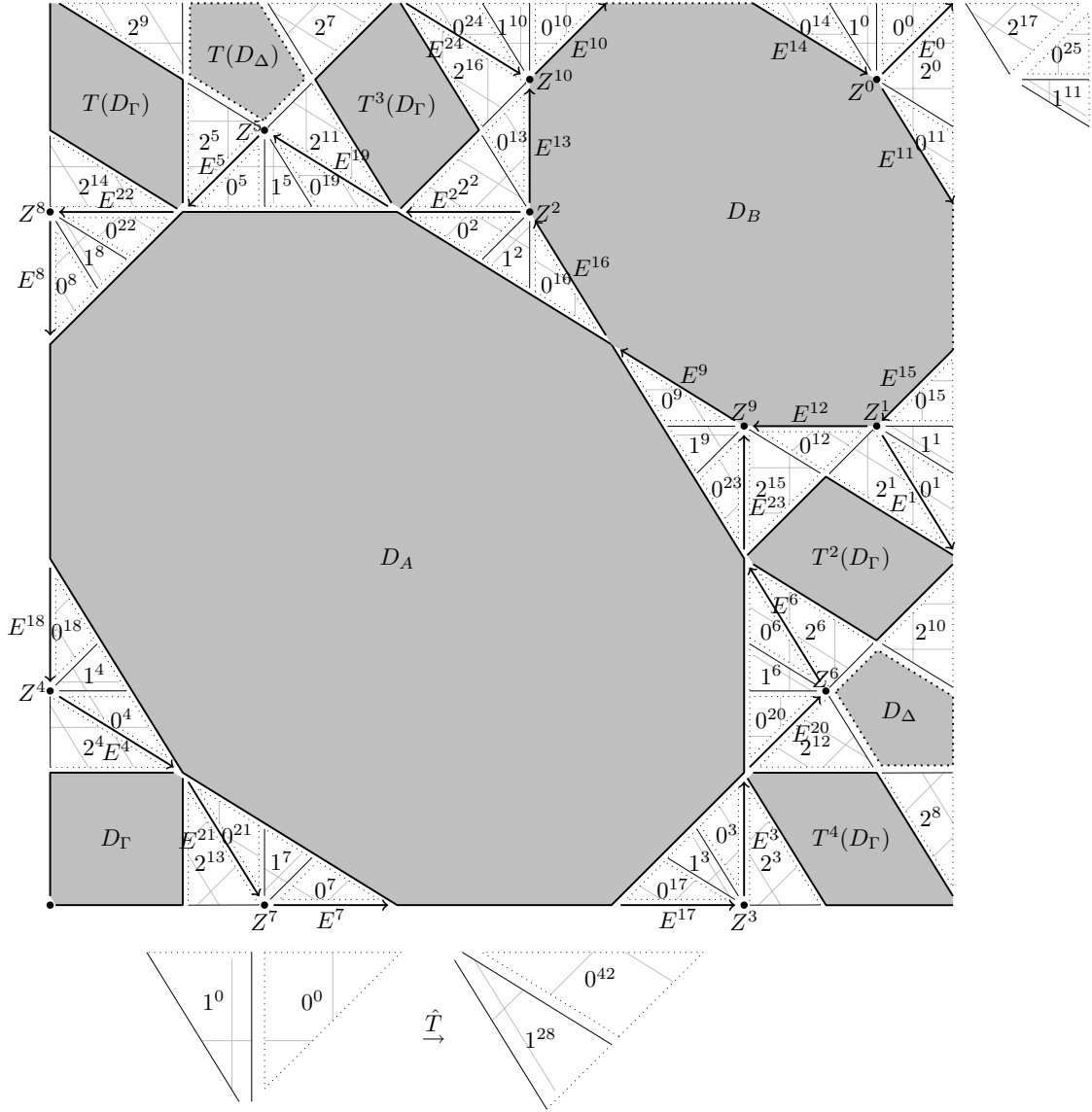}
\caption{The map $\hat T$ and the set $\mathcal R$, $\lambda=-\gamma$.
($\ell^k$ stands for $T^k(D_\ell)$.)} \label{figR6}
\end{figure}

\begin{figure}
\includegraphics{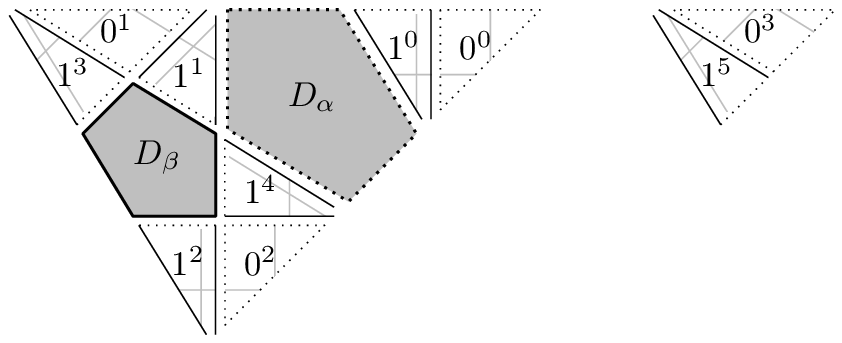}
\caption{The trajectory of the scaled domains and $\mathcal P$,
$\lambda=-\gamma$. ($\ell^k$ stands for $\hat T^kU(D_\ell)$.)} \label{figP6}
\end{figure}

\begin{theorem}\label{thperiods6}
If $\lambda=-\gamma$, then the minimal period length $\pi(z)$ is \\
\centerline{ \begin{tabular}{cl}
$1$ & if $z\in\{(0,0),(1/\gamma^2,1/\gamma^2),(2/\gamma^2,2/\gamma^2)\}$ \\
$2$ & if $z\in\{(\frac{5-\gamma}{\gamma^2+1},\frac{2/\gamma^2}{\gamma^2+1}),
(\frac{2/\gamma^2}{\gamma^2+1},\frac{5-\gamma}{\gamma^2+1})\}$ \\
$5$ & if $z=T^m(1/(2\gamma^4),1/(2\gamma^4))$ for some $m\in\{0,1,2,3,4\}$ \\
$10$ & for the other points of $D_A$, $D_B$, $T^m(D_\Gamma)$, $T^m(D_\Delta)$\\
$11$ & if $z=T^m(1-1/\gamma^5,1-1/\gamma^5)$ for some $m\in\{0,1,\ldots,10\}$\\
$25$ & if $z=T^m(1-1/(2\gamma^5),1-1/(2\gamma^5))$ for some
$m\in\{0,1,\ldots,24\}$ \\
$50$ & for the other points of $T^m(D_E)$ \\
$2(35\cdot 4^n+28)/3$ & if $S^nR(z)$ is the center of $D_\alpha$ \\
$10(35\cdot 4^n+28)/3$ & for the other points of $D_\alpha$ \\
$4(35\cdot 4^n-14)/3$ & if $S^nR(z)$ is the center of $D_\beta$ \\
$20(35\cdot 4^n-14)/3$ & for the other points of $D_\beta$ \\
$\infty$ & if $S^nR(z)\in\mathcal D\setminus\mathcal P$ for all $n\ge0$
\end{tabular} }
\end{theorem}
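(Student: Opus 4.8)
The plan is to follow verbatim the scheme of the proofs of Theorems~\ref{thperiods2} and~\ref{thperiods5}: the substitution $\sigma\colon 0\mapsto010,\ 1\mapsto01110$ is literally the one occurring there, so all of its combinatorial data is already available. Since every hypothesis of Theorem~\ref{thmperiodic} and Proposition~\ref{propperiodic} has been verified in the paragraph preceding the statement, I would split the table according to the orbit of $S^nR(z)$. The last row is then immediate from Theorem~\ref{thmperiodic}: if $S^nR(z)\in\mathcal D\setminus\mathcal P$ for every $n\ge0$, then $\pi(z)=\infty$.

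For the points with $S^nR(z)\in\mathcal P=D_\alpha\cup D_\beta$ I would invoke Proposition~\ref{propperiodic}. The incidence matrix of $\sigma$ has eigenvalues $4$ and $1$, giving exactly the letter counts recorded in the proof of Theorem~\ref{thperiods2}, namely $|\sigma^n(0)|_0=\frac{4^n+2}3$, $|\sigma^n(0)|_1=\frac{4^n-1}3$ and $|\sigma^n(1)|_0=\frac{2\cdot4^n-2}3$, $|\sigma^n(1)|_1=\frac{2\cdot4^n+1}3$. Inserting the present return times $\tau(0)=42$, $\tau(1)=28$ yields $\tau(\sigma^n(0))=2(35\cdot4^n+28)/3$ and $\tau(\sigma^n(1))=4(35\cdot4^n-14)/3$. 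A point of $D_\alpha$ satisfies $\hat T^k(z)\in D_0$ for all $k$, and its $\hat T$-orbit has length $1$ at the centre and $5$ otherwise, so $\iota_0\cdots\iota_{p-1}$ is $0$ or $00000$; Proposition~\ref{propperiodic} then gives $\pi(z)=\tau(\sigma^n(0))$ or $5\tau(\sigma^n(0))$, and symmetrically $D_\beta$ produces $\tau(\sigma^n(1))$ and $5\tau(\sigma^n(1))$. These four values are precisely the four $4^n$-rows of the table.

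It remains to pin down the finitely many periods of the points of $\mathcal R$, and here I would exploit $A^5=-I$ and $A^{10}=I$ (the value $h=10$ from Section~\ref{sectgeneral}). For $D_A$ (constant carry $(0,1)$), $D_B$ (carry $(0,2)$) and $D_\Delta$ (the alternating carry) the ten-step carry sum cancels because $\sum_{k=0}^{9}A^k=0$, whence $T^{10}(z)=z$ and the generic period is $10$. The exceptional points are the unique affine fixed points of $T$, of $T^2$ and of $T^5$ lying in the relevant carry region: these give the periods $1$, $2$ and $5$, the period-$5$ case being the centre $(1/(2\gamma^4),1/(2\gamma^4))$ of the self-symmetric square $D_\Gamma$, fixed by the point reflection $T^5(z)=-z+C$.

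The genuinely delicate rows are the periods $11$, $25$ and $50$, which do not divide $10$ and so cannot be read off from $A^{10}=I$. For $D_Z$ one has $A^{11}=A$, so $T^{11}(z)=z$ becomes the single linear equation $z(I-A)=c$, whose unique solution I would check to be $(1-1/\gamma^5,1-1/\gamma^5)$; this is exactly why $D_Z$ is a single point, and one verifies $T^k(z)\neq z$ for $1\le k\le10$ to see that the minimal period is $11$. For $D_E$, using $A^{25}=A^5=-I$, I would show that $T^{25}$ acts on $D_E$ as a point reflection, fixing its centre $(1-1/(2\gamma^5),1-1/(2\gamma^5))$ (period $25$), while for the remaining diagonal points the carries cancel only after fifty steps (period $50$). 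This self-consistent bookkeeping of the long diagonal carry sequences is the main obstacle—and the part the authors confirm through Figure~\ref{figR6}; everything else reduces to the machinery already established.
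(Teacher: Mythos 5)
Your proposal is correct and follows essentially the same route as the paper: the paper's own proof consists precisely of your second paragraph --- the eigenvalue/letter-count computation giving $\tau(\sigma^n(0))=2(35\cdot4^n+28)/3$ and $\tau(\sigma^n(1))=4(35\cdot4^n-14)/3$, combined via Proposition~\ref{propperiodic} with the codings $0$, $00000$, $1$, $11111$ for the center and non-center points of $D_\alpha$ and $D_\beta$ --- while the finite rows for $\mathcal R$ are read off from the explicit descriptions of $D_A,D_B,D_\Gamma,D_\Delta,D_E,D_Z$ and Figure~\ref{figR6} rather than reproved, so your fixed-point verifications are a harmless elaboration of what the paper leaves implicit. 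One cosmetic point there: for the alternating carries on $D_\Delta$ the cancellation is not a direct consequence of $\sum_{k=0}^{9}A^k=0$ as you state, but of $T^{10}(z)=zA^{10}+\bigl((0,1)+(0,2)A\bigr)\sum_{k=0}^{4}A^{2k}$ together with $\sum_{k=0}^{4}A^{2k}=(A^{10}-I)(A^2-I)^{-1}=0$, which holds because the eigenvalues of $A^2$ are primitive fifth roots of unity.
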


\begin{proof}
As for $\lambda=-1/\gamma$ and $\lambda=1/\gamma$, we have
\begin{gather*}
\begin{pmatrix}|\sigma^n(0)|_0\\|\sigma^n(0)|_1\end{pmatrix}=
4^n\begin{pmatrix}1/3\\1/3\end{pmatrix}+\begin{pmatrix}2/3\\-1/3\end{pmatrix},
\qquad
\begin{pmatrix}|\sigma^n(1)|_0\\|\sigma^n(1)|_1\end{pmatrix}=
4^n\begin{pmatrix}2/3\\2/3\end{pmatrix}+\begin{pmatrix}-2/3\\1/3\end{pmatrix},
\end{gather*}
hence $\tau(\sigma^n(0))=(70\cdot4^n+56)/3$,
$\tau(\sigma^n(1))=(140\cdot 4^n-56)/3$.
For $S^nR(z)\in D_\alpha$, we have $\pi(z)=\tau(\sigma^n(0))$ and
$5\tau(\sigma^n(0))$ respectively; if $S^nR(z)\in D_\beta$, then
$\pi(z)=\tau(\sigma^n(1))$ and $5\tau(\sigma^n(1))$ respectively.
\end{proof}

We choose $\hat s(z)$ as follows and obtain the following $s(z),t(z)$:
\begin{align*}
z\in\hat T^2U(D_0\cup D_1):\ & \hat s(z)=-2,\,s(z)=-70,\
t(z)=V(\hat T^{-2}(z))-V(z)=(-1/\gamma^2,-1/\gamma^2) \\
z\in\hat TU(D_1):\ & \hat s(z)=-1,\,s(z)=-42,\
t(z)=V(\hat T^{-1}(z))-V(z)A^{-2}=(1/\gamma,1/\gamma) \\
z\in\hat T^4U(D_1):\ & \hat s(z)=1,\,s(z)=42,\
t(z)=V(\hat T(z))-V(z)A^2=(1,0) \\
z\in\hat TU(D_0)\cup\hat T^3U(D_1):\ & \hat s(z)=2,\,s(z)=70,\
t(z)=V(\hat T^2(z))-V(z)=(-1/\gamma,0)
\end{align*}
This gives again $\delta=\gamma^2/\gamma=\gamma$ since
$$
\{(1,0)A^h:\,h\in\mathbb Z\}=
\pm\{(1,0),\,(0,1),\,(1,-1/\gamma),\,(1/\gamma,1/\gamma),\,(1/\gamma,-1)\}.
$$

\begin{theorem}
$\pi(z)$ is finite for all $z\in(\mathbb Z[\gamma]\cap[0,1))^2$, but
$\pi\big(1-1/(3\gamma^2),1-1/(3\gamma^5)\big)=\infty$.
\end{theorem}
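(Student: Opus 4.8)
The plan is to dispatch the two assertions separately: the finiteness statement via the general reduction of Proposition~\ref{propaper}, and the aperiodicity of the single point by a direct computation of its $S$-orbit. For the finiteness part I would apply Proposition~\ref{propaper} with $Q=1$ and the value $\delta=\gamma$ computed just above; by its contrapositive it suffices to rule out an aperiodic point $\tilde z\in\mathbb Z[\gamma]^2\cap\mathcal D$ with $\|V(\tilde z)'\|_\infty\le\gamma$, and in fact I expect to show that \emph{no} point of $\mathbb Z[\gamma]^2\cap\mathcal D$ meets the conjugate bound. First I would make $V(\mathcal D)$ explicit: writing $V(z)=(u,v)=\gamma^4((1,1)-z)$, the defining conditions $0\le x,y<1$, $x<y$ and $\gamma x+y\ge 4-\gamma$ of $\mathcal D$ become $u>v>0$ together with $u/\gamma^3+v/\gamma^4\le 2\gamma-3=1/\gamma^3$. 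Because $\gamma$ is a unit, $z\in\mathbb Z[\gamma]^2$ forces $u,v\in\mathbb Z[\gamma]$, and from $V(z)'=\gamma^{-4}((1,1)-z')$ the bound $\|V(z)'\|_\infty\le\gamma$ is equivalent to $|u'|,|v'|\le\gamma$.

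Next I would carry out the (two-variable) lattice-point search. Since $v>0$, the edge inequality forces $u<1$; writing $u=a+b\gamma$ with $a,b\in\mathbb Z$ and using the conjugate spacing $u-u'=b\sqrt5$, the conditions $u\in(0,1)$, $|u'|\le\gamma$ leave only $u=1/\gamma$ (the value $b\sqrt5$ excludes every $|b|\ge1$ except $b=1$, and $b=0$ gives no integer in $(0,1)$). For $u=1/\gamma$ the edge inequality forces $0<v\le 1/\gamma$, while $v<u$ must be strict; a second application of the same spacing argument shows that no admissible $v\in\mathbb Z[\gamma]$ with $|v'|\le\gamma$ survives. Hence $\mathbb Z[\gamma]^2\cap\mathcal D$ contains no point satisfying the conjugate bound, so every $z\in(\mathbb Z[\gamma]\cap[0,1))^2\setminus\mathcal R$ is periodic; the points of $\mathcal R$ are periodic with the finite periods recorded in Theorem~\ref{thperiods6}.

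For the aperiodic point I would set $z=(1-1/(3\gamma^2),1-1/(3\gamma^5))$, compute $V(z)=(\gamma^2/3,1/(3\gamma))$, and check that it lies on the lower edge $\gamma x+y=4-\gamma$ of $\mathcal D$, i.e. $u/\gamma^3+v/\gamma^4=1/\gamma^3$ (this reduces to $\gamma^4-3\gamma^2+1=0$, which holds). Then, using the table of $(s(z),t(z))$ above, the reductions $A^{\pm70}=A^0$ and $A^{\pm42}=A^{\pm2}$ coming from $A^{10}=A^0$, and the recursion $VS^n(z)=\gamma^2\big(VS^{n-1}(z)A^{s_{n-1}}+t_{n-1}\big)$, I would compute the finitely many successive images, reading off from Figure~\ref{figP6} which region each iterate lies in so as to select the correct $(s,t)$ (for instance the branch $s_0=70$, $t_0=(-1/\gamma,0)$ yields $VS(z)=(2/3,\gamma/3)$, again on the edge). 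The key structural fact to verify is that each image again satisfies $u/\gamma^3+v/\gamma^4=1/\gamma^3$, hence lies in $\mathcal D\setminus\mathcal P$, and that the orbit $VS^n(z)$ returns to $V(z)$ after a short cycle. This gives $S^nR(z)\in\mathcal D\setminus\mathcal P$ for all $n\ge0$, whence $\pi(z)=\infty$ by Theorem~\ref{thperiods6}.

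The routine part is the $S$-orbit computation, which follows the template of the earlier cases. I expect the delicate step to be the first one: pinning down $V(\mathcal D)$ correctly and confirming that the two-variable lattice enumeration leaves no admissible pair $(u,v)$, since a single overlooked point with small conjugate lying outside $\mathcal R\cup\mathcal P$ would break the argument.
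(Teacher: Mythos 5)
Your proposal is correct and essentially reproduces the paper's proof: the finiteness half applies Proposition~\ref{propaper} with $\delta=\gamma$ together with the fact that $V(\mathcal D)=\{(u,v):u>v>0,\ \gamma u+v\le\gamma\}$ contains no point of $\mathbb Z[\gamma]^2$ with both conjugates bounded by $\gamma$ (your two-variable lattice enumeration merely makes explicit what the paper asserts in one line), and the aperiodicity half is the same explicit $S$-orbit computation, your first step $VS(z)=(2/3,\gamma/3)$ with $s_0=70$, $t_0=(-1/\gamma,0)$ agreeing exactly with the paper's four-cycle $VS^4(z)=V(z)$. One small caution: lying on the edge $\gamma u+v=\gamma$ does not by itself place an iterate in $\mathcal D\setminus\mathcal P$ --- one still needs $u>v>0$ and exclusion from the pentagons $D_\alpha\cup D_\beta$ --- but this is exactly the region check you delegate to Figure~\ref{figP6} when selecting the branch $(s,t)$, which is the same (implicit) level of verification as in the paper.
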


\begin{proof}
Since $V(\mathcal D)=\{(x,y):x>y>0,\gamma x+y\le\gamma\}$, we have no point
$z\in\mathbb Z[\gamma]^2\cap\mathcal D$ with $\|V(z)'\|_\infty\le\gamma$, and
Conjecture~\ref{cj} holds for $\lambda=-\gamma$.
If $V(z)=\big(\gamma^2/3,1/(3\gamma)\big)$, then we have
$$
VS(z)=\gamma^2\Big(V(z)-\Big(\frac1\gamma,0\Big)\Big)=
\Big(\frac23,\frac\gamma3\Big),\
VS^2(z)=\gamma^2\Big(VS(z)-\Big(\frac1{\gamma^2},\frac1{\gamma^2}\Big)\Big)=
\Big(\frac{\gamma^2+1}{3\gamma},\frac2{3\gamma}\Big), 
$$
$VS^3(z)=\gamma^2\big(VS^2(z)-\big(\frac1{\gamma^2},\frac1{\gamma^2}\big)\big)=
\big(\frac{3\gamma-2}3,\frac1{3\gamma^3}\big)$ and
$VS^4(z)=\gamma^2\big(VS^3(z)-\big(\frac1\gamma,0\big)\big)=V(z)$.
\end{proof}

\section{The case $\lambda=\sqrt3=-2\cos\frac{5\pi}6$}\label{sectsqrt3}

\begin{figure}
\includegraphics[scale=.95]{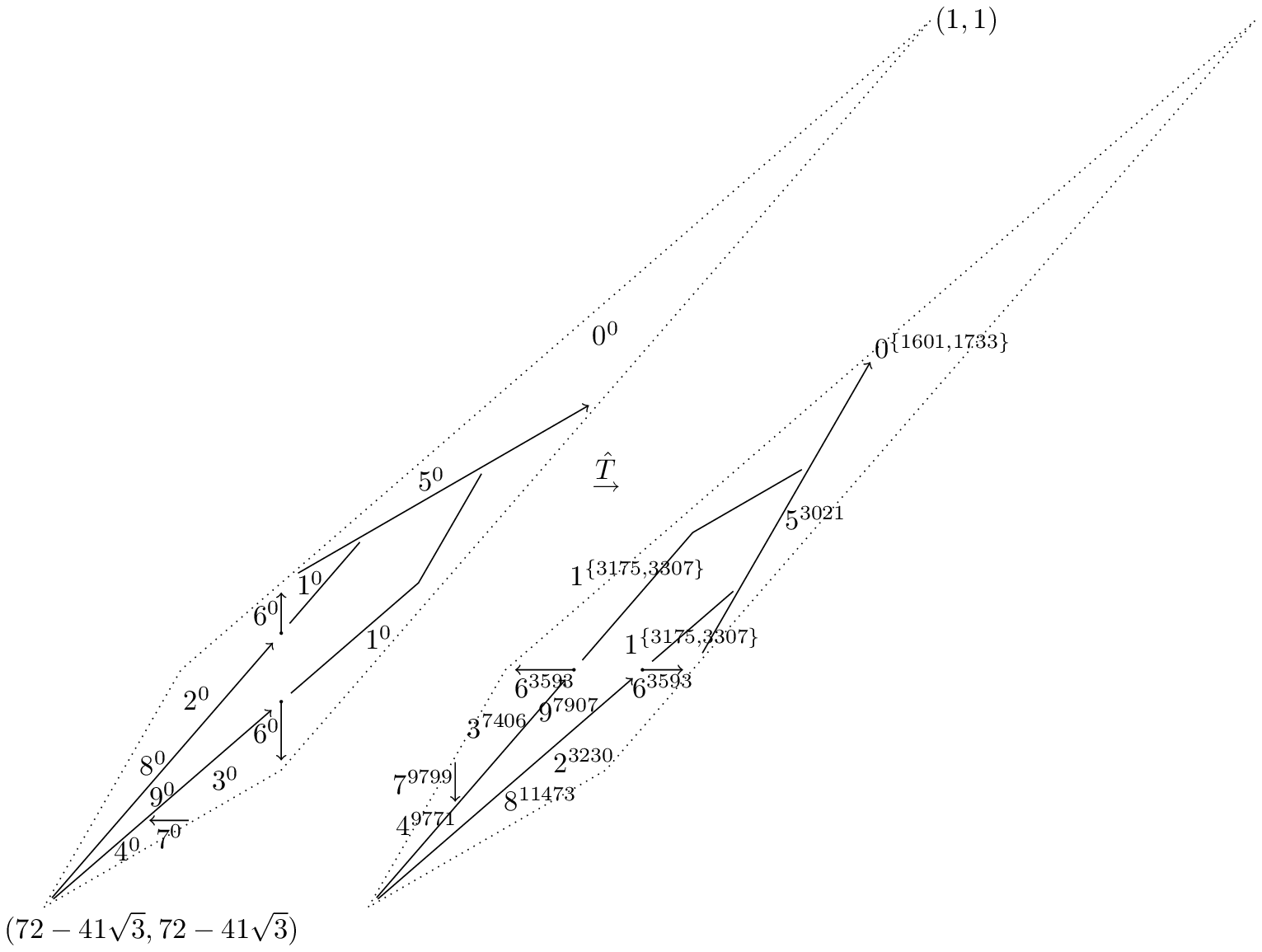} 
\includegraphics[scale=.95]{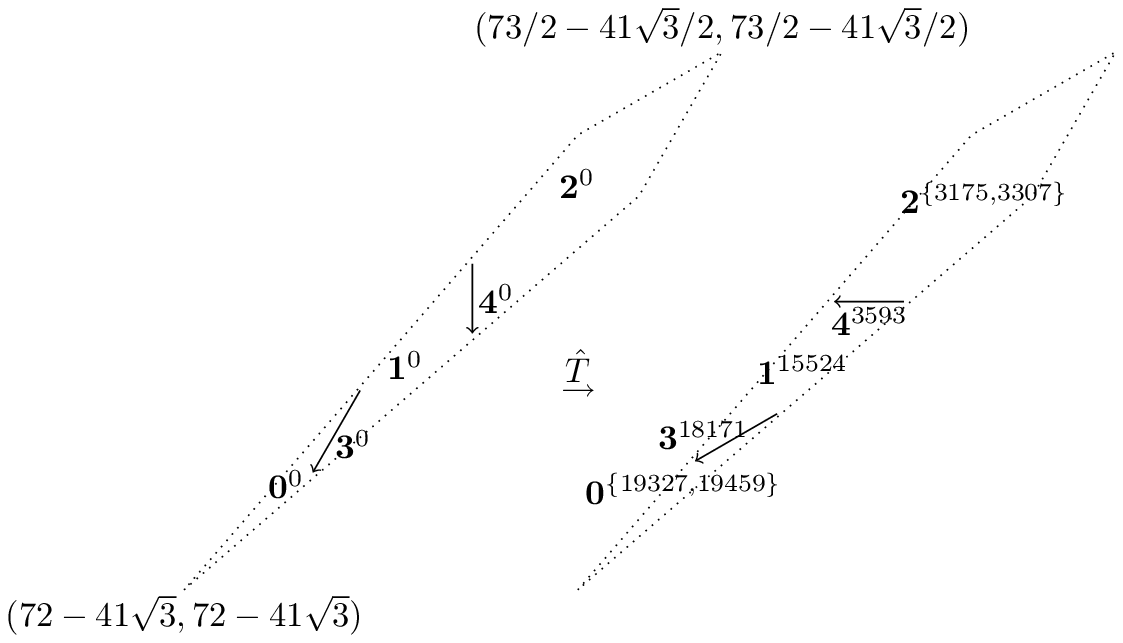}
\caption{\mbox{The first return map on $\mathcal D_1$ and $\mathcal D_2$ 
respectively, $\lambda=\sqrt3$.  ($\ell^k$ stands for $T^k(D_\ell)$.)}}
\label{figT7}
\end{figure}

The case $\lambda=\sqrt3$ is much more involved than the previous cases.
Therefore we show only that all points in $(\mathbb Z[\sqrt3]\cap[0,1))^2$ are 
periodic and refrain from calculating the period lengths. 
Furthermore we postpone the determination of $\hat T$ and $\mathcal R$ to 
Appendix~\ref{app7}.
Let 
$$
\mathcal D = \{(x,y):2x-\sqrt3y<2-\sqrt3,2y-\sqrt3x<2-\sqrt3,
y-\sqrt3x<195-113\sqrt3,x-\sqrt3y<195-113\sqrt3\}
$$
and $\mathcal D_1=\mathcal D\setminus\mathcal D_2$, where $\mathcal D_2$ is 
defined by the inequalities
$$
2x-\sqrt3y>267-154\sqrt3,\,2y-\sqrt3x>267-154\sqrt3,\,
y-\sqrt3x>98-57\sqrt3,\,x-\sqrt3y>98-57\sqrt3.
$$
The sets $\mathcal D_1$ and $\mathcal D_2$ have to be treated separately
because their trajectories are disjoint, and both sets contain aperiodic points.
The trajectories of aperiodic points in $\mathcal D_1$ come arbitrarily close 
to $(1,1)$, whereas $(72-41\sqrt3,72-41\sqrt3)$ is a limit point in 
$\mathcal D_2$.
(Note that $72-41\sqrt3=1-(\sqrt3+1)(2-\sqrt3)^4\approx0.9859$.)
The scaling maps are 
\begin{align*}
U_1(z) & =(2-\sqrt3)z+(\sqrt3-1,\sqrt3-1)=V_1^{-1}(\kappa V_1(z))\qquad
\mbox{for }z\in\mathcal D_1, \\ 
U_2(z) & =(2-\sqrt3)z+(113\sqrt3-95,113\sqrt3-195)=V_2^{-1}(\kappa V_2(z)) 
\qquad \mbox{for }z\in\mathcal D_2,
\end{align*}
with $\kappa=2-\sqrt3$, $V_1(z)=\big((1,1)-z\big)/\kappa^4$, 
$V_2(z)=\big(z-(72-41\sqrt3,72-41\sqrt3)\big)/\kappa^5$.
Then we have
\begin{align*}
V_1(\mathcal D)&=\{(x,y):2x>\sqrt3y,\,2y>\sqrt3x,\,x>\sqrt3y-2,\,y>\sqrt3x-2\},
\\ V_2(\mathcal D_2)&=
\{(x,y):2x>\sqrt3y,\,2y>\sqrt3x,\,x>\sqrt3y-2-\sqrt3,\,y>\sqrt3x-2-\sqrt3\}.
\end{align*}
The first return map $\hat T$ induces a partition of $\mathcal D_1$ into sets 
$D_0,\ldots,D_9$ and a partition of $\mathcal D_2$ into sets 
$D_{\mathbf 0},\ldots,D_{\mathbf 4}$, as in Figure~\ref{figT7}.
These sets are defined by the following (in)equalities:
\begin{gather*}
\begin{array}{c|c|c|c|c}
V_1(D_0) & V_1(D_1) & V_1(D_2) & V_1(D_3) & V_1(D_4) \\ \hline
x>\sqrt3y-1 & x>\sqrt3y-1 & 2x>\sqrt3y+\sqrt3-1 & 2y>\sqrt3x+\sqrt3-1 &
2y>\sqrt3x+\sqrt3-1 \\ & x<2 & x>2 & x>2,\ y<2\sqrt3-1 & y>2\sqrt3-1 
\end{array} \\
\begin{array}{c|c|c|c|c}
V_1(D_5) & V_1(D_6) & V_1(D_7) & V_1(D_8) & V_1(D_9) \\ \hline
x=\sqrt3y-1 & x=2 & y=2\sqrt3-1 & 2x=\sqrt3y+\sqrt3-1 & 2y=\sqrt3x+\sqrt3-1 \\ 
& & x<3-1/\sqrt3 & x>2 & x>2
\end{array} \\
\begin{array}{c|c|c|c|c}
V_2(D_{\mathbf 0}) & V_2(D_{\mathbf 1}) & V_2(D_{\mathbf 2}) & 
V_2(D_{\mathbf 3}) & V_2(D_{\mathbf 4}) \\ \hline
y>\sqrt3x-1 & y<\sqrt3x-1,\,x<\sqrt3+1 & x>\sqrt3+1 & y=\sqrt3x-1 & x=\sqrt3+1 
\end{array} 
\end{gather*}

The return times of $z\in D_\ell$ to $\mathcal D$ are given by the following 
tables. 
\begin{gather*}
\begin{array}{c|c|c|c|c|c|c|c|c|c}
D_0 & D_1 & D_2 & D_3 & D_4 & D_5 & D_6 & D_7 & D_8 & D_9 \\ \hline 1601,\,1733 
& 3175,\,3307 & 3230 & 7406 & 9771 & 3021 & 3593 & 9799 & 11473 & 7907
\end{array} \\
\begin{array}{c|c|c|c|c}
D_{\mathbf 0} & D_{\mathbf 1} & D_{\mathbf 2} & D_{\mathbf 3} & D_{\mathbf 4}\\
\hline 19459 & 15524 & 3175,\,3307 & 18171 & 3593 
\end{array} 
\end{gather*}
Note that the return times are not constant on all $D_\ell$.
E.g., the return time for $z\in D_0$ is $1601$ if $V_1(z)=(1,y)$ and $1733$ 
else, see Appendix~\ref{app7} for details.
Since we do not calculate the period lengths, it is not necessary to 
distinguish between the parts of $D_\ell$ with different period lengths. 

\begin{figure}
\includegraphics[scale=.95]{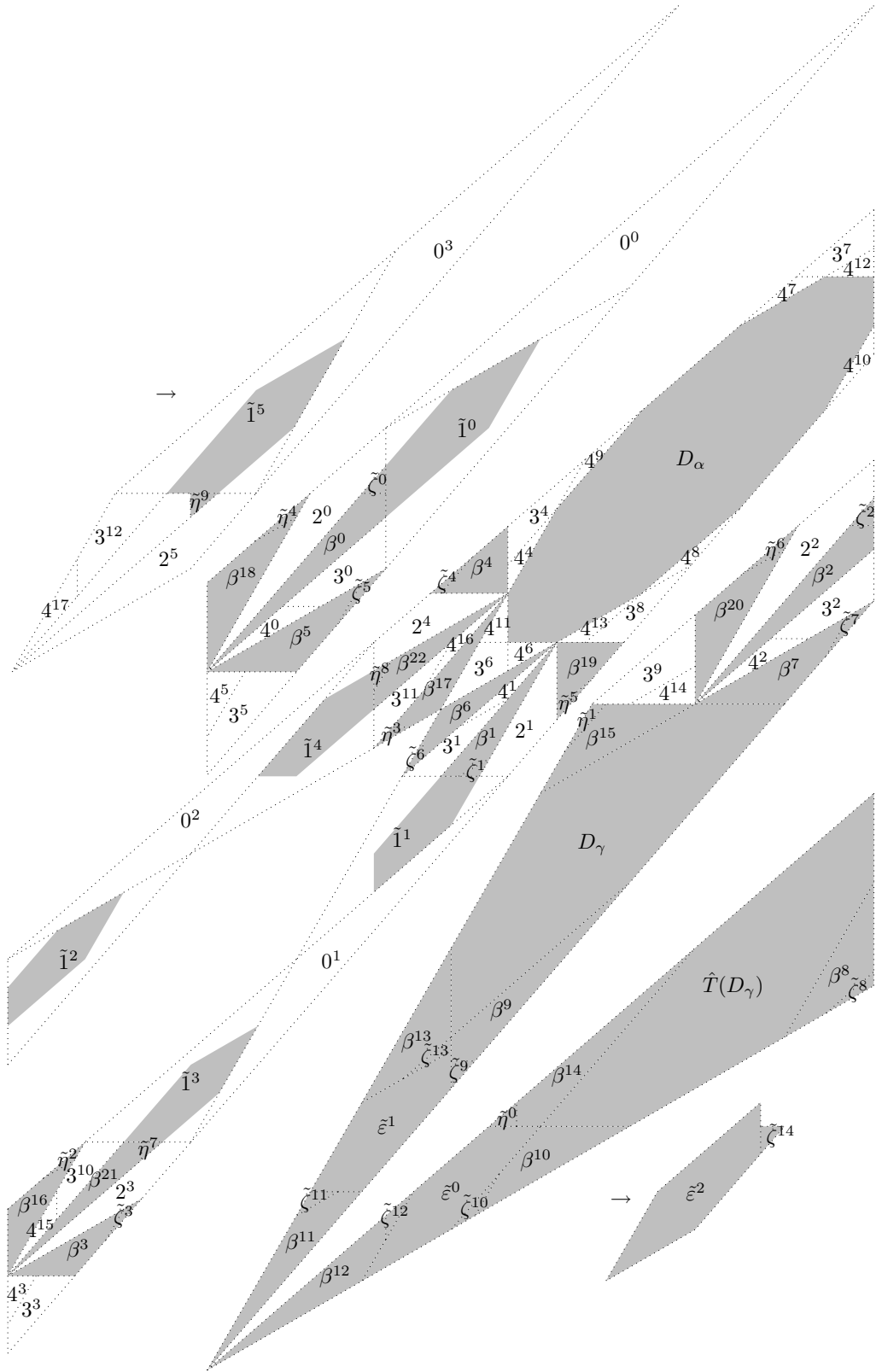}
\caption{The trajectory of the open scaled sets in $\mathcal D_1$ and the set
$\mathcal P_1$, $\lambda=\sqrt3$. \newline ($\ell^k$ stands for 
$\hat T^kU_1(D_\ell)$ if $\ell\in\{0,\tilde1,2,3,4\}$, for $\hat T^k(D_\ell)$ 
else.)} \label{figP71}
\end{figure}

\begin{figure}
\includegraphics[scale=.95]{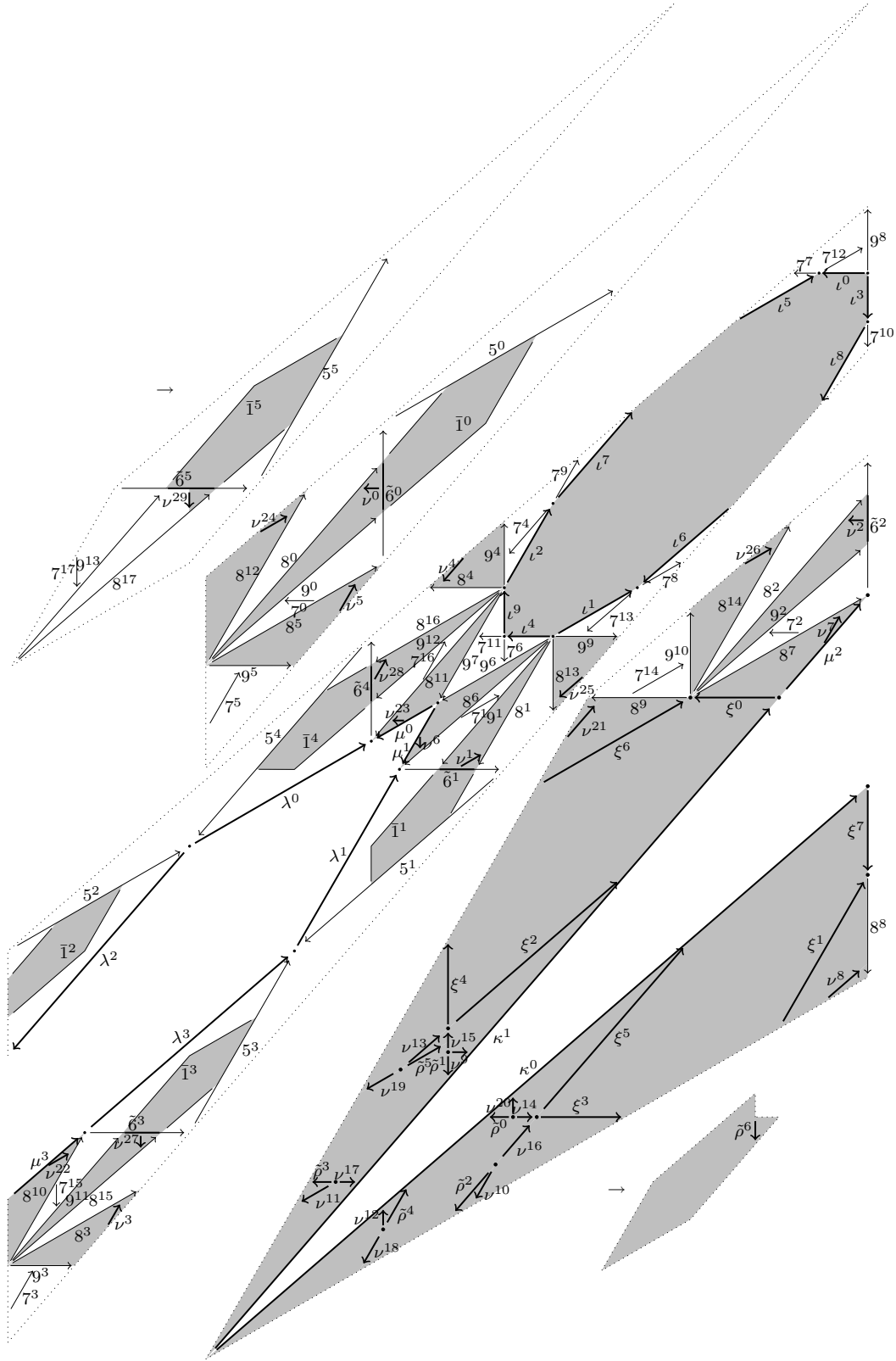}
\caption{The trajectory of the scaled lines and the set $\mathcal P_1$, 
$\lambda=\sqrt3$. \newline ($\ell^k$ stands for $\hat T^kU_1(D_\ell)$ if 
$\ell\in\{\bar1,5,\tilde6,7,8,9\}$, for $\hat T^k(D_\ell)$ else.)} 
\label{figP72}
\end{figure}

\begin{figure}
\includegraphics{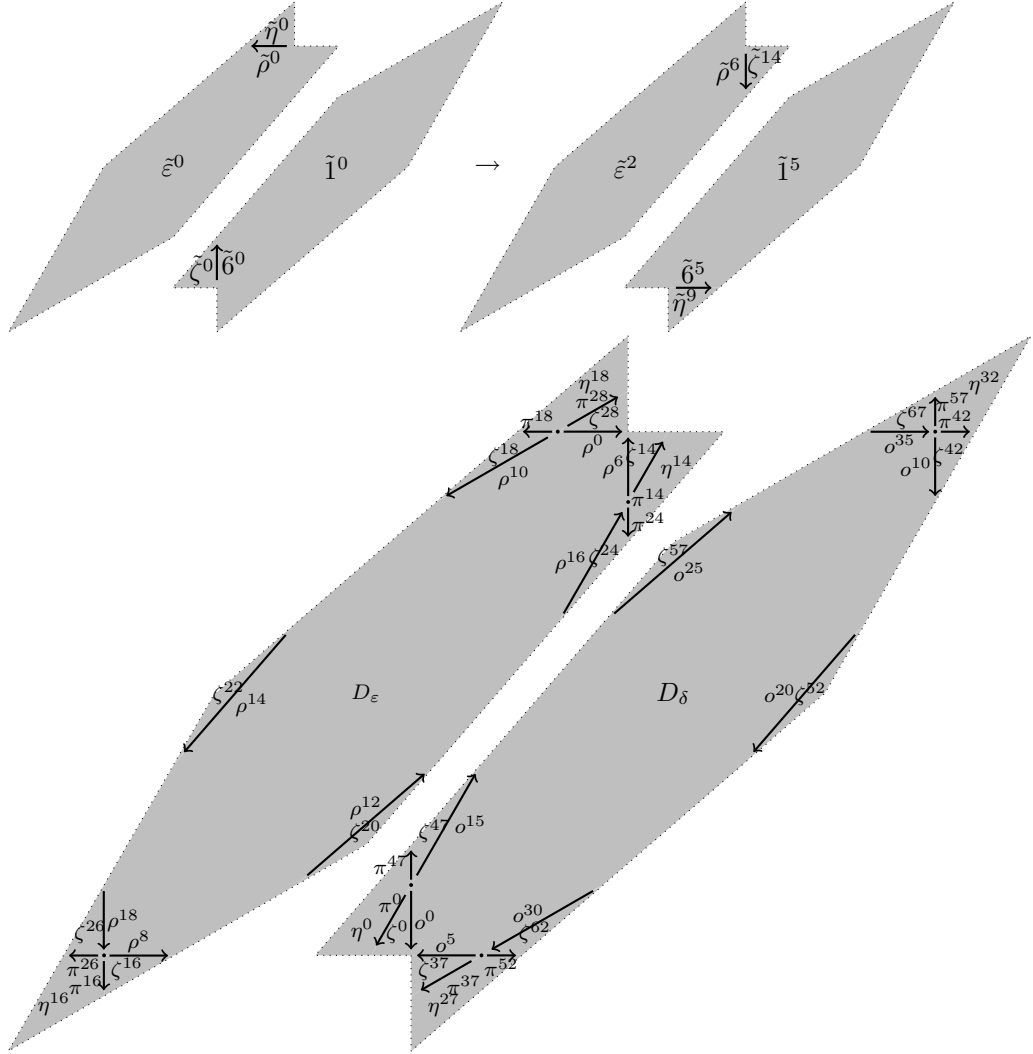}
\caption{Small parts of $\mathcal P_1$, $\lambda=\sqrt3$. ($\ell^k$ stands for 
$\hat T^k(D_\ell)$ for $\ell\not\in\{\tilde 1,\tilde 6\}$.)} \label{figP73}
\end{figure}

\subsection{The scaling domain $\mathcal D_1$.}
Figure~\ref{figP71} shows the trajectory of the open scaled sets in 
$\mathcal D_1$.
Here, $V_1(\mathcal D_1)$ is split up into the three stripes $x<\sqrt3-1$,
$\sqrt3-1<x<2$ and $x>2$, and $D_{\tilde1}$ denotes the set given by 
$V_1(D_{\tilde1})=\{(x,y)\in V_1(\mathcal D):x>\sqrt3y-1,x<2\}$.
We see that 
$$
\sigma_1: 
\begin{array}{rclrclrclrcl}
0 & \mapsto & 010 & 3 & \mapsto & 012100001210 & 5 & \mapsto & 01510 &
7 & \mapsto & 01210000500001210 \\
1 & \mapsto & 01110 & 4 & \mapsto & 01210000000001210 & 6 & \mapsto & 01610 &
8 & \mapsto & 01210012621001210 \\
2 & \mapsto & 01210 & & & & & & & 9 & \mapsto & 0121005001210
\end{array}
$$
codes the trajectory of $U_1(D_\ell)$, $\ell\in\{0,1,2,3,4\}$, with
$\hat T^{|\sigma_1(\ell)|}U_1(z)=U_1\hat T(z)$ for $z\in D_\ell$.
All points in $D_\alpha$, $D_\beta$ and $D_\gamma$ are periodic.
Figure~\ref{figP73} shows that 
$D_{\tilde\varepsilon},D_{\tilde\zeta},D_{\tilde\eta}$ and the grey part of
$U_1(\mathcal D_{\tilde1})$ split up further, but all their points are periodic 
as well.

The trajectory of the scaled lines is depicted in Figure~\ref{figP72}, where 
again $V_1(\mathcal D_1)$ is split up into the stripes $x<\sqrt3-1$,
$\sqrt3-1\le x<2$ and $x\ge2$.
Here, $D_{\bar1}$ denotes boundary lines of $D_1$, and $D_{\tilde6}$ is given 
by $V_1(D_{\tilde6})=\{(2,y)\in V_1(\mathcal D)\}$.
We see that $\sigma_1$ codes the trajectory of $U_1(D_\ell)$,
$\ell\in\{5,6,7,8,9\}$, as well and satisfies the conditions in 
Section~\ref{sectgeneral} (with respect to $\mathcal D_1$). 
All points in $D_\iota,D_\kappa,D_\lambda,D_\mu,D_\nu,D_\xi,D_o,D_\pi,D_\rho$ 
(and their orbits) are periodic.
The finitely many remaining points in $\mathcal P_1=\{z\in\mathcal D_1:
\hat T^m(z)\not\in U_1(\mathcal D_1)\mbox{ for all }m\in\mathbb Z\}$ are 
clearly periodic as well.
Since $|\sigma_1^n(\ell)|\to\infty$ for all $\ell\in\{0,\ldots,9\}$, we can
use Proposition~\ref{propaper} to show the following proposition.

\begin{proposition}\label{propper71}
$\pi(z)$ is finite for all $z\in\mathbb Z[\sqrt3]^2\cap\mathcal D_1$, but
$\pi(V_1^{-1}(\sqrt3+1/4,7/4))=\infty$.
\end{proposition}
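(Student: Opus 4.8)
The plan is to apply the general reduction of Section~\ref{sectgeneral} with $V=V_1$, $U=U_1$, $\kappa=2-\sqrt3$, the substitution $\sigma_1$, and the domain $\mathcal D_1$ in place of $\mathcal D$. As noted just before the statement, $|\sigma_1^n(\ell)|\to\infty$ for every $\ell\in\{0,\ldots,9\}$ and every point of $\mathcal P_1$ is periodic, so the hypotheses of Theorem~\ref{thmperiodic} hold and Proposition~\ref{propaper} is available (with $Q=1$). Since Proposition~\ref{propaper} concerns $z\notin\mathcal R$ and the points of $\mathcal R$ are periodic by the preceding analysis, it then suffices to (i) compute the constant $\delta$ for this setup, and (ii) verify that each of the finitely many points $z\in\mathbb Z[\sqrt3]^2\cap\mathcal D_1$ with $\|V_1(z)'\|_\infty\le\delta$ is periodic.

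For (i) I would read off, from the first-hitting data assembled in Appendix~\ref{app7}, the finite list of displacements $t(z)=V_1(T^{s(z)}(z))-V_1(z)A^{s(z)}$ occurring for $z\in\mathcal D_1\setminus(U_1(\mathcal D_1)\cup\mathcal P_1)$. Since $\kappa'=2+\sqrt3$ gives $|\kappa'|-1=1+\sqrt3$, and since $A^{12}=A^0$ leaves only twelve powers $A^h$ to inspect, the value $\delta=\max\{\|(t(z)A^h)'\|_\infty\}/(1+\sqrt3)$ is then a finite computation.

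For (ii) I would use that $z\in\mathbb Z[\sqrt3]^2$ forces $V_1(z)\in\mathbb Z[\sqrt3]^2$, because $\kappa$ is a unit. The region $V_1(\mathcal D)=\{(u,v):2u>\sqrt3v,\ 2v>\sqrt3u,\ u>\sqrt3v-2,\ v>\sqrt3u-2\}$ is the bounded quadrilateral with vertices $(0,0)$, $(2,\tfrac{4\sqrt3}3)$, $(1+\sqrt3,1+\sqrt3)$, $(\tfrac{4\sqrt3}3,2)$, and hence lies in $[0,3)^2$. Writing $u=a+b\sqrt3$, the bounds $0<u<3$ and $|u'|=|a-b\sqrt3|\le\delta$ give $|2a|\le|u|+|u'|$ and $|2b\sqrt3|\le|u|+|u'|$, confining $(a,b)$ to a finite set, and likewise for $v$; thus only finitely many $w=V_1(z)\in\mathbb Z[\sqrt3]^2$ satisfy both $w\in V_1(\mathcal D_1)$ and $\|w'\|_\infty\le\delta$. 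For each such $w$ I would iterate the conjugate-free recursion $V_1S(z)=(V_1(z)A^{s(z)}+t(z))/\kappa$ until the orbit meets $V_1(\mathcal P_1)$; by Theorem~\ref{thmperiodic} this proves $\pi(z)<\infty$ and establishes the first assertion.

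For the aperiodic point I would take $w_0=V_1(z)=(\sqrt3+\tfrac14,\tfrac74)$, first checking $w_0\in V_1(\mathcal D_1)$, and then compute the sequence $w_n=V_1S^n(z)$ in $(\tfrac14\mathbb Z[\sqrt3])^2$ by the same recursion. The goal is to exhibit a finite cycle $w_0\mapsto w_1\mapsto\cdots\mapsto w_0$ none of whose members lies in $V_1(\mathcal P_1)$, so that $S^n(z)\in\mathcal D_1\setminus\mathcal P_1$ for all $n\ge0$ and $\pi(z)=\infty$ follows from Theorem~\ref{thmperiodic}. The main obstacle throughout is the size and irregularity of the $\sqrt3$ data: unlike the earlier cases with only two or three return branches, here the first-return map splits $\mathcal D_1$ into ten pieces $D_0,\ldots,D_9$ with return times in the thousands that are not even constant on each piece, so assembling the complete table of $s(z)$ and $t(z)$—and then tracking each candidate orbit through it—is where the real work lies.
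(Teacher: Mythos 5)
Your proposal is correct in outline and follows the paper's overall strategy (Proposition~\ref{propaper} together with Theorem~\ref{thmperiodic}, a finite check of integer points with small conjugate norm, and an explicit $S$-cycle through $V_1^{-1}(\sqrt3+1/4,7/4)$), but it diverges from the paper at the one step that makes the $\sqrt3$ case tractable. You take the maximum defining $\delta$ over \emph{all} $z\in\mathcal D_1\setminus(U_1(\mathcal D_1)\cup\mathcal P_1)$; the paper instead first proves that only $D_0$ and $D_1$ can carry aperiodic points: $D_3,D_4,D_7,D_8,D_9$ lie in $\mathcal P_1$, while for $D_2$ the possibly aperiodic part is nested inside $\hat T^2U_1(D_2)$ and, iterating, shrinks to the single point $V_1^{-1}(2,\sqrt3)\notin D_2$, with a similar argument for $D_5$ and $D_6$. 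Since the $\delta$ of Proposition~\ref{propaper} is a maximum restricted to points with $\pi(z)=\infty$, this reduction means only the four displacements $t(z)\in\{(\sqrt3,2),\,(2,\sqrt3),\,(1-\sqrt3)(\sqrt3,2),\,(1-\sqrt3)(2,\sqrt3)\}$ along the return paths of $U_1(D_0\cup D_1)$ enter the computation, giving $\delta_1=(\sqrt3+1)\cdot 2/(\sqrt3+1)=2$, whereupon the integer check collapses to the single candidate $(1,1)\in V_1(D_\alpha)$, which is periodic. Your unrestricted maximum is still a valid upper bound for $\delta$, so there is no logical gap in your argument; but it forces you to tabulate $t(z)$ along all ten return branches (with their non-constant, thousands-long return times) and to test a larger candidate set---exactly the ``real work'' you flag at the end, and which the paper deliberately engineers away. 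The remaining ingredients match the paper: the recursion $V_1S(z)=\big(V_1(z)A^{s(z)}+t(z)\big)/\kappa$ with the unit $\kappa=2-\sqrt3$ preserving $\mathbb Z[\sqrt3]^2$, your finiteness argument via $0<u<3$ and $|u'|\le\delta$ (your vertex computation for $V_1(\mathcal D)$ is accurate), and the aperiodic point exhibited as a $4$-cycle of $S$ avoiding $\mathcal P_1$, which the paper verifies by computing $V_1S^4(z)=V_1(z)$ explicitly.
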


\begin{proof}
First we show that only $D_0$ and $D_1$ contain aperiodic points: 
$D_3,D_4,D_7,D_8,D_9$ lie in~$\mathcal P_1$.
The only part of $D_2$ which is not in $\mathcal P_1$ or 
$\hat T^mU_1(\mathcal P_1)$, lies in $\hat T^2U_1(D_2)$. 
By iterating this argument on $\hat T^2U_1(D_2)$, the possible set of aperiodic 
points in $D_2$ becomes smaller and smaller, and converges to 
$V_1^{-1}(2,\sqrt3)\not\in D_2$.
A similar reasoning shows that all points in $D_5$ and $D_6$ are periodic.
Therefore it is sufficient to determine $t(z)$ for points in the trajectories 
of $U_1(D_0\cup D_1)$.
\begin{align*}
z\in\hat TU_1(D_0)\cup\hat T^3U_1(D_1):\ & \hat s(z)=2,\, 
s(z)\equiv 0\bmod 12,\, t(z)=(1-\sqrt3)(\sqrt3,2) \\
z\in\hat T^4U_1(D_1):\ & \hat s(z)=1,\,s(z)\equiv 5\bmod 12,\,
t(z)=V_1(\hat T(z))-V_1(z)A^5=(\sqrt3,2) \\
z\in\hat TU_1(D_1):\ & \hat s(z)=-1,\,s(z)\equiv-5\bmod 12,\, t(z)=(2,\sqrt3)\\
z\in\hat T^2U_1(D_0)\cup\hat T^2U_1(D_1):\ & \hat s(z)=-2,\,
s(z)\equiv 0\bmod 12,\, t(z)=(1-\sqrt3)(2,\sqrt3)
\end{align*}
We have $\delta_1=(\sqrt3+1)2/(\sqrt3+1)=2$ since
$$
\{(\sqrt3,2)A^h:\,h\in\mathbb Z\}=\pm
\{(\sqrt3,2),\,(2,\sqrt3),\,(\sqrt3,1),\,(1,0),\,(0,1),\,(1,\sqrt3)\}.
$$
The only point $z\in V_1(\mathbb Z[\sqrt3]^2\cap\mathcal D_1)$ with 
$\|z'\|_\infty\le2$ is $(1,1)\in V_1(D_\alpha)$.

If $V_1(z)=(\sqrt3+1/4,7/4)$, then we have
\begin{align*}
V_1S(z) & = (2+\sqrt3)\big(V_1(z)+(1-\sqrt3)(2,\sqrt3)\big)=
(3/2+\sqrt3/4,3\sqrt3/4+1/2), \\
V_1S^2(z) & = (2+\sqrt3)\big(V_1S(z)+(1-\sqrt3)(2,\sqrt3)\big)=
(7/4,\sqrt3+1/4),
\end{align*}
$V_1S^3(z)=(2+\sqrt3)\big(V_1S^2(z)+(1-\sqrt3)(\sqrt3,2)\big)=
(3\sqrt3/4+1/2,3/2+\sqrt3/4)$, $V_1S^4(z)=V_1(z)$.
\end{proof}

\noindent{\it Remark.}
The primitive part of $\sigma_1$ is again $0\mapsto 010$, $1\mapsto 01110$.

\begin{figure}
\includegraphics{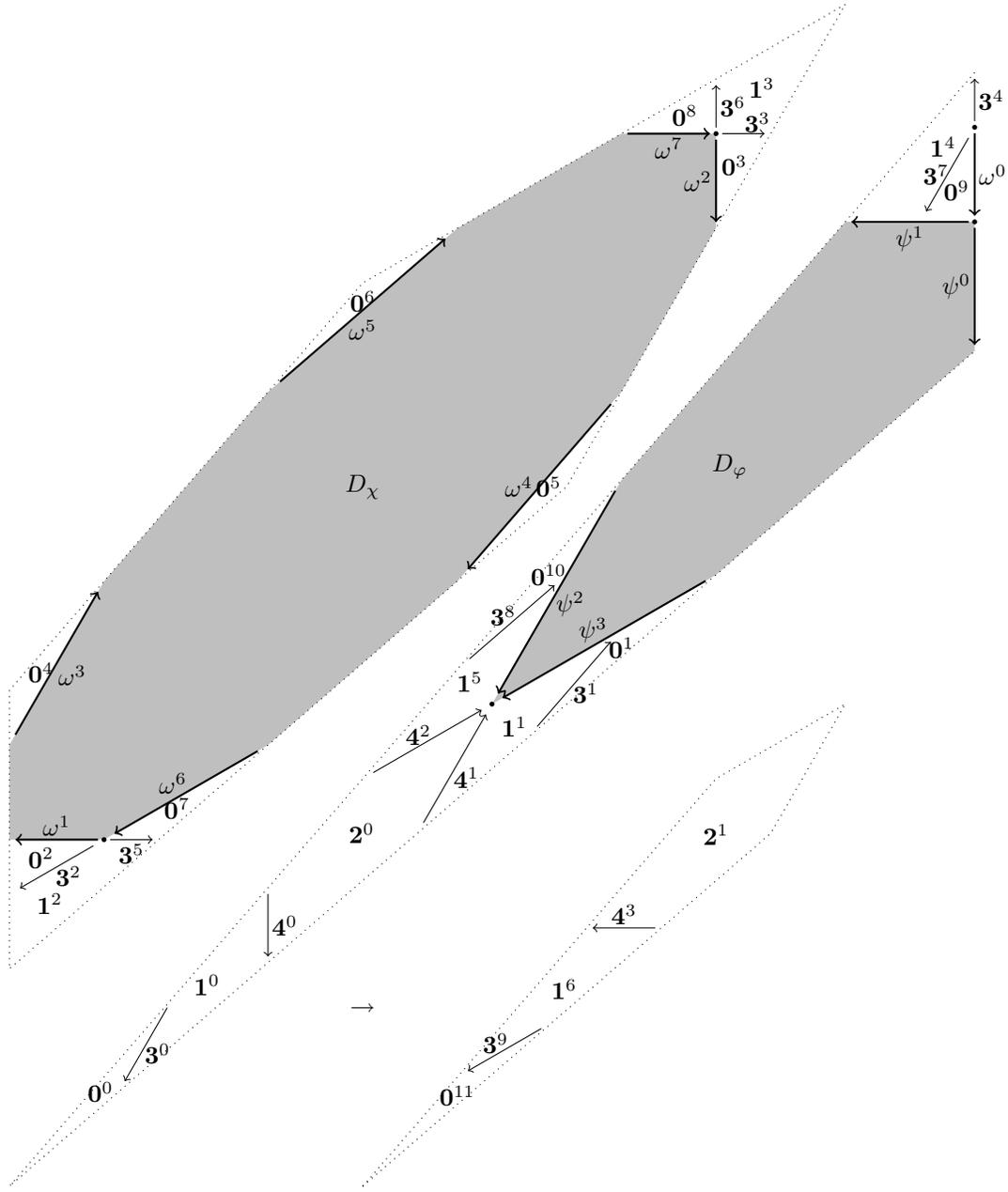}
\caption{The trajectory of the scaled domains in $\mathcal D_2$ and the set
$\mathcal P_2$, $\lambda=\sqrt3$. \newline ($\ell^k$ stands for 
$\hat T^k(D_\ell)$ if $\ell\in\{\psi,\omega\}$, for $\hat T^kU_2(D_\ell)$ 
else.)} \label{figP74}
\end{figure}

\subsection{The scaling domain $\mathcal D_2$}
Figure~\ref{figP74} shows the trajectory of the the scaled domains in 
$\mathcal D_2$.
Here, $V_2(\mathcal D_2)$ is split up into $x\le\sqrt3+1$ and $x>\sqrt3+1$.
With $\varepsilon_2=1$ and 
$$
\sigma_2: 
\begin{array}{rclrcl}
\mathbf 0 & \mapsto & \mathbf 0\mathbf 1\mathbf 2\mathbf 2 \mathbf 2\mathbf 2
\mathbf 2\mathbf 2\mathbf 2\mathbf 1\mathbf 0 & \mathbf 3 & \mapsto & \mathbf 0
\mathbf 1\mathbf 2 \mathbf 2\mathbf 4\mathbf 2\mathbf 2\mathbf 1\mathbf 0 \\
\mathbf 1 & \mapsto & \mathbf 0\mathbf 1\mathbf 2\mathbf 2\mathbf 1\mathbf 0 &
\mathbf 4 & \mapsto & \mathbf 0\mathbf 3\mathbf 0 \\ 
\mathbf 2 & \mapsto & \mathbf 0
\end{array}
$$
the conditions in Section~\ref{sectgeneral} are satisfied.
The set $\mathcal P_2=\{z\in\mathcal D_2:\hat T^m(z)\not\in U_2(\mathcal D_2)
\mbox{ for all }m\in\mathbb Z\}$ consists of the orbits of 
$D_\varphi,D_\chi,D_\psi,D_\omega$ and several isolated (periodic) points. 
Since $|\sigma_2^n(\ell)|\to\infty$ for all 
$\ell\in\{\mathbf 0,\mathbf 1,\mathbf 2,\mathbf 3,\mathbf 4\}$, we can
use Proposition~\ref{propaper} to show the following proposition.

\begin{proposition}\label{propper72}
$\pi(z)$ is finite for all $z\in\mathbb Z[\sqrt3]^2\cap\mathcal D_2$, but
$\pi(V_2^{-1}(5/7,3\sqrt3/7))=\infty$.
\end{proposition}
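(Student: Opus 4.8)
The plan is to mirror the proof of Proposition~\ref{propper71}, now carried out with respect to the self-inducing domain $\mathcal D_2$, its scaling data $\kappa=2-\sqrt3$, $V_2$, $U_2$, and the substitution $\sigma_2$. The two assertions are handled separately: finiteness of $\pi$ on $\mathbb Z[\sqrt3]^2\cap\mathcal D_2$ comes from Proposition~\ref{propaper} (the case $Q=1$), and the aperiodic point is produced by an explicit $S$-orbit computation.

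First I would isolate which of $D_{\mathbf 0},\dots,D_{\mathbf 4}$ can carry aperiodic orbits. Since $D_{\mathbf 3}$ and $D_{\mathbf 4}$ are the boundary lines $y=\sqrt3x-1$ and $x=\sqrt3+1$ of $V_2(\mathcal D_2)$, I expect them to lie entirely in $\mathcal P_2$, just as the analogous lines $D_5,\dots,D_9$ did in the first part. For the outer cell $D_{\mathbf 2}$ (where $x>\sqrt3+1$) I would argue exactly as for $\hat T^2U_1(D_2)$ in Proposition~\ref{propper71}: the part of $D_{\mathbf 2}$ lying in neither $\mathcal P_2$ nor an orbit $\hat T^mU_2(\mathcal P_2)$ sits inside a single cell of the induced partition, and iterating the inducing step on that cell shrinks the candidate set to a limit point that does not belong to $D_{\mathbf 2}$. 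Hence only $D_{\mathbf 0}$ and $D_{\mathbf 1}$ carry aperiodicity, and it suffices to track the digits $t(z)=V_2\big(T^{s(z)}(z)\big)-V_2(z)A^{s(z)}$ along the $\hat T$-orbits of $U_2(D_{\mathbf 0}\cup D_{\mathbf 1})$.

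Second, I would read off the finitely many digit values, using the symmetry between $T$ and $T^{-1}$ and of $\mathcal D_2$ to pair the shifts $\hat s(z)\in\{-2,-1,1,2\}$. Since $A$ and $\kappa$ are unchanged from the $\mathcal D_1$ case, I expect the digit set $\pm\{(\sqrt3,2),(2,\sqrt3),(1-\sqrt3)(\sqrt3,2),(1-\sqrt3)(2,\sqrt3)\}$ and, via the formula of Proposition~\ref{propaper} with denominator $|\kappa'|-1=1+\sqrt3$, the value $\delta_2=2$. Now $V_2$ carries $\mathbb Z[\sqrt3]^2\cap\mathcal D_2$ bijectively onto $\mathbb Z[\sqrt3]^2\cap V_2(\mathcal D_2)$, because $\kappa$ is a unit of $\mathbb Z[\sqrt3]$ and the translation vector defining $V_2$ lies in $\mathbb Z[\sqrt3]^2$. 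As $V_2(\mathcal D_2)$ is a bounded polygon and the conjugate is constrained by $\|w'\|_\infty\le\delta_2$, there remain only finitely many lattice points $w\in\mathbb Z[\sqrt3]^2\cap V_2(\mathcal D_2)$ to inspect, and I would check that each lies in $\mathcal P_2$ or in a $D_{\mathbf i}$ already shown to be periodic; by Proposition~\ref{propaper} this yields finiteness of $\pi$ on $\mathbb Z[\sqrt3]^2\cap\mathcal D_2$.

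Finally, for the aperiodic example $z=V_2^{-1}(5/7,3\sqrt3/7)$, which lies in $D_{\mathbf 0}$, I would compute its $S$-orbit directly from $V_2S(z)=(2+\sqrt3)\big(V_2(z)A^{s(z)}+t(z)\big)$, inserting at each step the appropriate digit and checking by hand that $V_2S^p(z)=V_2(z)$ after finitely many steps. This forces $S^nR(z)\in\mathcal D_2\setminus\mathcal P_2$ for all $n\ge0$, whence $\pi(z)=\infty$ by Theorem~\ref{thmperiodic}. The main obstacle is the very first step: rigorously certifying, from the geometry of Figure~\ref{figP74}, that $D_{\mathbf 2},D_{\mathbf 3},D_{\mathbf 4}$ contribute no aperiodic points and that the shrinking argument for $D_{\mathbf 2}$ converges to an excluded point. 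Once the aperiodic part is pinned down to $D_{\mathbf 0}\cup D_{\mathbf 1}$, the digit bookkeeping, the finite lattice-point check, and the orbit computation are all routine by-hand calculations.
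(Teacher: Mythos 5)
Your overall architecture is indeed the paper's: exclude the line cells, compute the digits $t(z)$ along the intermediate iterates of the scaled cells, invoke Proposition~\ref{propaper} for a finite lattice check, and exhibit an explicit periodic $S$-orbit. But two of your concrete steps fail. First, your claim that ``only $D_{\mathbf 0}$ and $D_{\mathbf 1}$ carry aperiodicity'' is false, and the shrinking argument you transplant from $D_2\subset\mathcal D_1$ cannot work for $D_{\mathbf 2}$: unlike the $\mathcal D_1$ case, where the letter $2$ occurs in neither $\sigma_1(0)$ nor $\sigma_1(1)$, the letter $\mathbf 2$ occurs seven times in $\sigma_2(\mathbf 0)$ and twice in $\sigma_2(\mathbf 1)$, so $\mathbf 2$ belongs to the primitive part of $\sigma_2$ (cf.\ the paper's closing remark that the primitive part lives on $\{\mathbf 0,\mathbf 1,\mathbf 2\}$ with eigenvalues $5,-2,1$), and every aperiodic orbit visits $D_{\mathbf 2}$; indeed $\hat T^mU_2(D_{\mathbf 0})\subseteq D_{\mathbf 2}$ for $2\le m\le 8$. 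The paper excludes only $D_{\mathbf 3}$ and $D_{\mathbf 4}$. What is true --- and what you need instead --- is that $U_2(D_{\mathbf 2})$ contributes no digits because $|\sigma_2(\mathbf 2)|=1$: its return to $U_2(\mathcal D_2)$ is immediate, so the digit bookkeeping reduces to the trajectories of $U_2(D_{\mathbf 0}\cup D_{\mathbf 1})$ for that reason, not because $D_{\mathbf 2}$ is aperiodicity-free.

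Second, you import the $\mathcal D_1$ digit data wholesale, and it is wrong here. Since $|\sigma_2(\mathbf 0)|=11$ and $|\sigma_2(\mathbf 1)|=6$, the shifts $\hat s(z)$ range over $\pm1,\dots,\pm5$, not $\pm1,\pm2$, and the actual digits are $(2,\sqrt3)$, $(1-\sqrt3,\sqrt3-1)$, $(1-\sqrt3,-3)$, $\sqrt3(\sqrt3,2)$, $-2(\sqrt3,2)$ together with their symmetric counterparts, giving $\delta_2=4/(\sqrt3+1)=2(\sqrt3-1)$ rather than your $\delta_2=2$. Your expectation that every surviving lattice point lies in $\mathcal P_2$ or an already-excluded cell also does not match the facts: even with the correct (smaller) $\delta_2$ there remain exactly four points $(1,1)$, $(1+\sqrt3,1+\sqrt3)$, $(2+\sqrt3,2+\sqrt3)$, $(3+\sqrt3,3+\sqrt3)$ in $V_2(\mathbb Z[\sqrt3]^2\cap\mathcal D_2)$, which the paper identifies individually (centers of $V_2U_2(D_\chi)$, $D_{\mathbf 4}$, $D_\chi$, and a fixed point of $\hat T^3$, respectively); with your larger $\delta_2=2$, additional coordinates such as $2$ and $\sqrt3$ would enter the check. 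Most concretely, the aperiodic example itself refutes your digit set: the paper's verification is $V_2S(z)=(2+\sqrt3)V_2(z)$ and $V_2S^2(z)=(2+\sqrt3)\big(V_2S(z)A^3+(1-\sqrt3,\sqrt3-1)\big)=V_2(z)$, an $S$-period of $2$ using the digit $(1-\sqrt3,\sqrt3-1)$, which does not occur in your proposed set $\pm\{(\sqrt3,2),(2,\sqrt3),(1-\sqrt3)(\sqrt3,2),(1-\sqrt3)(2,\sqrt3)\}$; your planned computation could therefore never close up. The plan is the right plan, but all of its data --- the cells with aperiodic points, the values $\hat s(z)$, $t(z)$, $\delta_2$, and the surviving lattice points --- must be recomputed from Figure~\ref{figP74} rather than borrowed from the $\mathcal D_1$ case.
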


\begin{proof}
Similarly to $\mathcal D_1$, we see that all points in $D_{\mathbf 3}$ and
$D_{\mathbf 4}$ are periodic.
Choose $\hat s(z)$ as follows:
\begin{align*}
z\in\hat T^{10}U_2(D_{\mathbf 0})\cup\hat T^5U_2(D_{\mathbf 1}):\ & 
\hat s(z)=1,\, s(z)\equiv 7\bmod 12,\, t(z)=(2,\sqrt3) \\
z\in\hat T^9U_2(D_{\mathbf 0})\cup\hat T^4U_2(D_{\mathbf 1}):\ & \hat s(z)=2,\, 
s(z)\equiv 3\bmod 12,\, t(z)=(1-\sqrt3,\sqrt3-1) \\
z\in\hat T^8U_2(D_{\mathbf 0})\cup\hat T^3U_2(D_{\mathbf 1}):\ & \hat s(z)=3,\, 
s(z)\equiv 10\bmod 12,\, t(z)=(1-\sqrt3,-3) \\
z\in\hat T^7U_2(D_{\mathbf 0}):\ & \hat s(z)=4,\, s(z)\equiv 5\bmod 12,\, 
t(z)=\sqrt3(\sqrt3,2) \\
z\in\hat T^6U_2(D_{\mathbf 0}):\ & \hat s(z)=5,\, s(z)\equiv 0\bmod 12,\, 
t(z)=-2(\sqrt3,2) 
\end{align*}
For the remaining $z\in\hat T^mU_2(D_{\mathbf 0}\cup D_{\mathbf 1})$, 
$\hat s(z),s(z)$ and $t(z)$ are obtained by symmetry.
The sets $\{(1-\sqrt3,\sqrt3-1)A^h:h\in\mathbb Z\}$ and 
$\{(\sqrt3-1,3)A^h:h\in\mathbb Z\}$ are
\begin{gather*}
\pm\{(1-\sqrt3,\sqrt3-1),(\sqrt3-1,2),(2,\sqrt3+1),(\sqrt3+1,\sqrt3+1),
(\sqrt3+1,2),(2,\sqrt3-1)\}, \\
\pm\{(\sqrt3-1,3),(3,2\sqrt3+1),
(2\sqrt3+1,3+\sqrt3),(3+\sqrt3,2+\sqrt3),(2+\sqrt3,\sqrt3),(\sqrt3,1-\sqrt3)\},
\end{gather*}
hence $\delta_2=4/(\sqrt3+1)=2(\sqrt3-1)$.
The only $x\in\mathbb Z[\sqrt3]$ with $0<x<5$ and $|x'|\le2(\sqrt3-1)$ are 
$1,1+\sqrt3,2+\sqrt3$ and $3+\sqrt3$.
Therefore the only $z\in V_2(\mathbb Z[\sqrt3]^2\cap\mathcal D_2)$ with 
$\|z'\|_\infty\le2(\sqrt3-1)$ are $(1,1)$, the center of 
$V_2U_2(D_\chi)$, $(1+\sqrt3,1+\sqrt3)$, the center of $D_{\mathbf 4}$, 
$(2+\sqrt3,2+\sqrt3)$, the center of $D_\chi$, and $(3+\sqrt3,3+\sqrt3)$, a 
fixed point of $\hat T^3$.

If $V_2(z)=(5/7,3\sqrt3/7)$, then we have $V_2S(z)=(2+\sqrt3)V_2(z)$ and \\
$V_2S^2(z)=(2+\sqrt3)\big(V_2S(z)A^3+(1-\sqrt3,\sqrt3-1)\big)=
(5/7,3\sqrt3/7)=V_2(z)$.
\end{proof}

By combining Propositions~\ref{propper71} and~\ref{propper72} and the fact that
all points in $\mathcal R$ are periodic (see Appendix~\ref{app7}), we obtain 
the following theorem.

\begin{theorem}
Conjecture~\ref{cj} holds for $\lambda=\sqrt3$.
\end{theorem}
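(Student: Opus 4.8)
The plan is to assemble the statement from Propositions~\ref{propper71} and~\ref{propper72} together with the behaviour on $\mathcal R$, using the reduction of Conjecture~\ref{cj} to a statement about the map $T$. First I recall from the introduction that, setting $x=\{\sqrt3\,a_{k-1}\}$ and $y=\{\sqrt3\,a_k\}$, the sequence $(a_k)_{k\in\mathbb Z}$ satisfying (\ref{e1app}) is periodic if and only if the orbit $(T^k(z))_{k\in\mathbb Z}$ of $z=(x,y)$ is periodic, and that $z\in(\mathbb Z[\sqrt3]\cap[0,1))^2$. Hence it suffices to prove that $\pi(z)<\infty$ for every $z\in(\mathbb Z[\sqrt3]\cap[0,1))^2$. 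A preliminary observation that I would record is that, by the explicit form (\ref{eqTAgen}) with $\lambda'=-\sqrt3$, both $T$ and $T^{-1}$ map $(\mathbb Z[\sqrt3]\cap[0,1))^2$ into itself; consequently the whole $T$-orbit of such a point stays in this set, and $\pi$ is constant along $T$-orbits, i.e. $\pi(z)=\pi(T^m(z))$ for every $m\in\mathbb Z$ (because $T$ is a bijection of $[0,1)^2$).

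Next I would split according to the first-hitting structure of $\mathcal D=\mathcal D_1\cup\mathcal D_2$ introduced in this section. Fix $z\in(\mathbb Z[\sqrt3]\cap[0,1))^2$. If $z\in\mathcal R$, then $z$ is periodic by the analysis of $\mathcal R$ carried out in Appendix~\ref{app7}. Otherwise $z\notin\mathcal R$, so by the definition of $R$ the point $R(z)=T^{r(z)}(z)$ lies in $\mathcal D$, and by the lattice-invariance above we still have $R(z)\in\mathbb Z[\sqrt3]^2\cap\mathcal D$. Since $\mathcal D_1=\mathcal D\setminus\mathcal D_2$, the sets $\mathcal D_1$ and $\mathcal D_2$ are disjoint and cover $\mathcal D$, so exactly one of $R(z)\in\mathcal D_1$ or $R(z)\in\mathcal D_2$ holds.

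In the first case Proposition~\ref{propper71} gives $\pi(R(z))<\infty$, and in the second case Proposition~\ref{propper72} gives $\pi(R(z))<\infty$; in either case the orbit-invariance yields $\pi(z)=\pi(R(z))<\infty$. This exhausts all $z\in(\mathbb Z[\sqrt3]\cap[0,1))^2$, and therefore every integer sequence satisfying (\ref{e1app}) is periodic, which is Conjecture~\ref{cj} for $\lambda=\sqrt3$.

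The genuine difficulty of this theorem is already discharged in the two propositions and in Appendix~\ref{app7}: determining $\hat T$ and $\mathcal R$, the inducing substitutions $\sigma_1,\sigma_2$ and their return times, verifying the self-inducing identities required by Section~\ref{sectgeneral}, and checking that every point of $\mathcal R$ is periodic. The only point I would be careful about in the combination step is that the case split is genuinely exhaustive and unambiguous: every non-$\mathcal R$ point has a well-defined first hit $R(z)$ in $\mathcal D$, and the decomposition $\mathcal D=\mathcal D_1\sqcup\mathcal D_2$ routes it to exactly one proposition. The disjointness of the two trajectories noted at the start of the section is what guarantees that the two inducing systems can be analysed independently, so that Propositions~\ref{propper71} and~\ref{propper72} are each self-contained statements about full $T$-orbits that may be invoked directly here.
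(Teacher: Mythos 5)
Your proof is correct and takes essentially the same route as the paper, which obtains the theorem in one line by combining Propositions~\ref{propper71} and~\ref{propper72} with the fact (established in Appendix~\ref{app7}) that all points of $\mathcal R$ are periodic. The extra bookkeeping you supply --- invariance of $(\mathbb Z[\sqrt3]\cap[0,1))^2$ under $T^{\pm1}$, constancy of $\pi$ along orbits, and the exhaustive disjoint split $\mathcal D=\mathcal D_1\cup\mathcal D_2$ routing $R(z)$ to exactly one proposition --- merely makes explicit what the paper leaves implicit.
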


\noindent{\it Remark.}
The eigenvalues corresponding to the primitive part of $\sigma_2$ 
($\ell\in\{\mathbf 0,\mathbf 1,\mathbf 2\}$) are $5,-2$ and $1$.

\begin{figure}
\begin{minipage}{6.6cm}
\centerline{\includegraphics[scale=.8]{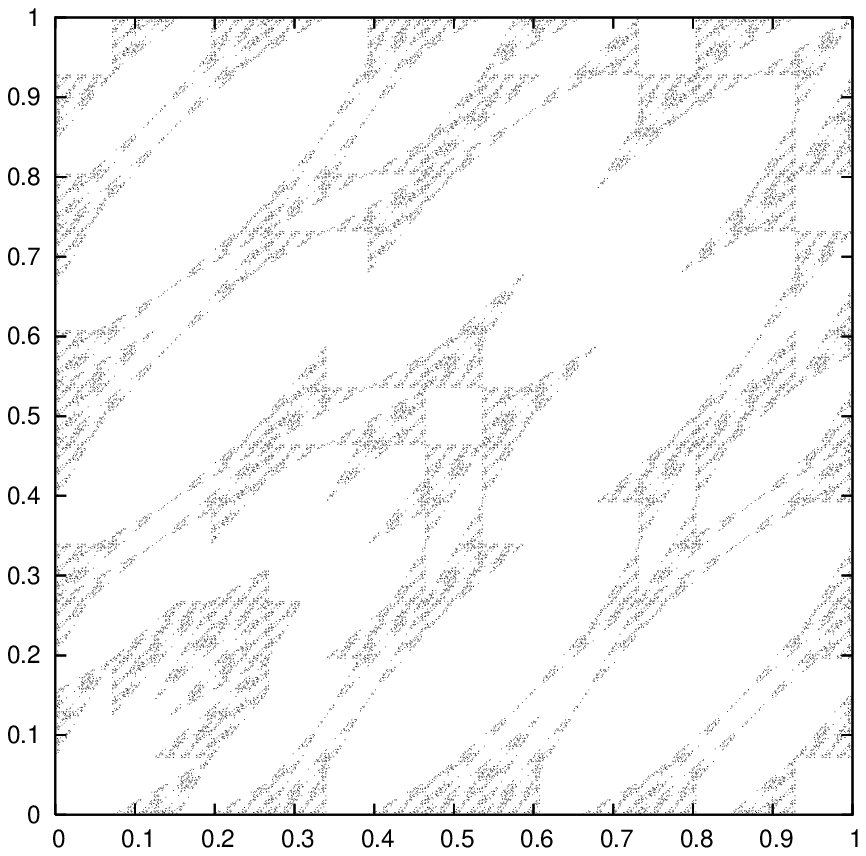}}
\caption{\mbox{Aperiodic points, $\lambda=\sqrt3$.}}\label{fig71a}
\end{minipage}
\begin{minipage}{8cm}
\centerline{\includegraphics[scale=.8]{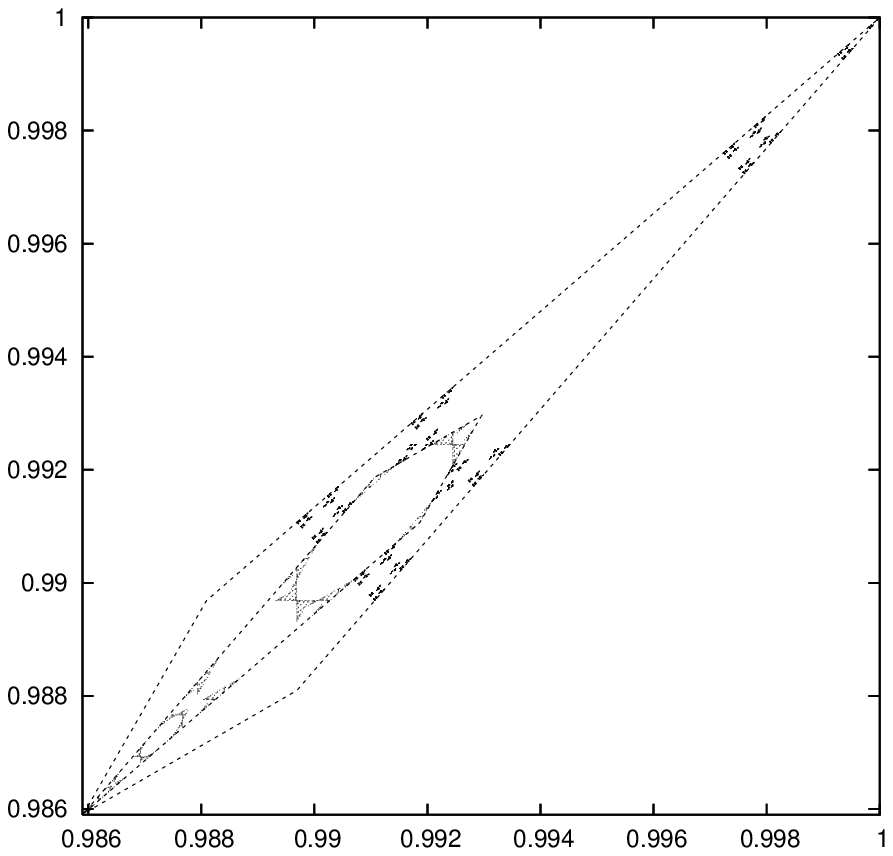}}
\caption{\mbox{Aperiodic points in $\mathcal D_1\cup\mathcal D_2$, 
$\lambda=\sqrt3$.}}\label{fig72a}
\end{minipage}
\end{figure}

\section{The case $\lambda=-\sqrt3=-2\cos\frac\pi6$} 

\begin{figure}
\includegraphics{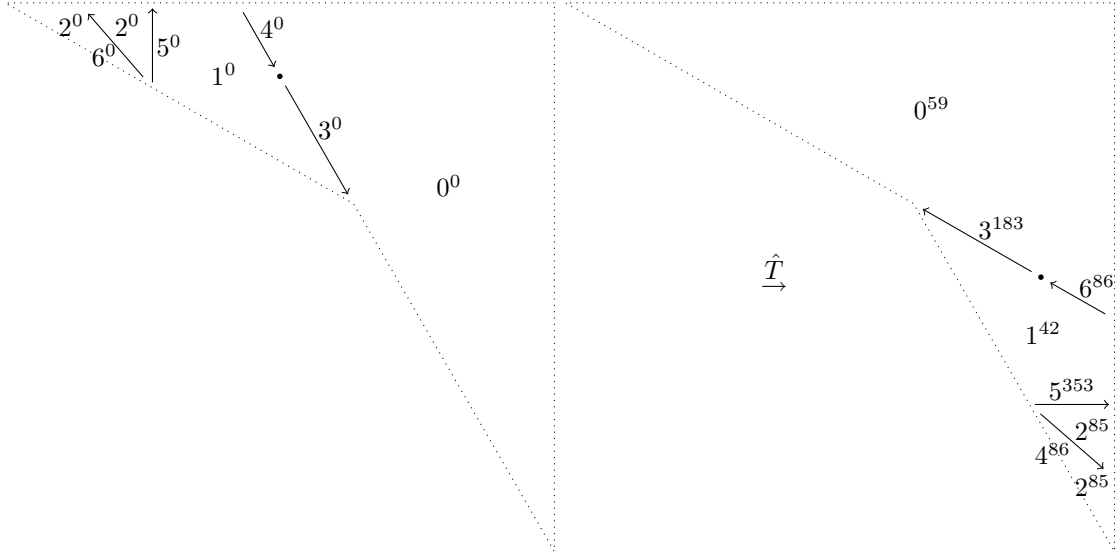}
\caption{The map $\hat T$ on $\mathcal D$, $\lambda=-\sqrt3$. 
($\ell^k$ stands for $T^k(D_\ell)$.)} \label{figThat8}
\end{figure}

Let $\mathcal D=\{(x,y)\in[0,1)^2:x+\sqrt3y>5\sqrt3-6\mbox{ or }
y+\sqrt3x>5\sqrt3-6\}$, $U_1$ as in Section~\ref{sectsqrt3} and 
$$
U(z)=U_1^2(z)=(2-\sqrt3)^2z+(4\sqrt3-6,4\sqrt3-6)=V^{-1}(\kappa V(z)),
$$
$\kappa=(2-\sqrt3)^2$, $V(z)=\big((1,1)-z\big)/\kappa$.
Then we have
$$
V(\mathcal D)=\{(x,y):\ x>0,\ y>0,\ x+\sqrt3y<1\mbox{ or }y+\sqrt3x<1\}.
$$
Figure~\ref{figThat8} shows the first return map $\hat T$ on $\mathcal D$, 
which is determined in Appendix~\ref{app8}.
The sets $D_0,\ldots,D_6$ satisfy the (in)equalities
\begin{gather*}
\begin{array}{c|c|c}
V(D_0) & V(D_1) & V(D_2) \\ \hline
\sqrt3x+y<1 & \sqrt3x+y>1,\,x<\sqrt3-1 & x>\sqrt3-1,\,2x+\sqrt3y\ne\sqrt3 \\
\end{array} \\
\begin{array}{c|c|c|c}
V(D_3) & V(D_4) & V(D_5) & V(D_6) \\ \hline
\sqrt3x+y=1,\,x<1/2 & \sqrt3x+y=1,\,x>1/2 & x=\sqrt3-1 & 2x+\sqrt3y=\sqrt3
\end{array} 
\end{gather*}
The remaining point $z=V^{-1}(1/2,1-\sqrt3/2)$ has return time $183$ and
satisfies $\hat T^{10}(z)=z$.

\begin{figure}
\includegraphics[scale=.99]{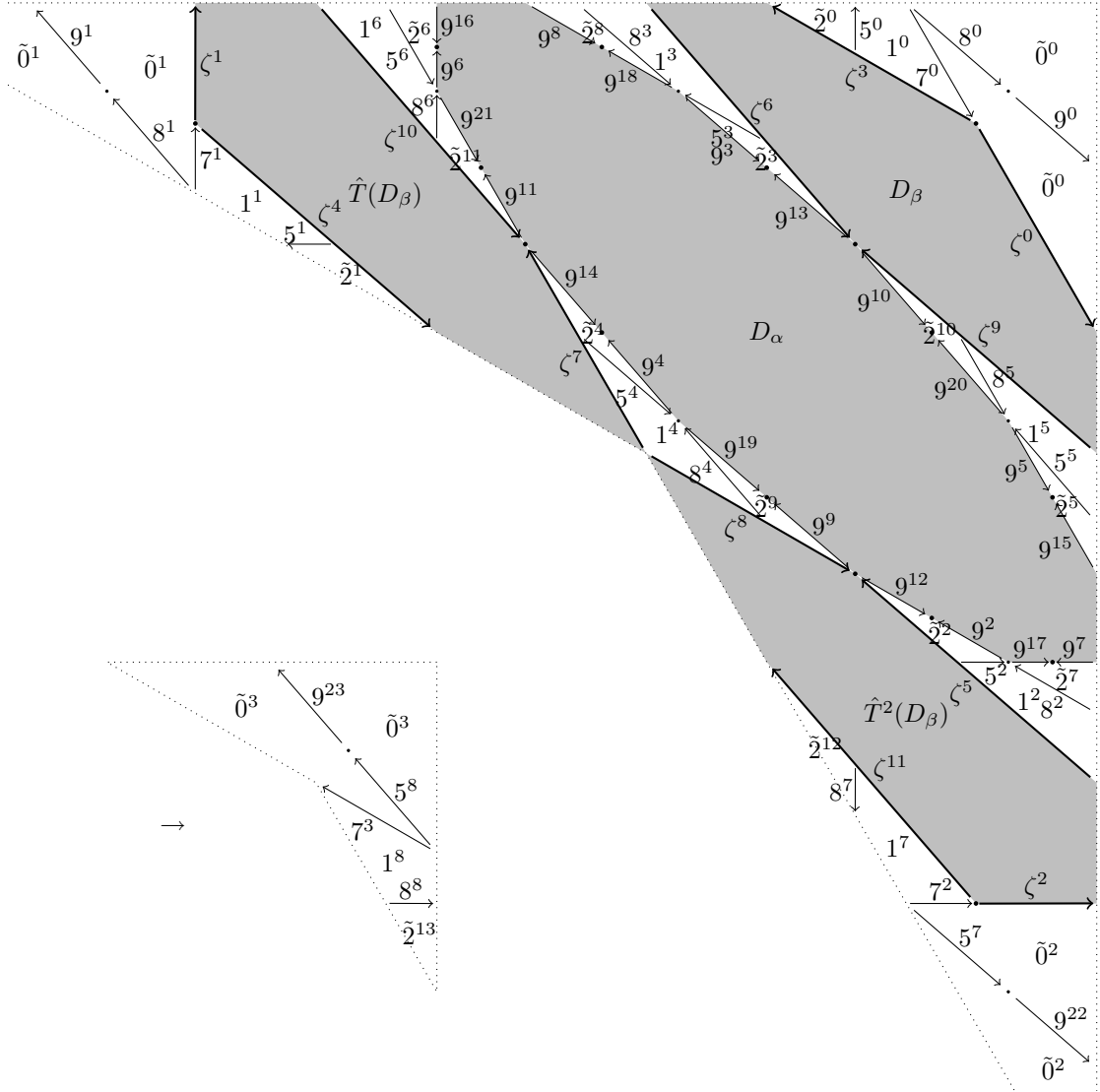}
\caption{Trajectory of $U_1(\mathcal D)$ and large parts of $\mathcal P$,
$\lambda=-\sqrt3$. ($\ell^k$ stands for $\hat T^kU_1(D_\ell)$.)} \label{figP81}
\end{figure}

\begin{figure}
\includegraphics[scale=.99]{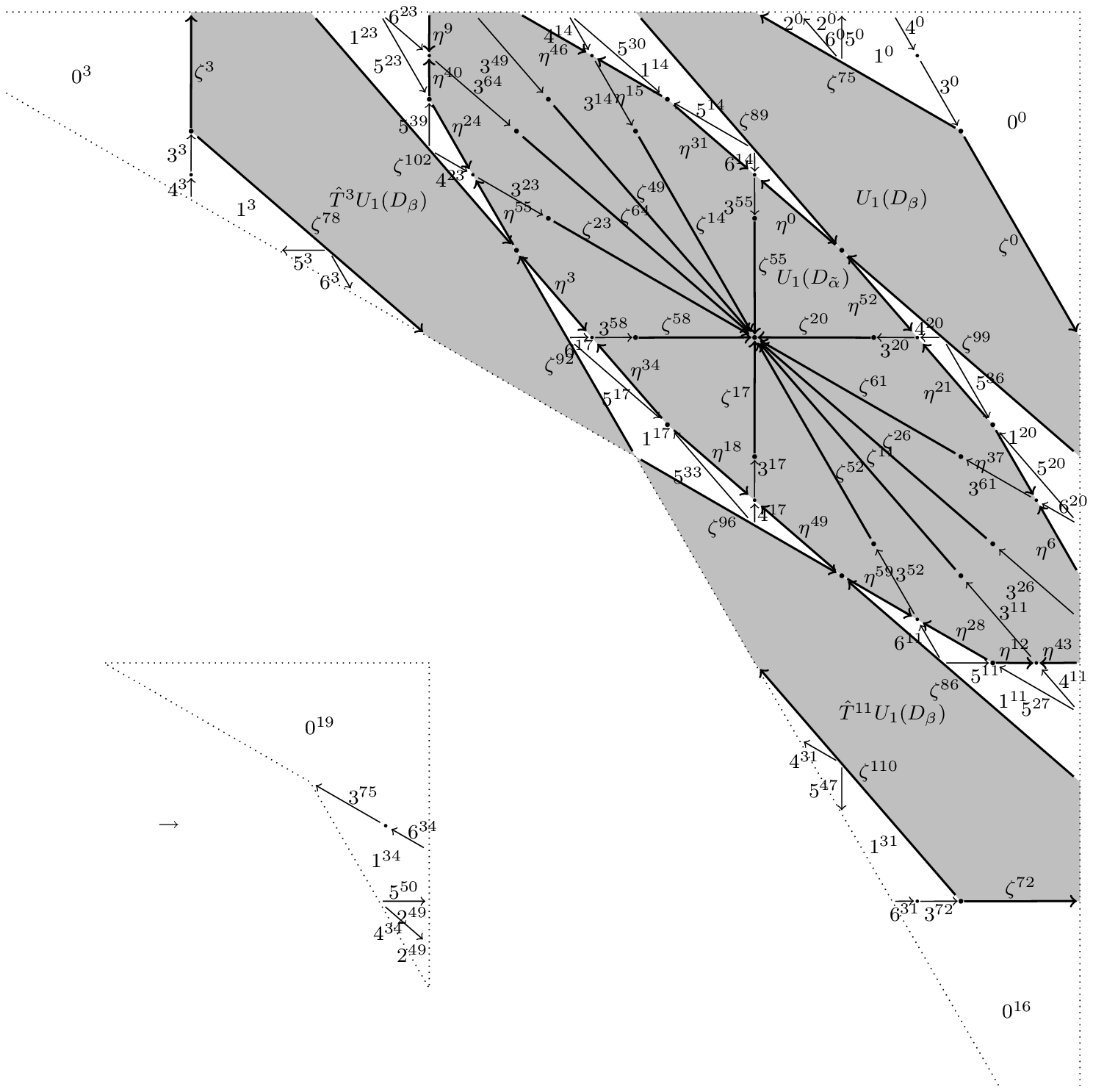}
\caption{Trajectory of $U(\mathcal D)$ and small parts of $\mathcal P$,
$\lambda=-\sqrt3$. ($\ell^k$ stands for $\hat T^kU(D_\ell)$.)} \label{figP82}
\end{figure}

Figure~\ref{figP81} shows that the first return map on $U_1(\mathcal D)$ 
differs from $U_1\hat TU_1^{-1}$ on several lines.
Therefore we add the lines $D_7,D_8,D_9$ satisfying the following 
(in)equalities
$$
\begin{array}{c|c|c}
V(D_7) & V(D_8) & V(D_9) \\ \hline
\sqrt3x+y=1 & \sqrt3x+2y=1,\,x>2-\sqrt3 & \sqrt3x+2y=1,\,x<2-\sqrt3
\end{array}
$$
and define $D_{\tilde0}=D_0\setminus V^{-1}(\{(x,y):\sqrt3x+2y=1\})$,
$D_{\tilde2}=D_2\cup D_6$.
For $z\in D_{\tilde\ell}$, $\ell\in\{0,2\}$ and $z\in D_\ell$, $\ell=1$, we 
have $\hat T^{|\sigma_1(\ell)|}U_1(z)=U_1\hat T(z)$ with
$$
\sigma_1: 
\begin{array}{rclrclrclrcl} 
0 & \mapsto & 020 & 1 & \mapsto & 010^410 & 2 & \mapsto & 010^910 & & & \\ 
5 & \mapsto & 010^440 & 7 & \mapsto & 050 & 8 & \mapsto & 060^410 & 9 & \mapsto 
& 060^930^940 
\end{array}
$$
Figure~\ref{figP82} shows that the substitution $\sigma$ given by 
$\sigma(\ell)=\sigma_1\sigma_2(\ell)$ with
$$
\sigma_2: 
\begin{array}{rclrclrclrcl} 
0 & \mapsto & 020 & 1 & \mapsto & 010^410 & 2 & \mapsto & 010^910 & & & \\ 
3 & \mapsto & 050^590^580 & 4 & \mapsto & 050^410 & 5 & \mapsto & 010^470^410 & 
6 & \mapsto & 010^480 
\end{array}
$$
satisfies the conditions in Section~\ref{sectgeneral} (with $\varepsilon=1$).
The coding of the return path of the remaining point is $\sigma_1(050^470^480)$.

\begin{theorem}
$\pi(z)$ is finite for all $z\in(\mathbb Z[\sqrt3]\cap[0,1))^2$, but
$\pi(V^{-1}(2/7,\sqrt3/7+1/7)=\infty$.
\end{theorem}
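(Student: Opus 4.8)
The plan is to follow the template established for the previous quadratic parameters, combining Proposition~\ref{propaper} (with $Q=1$) for the finiteness claim with an explicit $S$-orbit computation for the exhibited aperiodic point. Since Proposition~\ref{propaper} only governs $z\notin\mathcal R$ and the points of $\mathcal R$ are periodic (established in Appendix~\ref{app8}), it suffices to treat $z\notin\mathcal R$. First I would read off from Figures~\ref{figP81} and~\ref{figP82} and the return data of Appendix~\ref{app8} the first-return combinatorics needed to evaluate $S$: for each piece of $\mathcal D\setminus(U(\mathcal D)\cup\mathcal P)$ I would choose a return index $\hat s(z)$, record the corresponding $s(z)$ (reduced modulo $12$, since $A^{12}=A^0$ here), and compute the translation vector $t(z)=V(T^{s(z)}(z))-V(z)A^{s(z)}$. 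Because $U=U_1^2$ and $\sigma=\sigma_1\sigma_2$, the return paths traverse the doubled scaling, so this bookkeeping is heavier than in the single-step cases; nonetheless $t(z)$ takes only finitely many values, producing a finite orbit set $\{t(z)A^h:h\in\mathbb Z\}$.

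With these data I would compute $\delta=\max\{\|(t(z)A^h)'\|_\infty\}/(|\kappa'|-1)$, where $\kappa'=(2+\sqrt3)^2=7+4\sqrt3$, so that $|\kappa'|-1=6+4\sqrt3$. By Proposition~\ref{propaper}, if some point of $(\mathbb Z[\sqrt3]\cap[0,1))^2\setminus\mathcal R$ were aperiodic, there would exist an aperiodic $\tilde z\in\mathbb Z[\sqrt3]^2\cap\mathcal D$ with $\|V(\tilde z)'\|_\infty\le\delta$. Using the description $V(\mathcal D)=\{(x,y):x>0,\,y>0,\ x+\sqrt3y<1\text{ or }y+\sqrt3x<1\}$ together with the conjugate bound, the set of such candidate points is finite and can be listed explicitly. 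As for $\lambda=\pm\sqrt2$, I expect the bound to be tight enough that the surviving candidates either do not exist or all lie in $\mathcal P\cup\mathcal R$ (or have an $S$-orbit reaching $\mathcal P$); verifying that each candidate is periodic then establishes the first assertion via Theorem~\ref{thmperiodic}.

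For the displayed aperiodic point I would argue directly. Writing $V(z)=(2/7,(\sqrt3+1)/7)$ and using $VS(z)=\kappa'\big(V(z)A^{s(z)}+t(z)\big)$ (valid since $\kappa\kappa'=1$, hence $1/\kappa=\kappa'$), I would iterate $S$ on $z$, computing $VS^n(z)$ step by step. Since $\kappa'$, the entries of $A$, and each admissible $t(z)$ all lie in $\mathbb Z[\sqrt3]$, every $VS^n(z)$ stays in $\tfrac17\mathbb Z[\sqrt3]^2$ with denominator $7$, and the computation closes up into a cycle $VS^{p}(z)=V(z)$ for a small $p$. Checking at each step that $VS^n(z)\in V(\mathcal D)$ and lies outside the scaled copies making up $\mathcal P$ shows $S^nR(z)\in\mathcal D\setminus\mathcal P$ for all $n\ge0$, whence $\pi(z)=\infty$ by Theorem~\ref{thmperiodic}.

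The main obstacle is the first step. Because $U=U_1^2$ forces the composed substitution $\sigma=\sigma_1\sigma_2$, and the return times are large and non-constant (for instance the extra point with return time $183$), correctly identifying $\hat s(z)$, $s(z)$ and $t(z)$ on each piece of $\mathcal D\setminus(U(\mathcal D)\cup\mathcal P)$ — and thereby pinning down $\delta$ and the exact candidate list — is the delicate, computation-intensive part. Once $\delta$ and the finite candidate set are in hand, both the finiteness verification and the aperiodic-orbit computation are routine and, as the authors emphasize, checkable by hand.
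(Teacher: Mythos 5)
Your overall template is indeed the paper's: Proposition~\ref{propaper} with $Q=1$ reduces the first claim to a finite candidate check, and an explicit cycle of the scaled return map certifies the aperiodic point. But there is a concrete gap at exactly the spot you defer as ``the delicate, computation-intensive part'': you never address the one-dimensional pieces $D_3,\ldots,D_9$ and their images $U_1^n(D_\ell)$. The constant $\delta$ in Proposition~\ref{propaper} is a maximum over $z$ with $\pi(z)=\infty$ only, and the paper's first and essential step is to prove that all points on these lines are periodic, by an inductive shrinking argument: the only possibly aperiodic part of $D_5$ is $\hat TU_1(D_7)$, the only possibly aperiodic part of $U_1(D_7)$ is $\hat T^{23}U_1^2(D_5)$, and iterating, the putative aperiodic set of $D_5$ converges to $V^{-1}(\sqrt3-1,1-1/\sqrt3)\not\in D_5$, hence is empty; similar chains dispose of $D_3,D_4,D_6,D_7,D_8,D_9$ and of $U_1(D_3),\ldots,U_1(D_9)$. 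Without this step you cannot evaluate $\delta$ as defined, and your fallback --- maximizing $\|(t(z)A^h)'\|_\infty$ over \emph{all} pieces of $\mathcal D\setminus(U(\mathcal D)\cup\mathcal P)$ --- would force you to tabulate $t(z)$ along the long line return words such as $\sigma_2(3)=050^590^580$, where any enlargement of the constant is not harmless: already $\delta\ge2+\sqrt3$ would admit the integer candidate $(2-\sqrt3,2-\sqrt3)\in V(\mathcal D)$, so the clean conclusion that no candidates exist is tied to the sharp value. The same line analysis also feeds into verifying the hypothesis of Theorem~\ref{thmperiodic} that $\hat\pi$ is finite on $\mathcal P$, which your invocation of Proposition~\ref{propaper} silently presumes.

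The second divergence is that the line-periodicity step is precisely what lets the paper avoid your doubled-scaling bookkeeping altogether: once aperiodic points are confined to the trajectories of $U_1^n(D_{\tilde0}\cup D_1\cup D_2)$, the paper switches to the single scaling $U_1$ (defining $\hat s_1,s_1,t_1$ and the induced map $S_1$), obtains $\delta_1=(3\sqrt3+4)/(\sqrt3+1)=(5+\sqrt3)/2$, notes that the only $x\in\mathbb Z[\sqrt3]\cap(0,1)$ with $|x'|\le(5+\sqrt3)/2$ is $\sqrt3-1$, and concludes there are no integer candidates at all (no candidate-by-candidate verification is needed, contrary to your expectation); the aperiodic point $V^{-1}(2/7,\sqrt3/7+1/7)$ is then exhibited as a $4$-cycle of $S_1$, which your $S$ would see as a $2$-cycle. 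Your $U=U_1^2$ variant is workable in principle --- composing two $U_1$-steps gives $t=t_aA^{s_b}+\kappa_1 t_b$ with $\kappa_1=2-\sqrt3$, and the worst-case bound $(3\sqrt3+4)(3+\sqrt3)/(6+4\sqrt3)$ telescopes to the same $(5+\sqrt3)/2$ --- but it roughly doubles the tables while doing nothing to supply the missing argument for the lines, which is where the real work of this proof lies.
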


\begin{proof}
First we show that all points on the lines $U_1^n(D_\ell)$, 
$\ell\in\{3,\ldots,9\}$, $n\ge0$, are periodic.
The only possibly aperiodic part of $D_5$ is $\hat TU_1(D_7)$, and the only
possibly aperiodic part of $U_1(D_7)$ is $\hat T^{23}U_1^2(D_5)$.
Inductively, the set of aperiodic points in $D_5$ converges to 
$V^{-1}(\sqrt3-1,1-1/\sqrt3)\not\in D_5$ and is therefore empty.
Therefore, all points in $U^n(D_5)$ and $U^nU_1(D_7)$ are periodic.
Similar arguments show that all points in $U^n(D_3)$ in $U^nU_1(D_9)$ are 
periodic, then the same holds for $U^n(D_4)$ and $U^nU_1(D_5)$, for $U^n(D_6)$ 
and $U^nU_1(D_8)$, and finally for $U^n(D_8)$ and $U^nU_1(D_6)$.
Then it is clear that all points in $U^nU_1(D_3\cup D_4)$ and 
$U^n(D_7\cup D_9)$ are periodic as well.

Therefore we can limit our considerations to 
$U_1^n(D_{\tilde 0}\cup D_1\cup D_2)$, and consider the scaling map $U_1$ 
instead of $U$.
If we define $\hat s_1(z),s_1(z)$ and $t_1(z)$ accordingly, we obtain:
\begin{align*}
z\in\hat T^{-1}U_1(\mathcal D):\ & \hat s_1(z)=1,\,
s_1(z)\equiv 11\bmod 12,\, t_1(z)=V(\hat T(z))-V(z)A^{-1}=(1,0) \\
z\in\hat T^6U_1(D_1)\cup\hat T^{11}U_1(D_2):\ & \hat s_1(z)=2,\, 
s_1(z)\equiv 5\bmod 12,\, t_1(z)=(-1,\sqrt3-1) \\
z\in\hat T^5U_1(D_1)\cup\hat T^{10}U_1(D_2):\ & \hat s_1(z)=3,\, 
s_1(z)\equiv 4\bmod 12,\, t_1(z)=(\sqrt3-1,\sqrt3-2) \\
z\in\hat T^4U_1(D_1)\cup\hat T^9U_1(D_2):\ & \hat s_1(z)=4,\, 
s_1(z)\equiv 3\bmod 12,\, t_1(z)=(\sqrt3-1)(-\sqrt3,2) \\
z\in\hat T^8U_1(D_2):\ & \hat s_1(z)=5,\, s_1(z)\equiv 2\bmod 12,\, 
t_1(z)=(2-\sqrt3)(\sqrt3,-2) \\
z\in\hat T^7U_1(D_2):\ & \hat s_1(z)=6,\, s_1(z)\equiv 1\bmod 12,\, 
t_1(z)=(2\sqrt3-4,3\sqrt3-4) 
\end{align*}
For the remaining $z$, $\hat s_1(z),s_1(z)$ and $t_1(z)$ are given 
symmetrically.
By looking at the following sets $\{t_1(z)A^h:h\in\mathbb Z\}$, 
we obtain $\delta_1=(3\sqrt3+4)/(\sqrt3+1)=(5+\sqrt3)/2$:
\begin{gather*}
\pm\{(1,0),\,(0,1),\,(1,-\sqrt3),\,(-\sqrt3,2),\,(2,-\sqrt3),\,(-\sqrt3,1)\},\\
\pm\{(1,1-\sqrt3),\,(1-\sqrt3,2-\sqrt3),\,(2-\sqrt3,2-\sqrt3),\,
(2-\sqrt3,1-\sqrt3),\,(1-\sqrt3,1),\,(1,-1)\}, \\
\pm\{(2\sqrt3-4,3\sqrt3-4),\,(3\sqrt3-4,2\sqrt3-5),\,(2\sqrt3-5,2\sqrt3-2),\\
(2\sqrt3-2,-1),\,(-1,\sqrt3-2),\,(2-\sqrt3,4-2\sqrt3)\}.
\end{gather*}

The only $x\in\mathbb Z[\sqrt3]$ with $0<x<1$ and $|x'|\le(5+\sqrt3)/2$ is 
$\sqrt3-1$.
Therefore no point $z\in V(\mathbb Z[\sqrt3]^2\cap\mathcal D)$ satisfies
$\|z'\|_\infty\le\delta_1$, and Conjecture~\ref{cj} holds for $\lambda=-\sqrt3$.

If $V(z)=(2/7,\sqrt3/7+1/7)$, then we have
\begin{align*}
VS_1(z) & = (2+\sqrt3)\big(V(z)A^3+(\sqrt3-1)(-\sqrt3,2)=
(3\sqrt3/7-5/7,5\sqrt3/7-3/7), \\
VS_1^2(z)&=(2+\sqrt3)\big(VS_1(z)A^{11}+(1,0)=(\sqrt3/7+2/7,\sqrt3/7-1/7), \\
VS_1^3(z)&=(2+\sqrt3)\big(VS_1^2(z)A^5+(-1,\sqrt3-1)=(\sqrt3/7-1/7,3\sqrt3/7),
\end{align*}
$VS_1^4(z)=(2+\sqrt3)\big(VS_1^3(z)A^{11}+(1,0)=(2/7,\sqrt3/7+1/7)=V(z)$.
\end{proof}

\noindent{\it Remark.}
The eigenvalues corresponding to the primitive part of $\sigma_1$ 
($\ell\in\{0,1,2\}$) are $5,-2$ and $1$.

\begin{figure}
\begin{minipage}{6.6cm}
\centerline{\includegraphics[scale=.8]{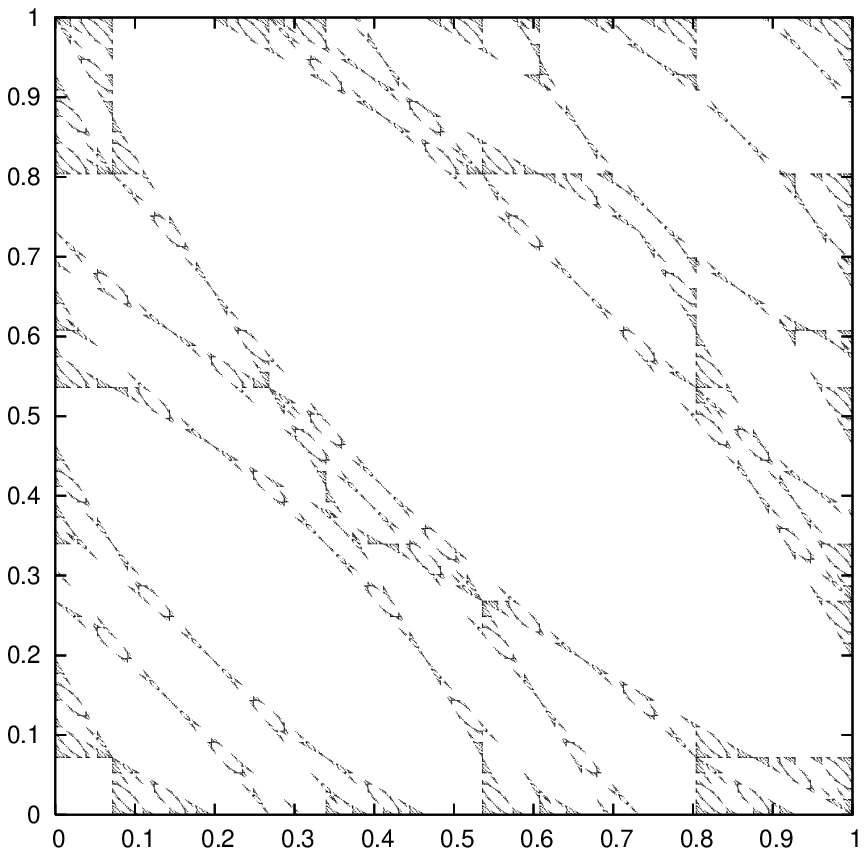}}
\caption{\mbox{Aperiodic points, $\lambda=-\sqrt3$.}}\label{fig81a}
\end{minipage}
\begin{minipage}{8cm}
\centerline{\includegraphics[scale=.8]{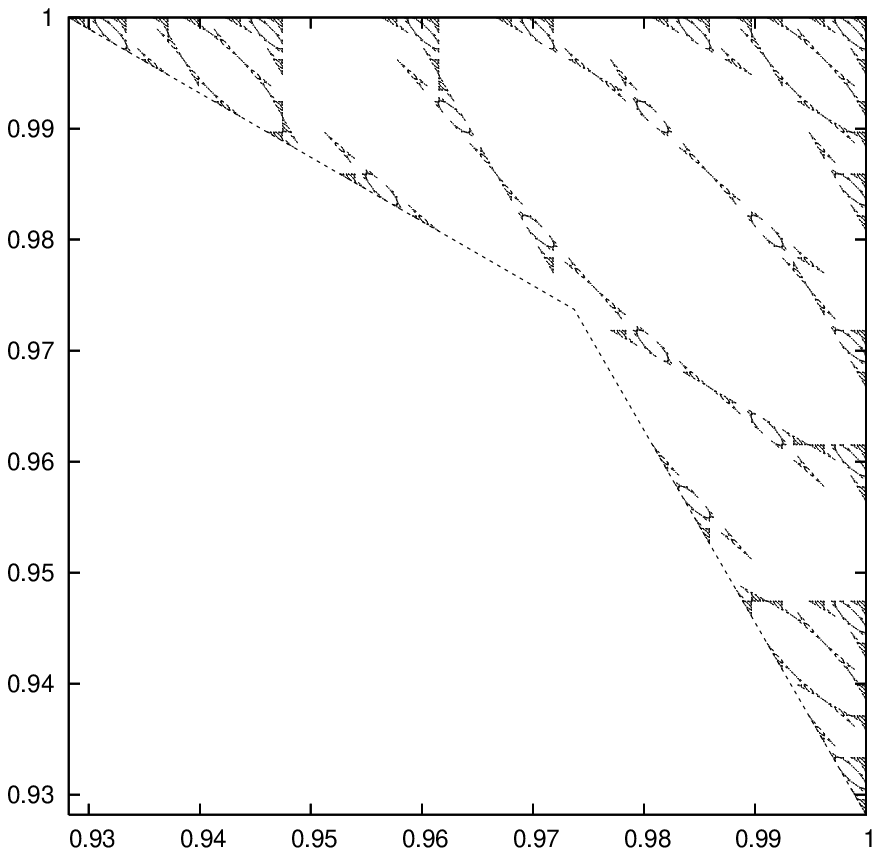}}
\caption{\mbox{Aperiodic points in $\mathcal D$, $\lambda=-\sqrt3$.}}
\label{fig82a}
\end{minipage}
\end{figure}

\section{The Thue-Morse sequence, the golden mean and $\sqrt3$}

We conclude by exhibiting a relation between the Thue-Morse sequence
and substitutions we used in golden mean cases (see~\cite{AS} for a
survey on links between fractal objects and automatic sequences).
The Thue-Morse sequence is a fixed point of the substitution
$0\mapsto01$, $1\mapsto10$:
$$
0\,1\,10\,1001\,10010110\,1001011001101001\,
10010110011010010110100110010110\cdots
$$
It can be written as
$$
0^11^20^11^10^21^20^21^10^11^20^11^10^21^10^11^20^21^20^11^10^21^20^2
1^10^11^20^21^20^11^10^21^10^11^20^11^10^21^20^21^10^11^20^1\cdots
$$
By subtracting $1$ from each term of the sequence of exponents (the
run-lengths of $0$'s and $1$'s) we obtain the sequence
$$
0\,10\,01110010\,01001110011100111001001001110010\cdots
$$
which is easily shown to be the fixed point of the substitution $0\mapsto010$, $1\mapsto01110$ (see~\cite{ASh}), which is equal to 
$\sigma$ in the cases
$\lambda=-1/\gamma$, $\lambda=1/\gamma$, $\lambda=-\gamma$, and to $\sigma_1$
in the case $\lambda=\sqrt3$.
In case $\lambda=\gamma$, we have that $\sigma^\infty(1)$ is the image of
this word by the morphism $0\mapsto10$, $1\mapsto110$ since
$\sigma(10)=(10)(110)(10)$ and $\sigma(110)=(10)(110)(110)(110)(10)$.

\bigskip\noindent
{\bf Acknowledgments}. We thank Professors Nikita Sidorov and
Franco Vivaldi for valuable hints and for drawing our attention to several 
references.
The second author wishes to express his heartfelt thanks to the members of the 
LIAFA for their hospitality in December 2006.
The third author was supported partially by the Hungarian National Foundation 
for Scientific Research Grant No.~T67580.
The fourth author was supported by the grant ANR-06-JCJC-0073 of the
French Agence Nationale de la Recherche.

\medskip\noindent
{\footnotesize
\sc Dep. of Mathematics, Faculty of Science Niigata University, Ikarashi 
2-8050, Niigata 950-2181, Japan \\
\tt akiyama@math.sc.niigata-u.ac.jp 

\smallskip\noindent
\sc Haus-Endt-Stra{\ss}e 88, D-40593 D\"usseldorf, Germany \\
\tt brunoth@web.de 

\smallskip\noindent
\sc Department of Computer Science, University of Debrecen, P.O. Box 12, H-4010 
Debrecen, Hungary \\
\tt pethoe@inf.unideb.hu 

\smallskip\noindent
\sc LIAFA, CNRS, Universit\'e Paris Diderot -- Paris 7, Case 7014, 75205 Paris Cedex 13, France \\
\tt steiner@liafa.jussieu.fr}

\newpage
\appendix
\section{The map $\hat T$ for $\lambda=\sqrt3$.}\label{app7}

\begin{figure}
\includegraphics[scale=.99]{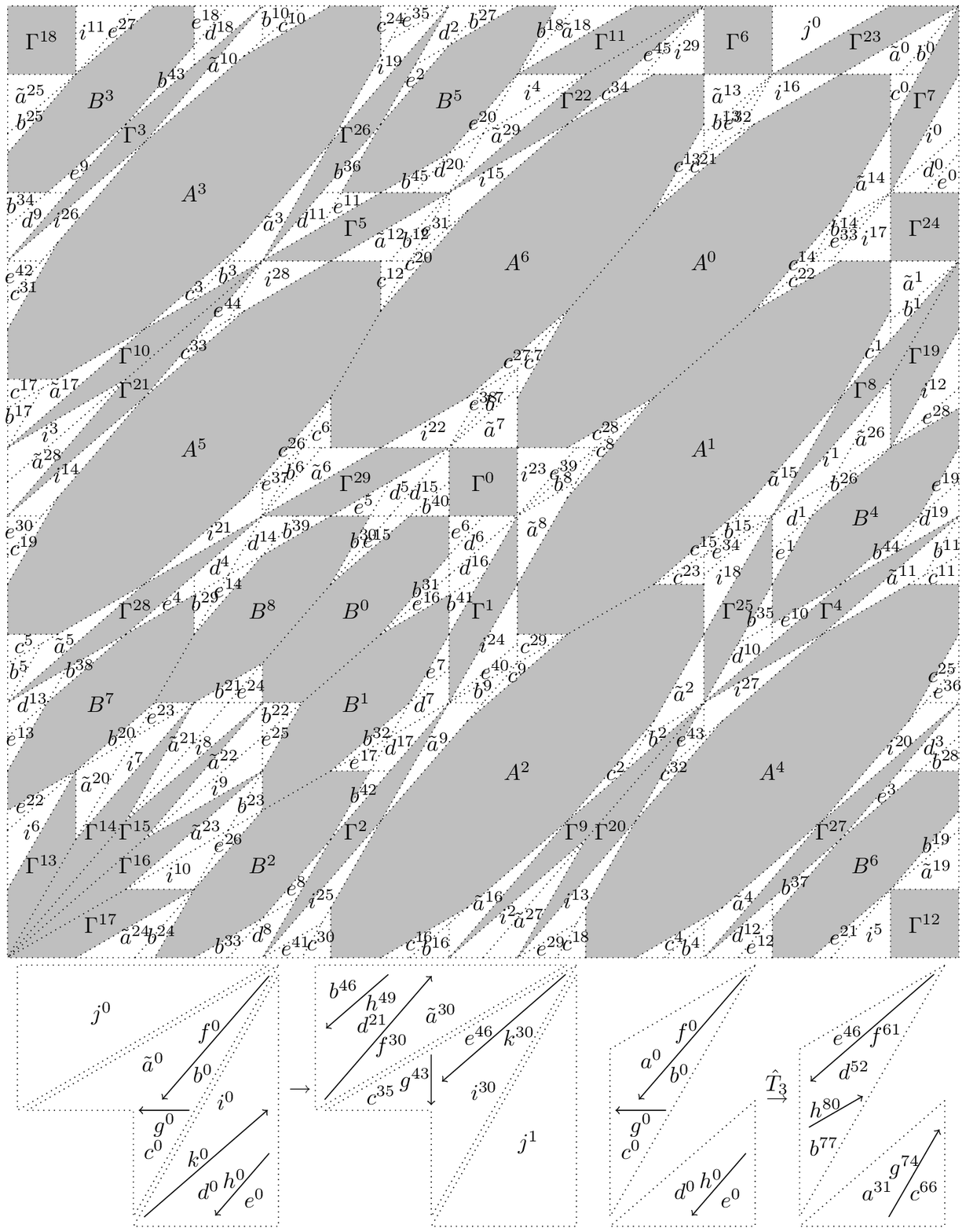}
\caption{The first return map $\hat T_3$ and large parts of $\mathcal R$,
$\lambda=\sqrt3$. 
($\ell^k$ stands for $T^k(D_\ell)$.)} \label{fig77}
\end{figure}

\begin{figure}
\includegraphics[scale=.95]{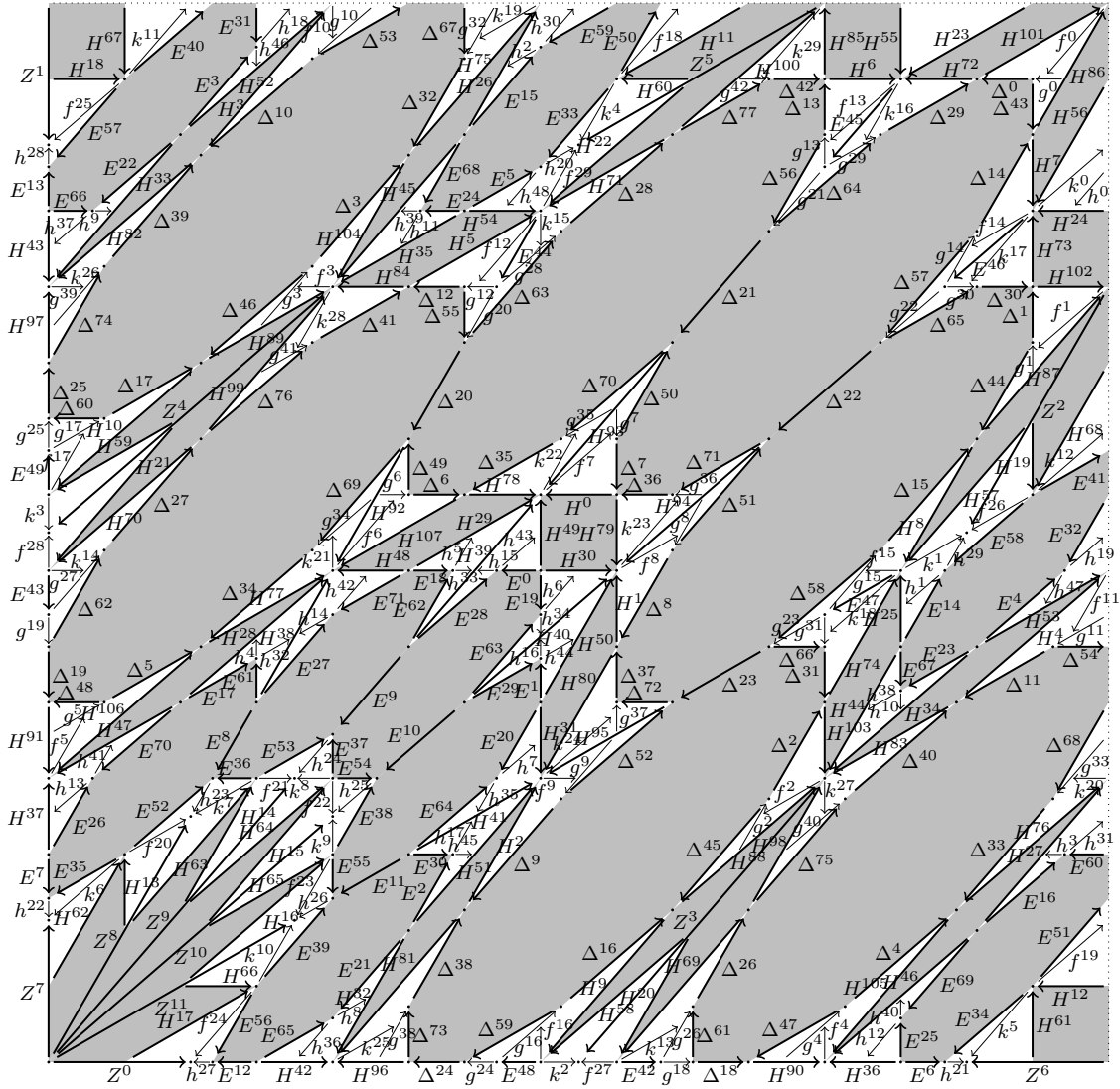}
\caption{Trajectories of long lines in $\mathcal R$ and $D_f,D_g,D_h,D_k$,
$\lambda=\sqrt3$. 
($\ell^k$ stands for $T^k(D_\ell)$.)} \label{fig78}
\end{figure}

\begin{figure}
\includegraphics[scale=.95]{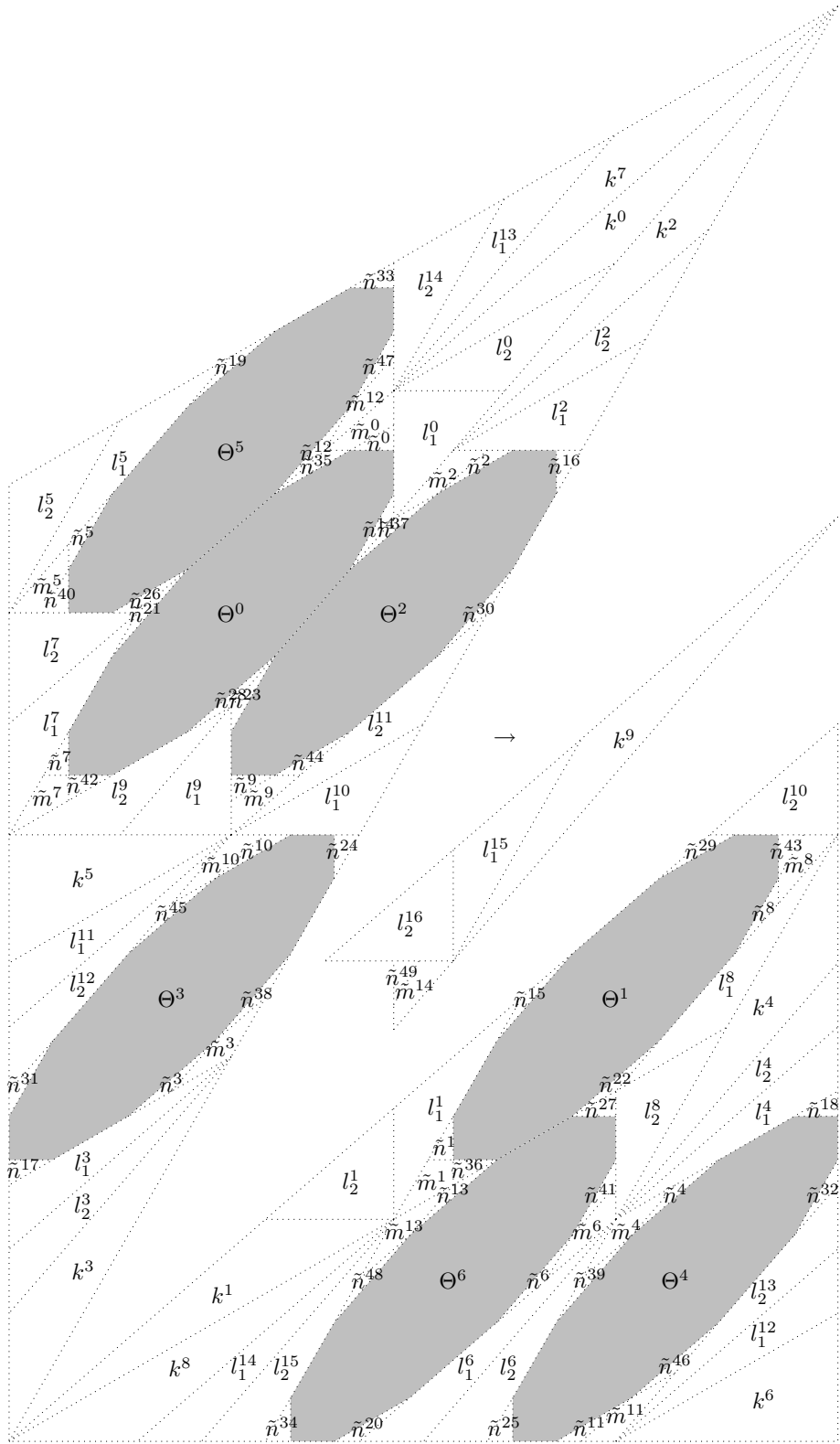}
\caption{An intermediate first return map, $\lambda=\sqrt3$. 
($\ell^k$ stands for $\hat T_3^k(D_\ell)$.)} \label{fig79}
\end{figure}

\begin{figure}
\includegraphics[scale=.95]{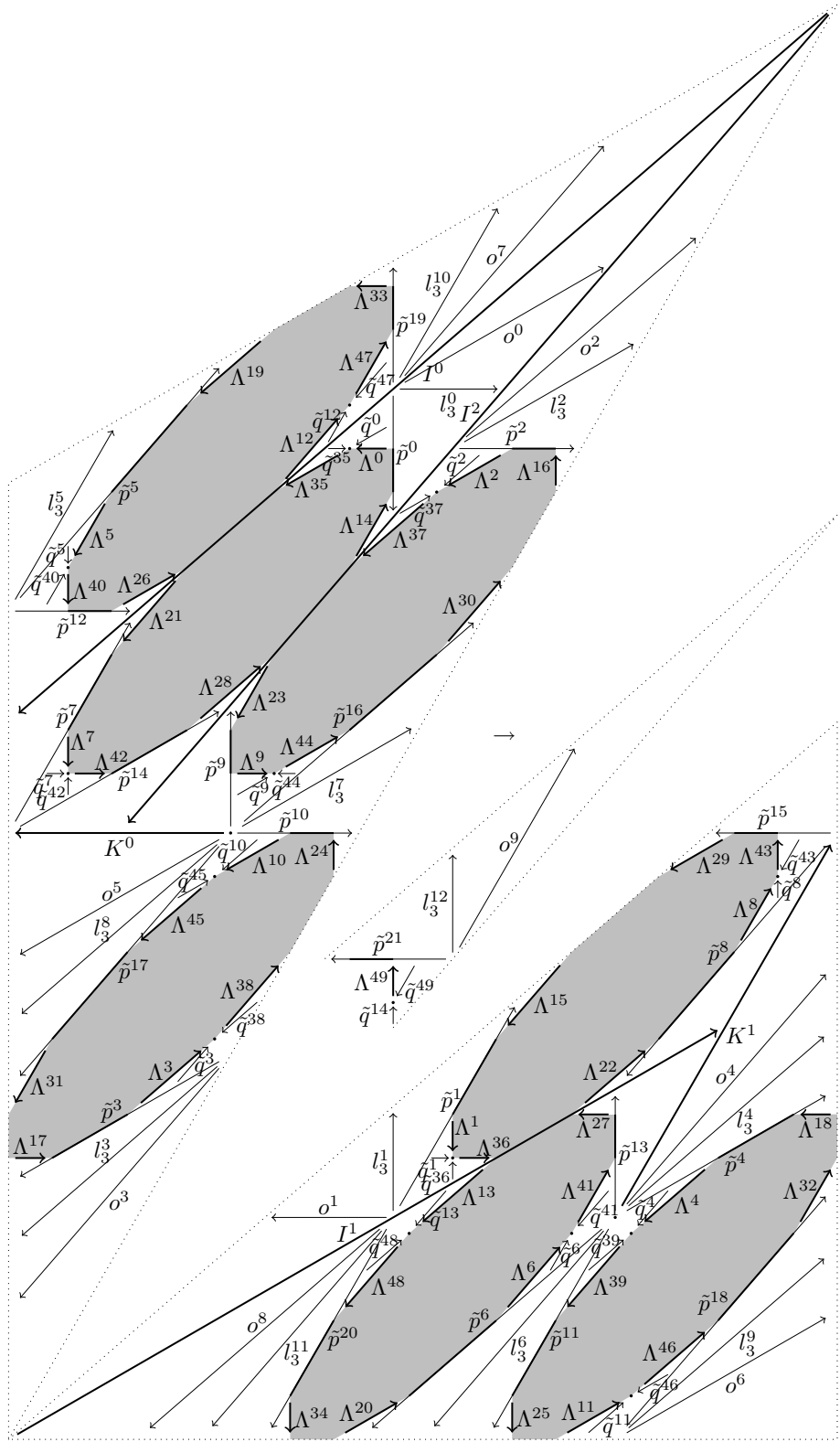} 
\caption{The trajectory of the lines, $\lambda=\sqrt3$. 
($\ell^k$ stands for $\hat T_3^k(D_\ell)$.)} \label{fig710}
\end{figure}

\begin{figure}
\includegraphics[scale=.9]{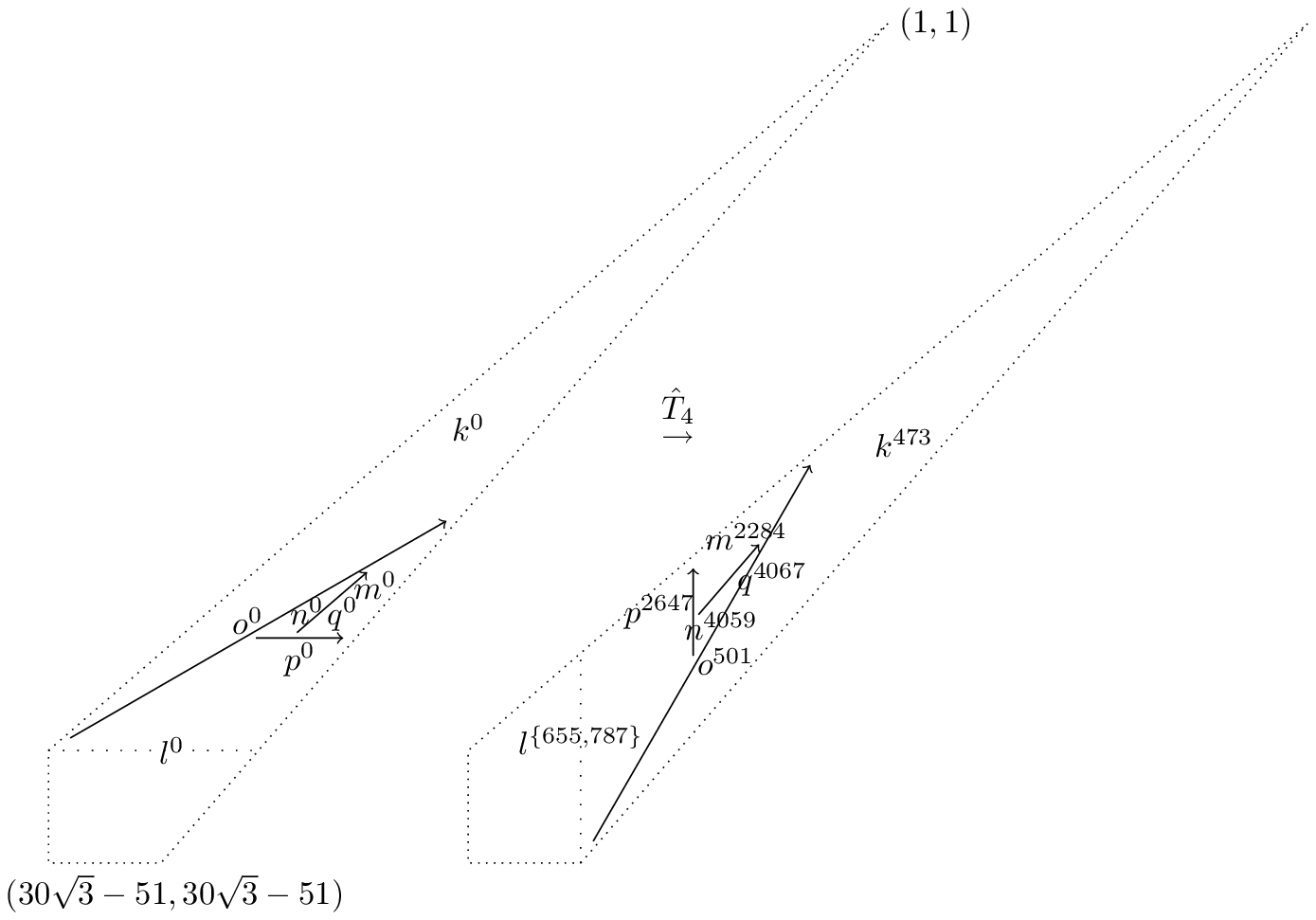}
\caption{The map $\hat T_4$, $\lambda=\sqrt3$. 
($\ell^k$ stands for $T^k(D_\ell)$.)} \label{fig711}
\end{figure}

\begin{figure}
\includegraphics{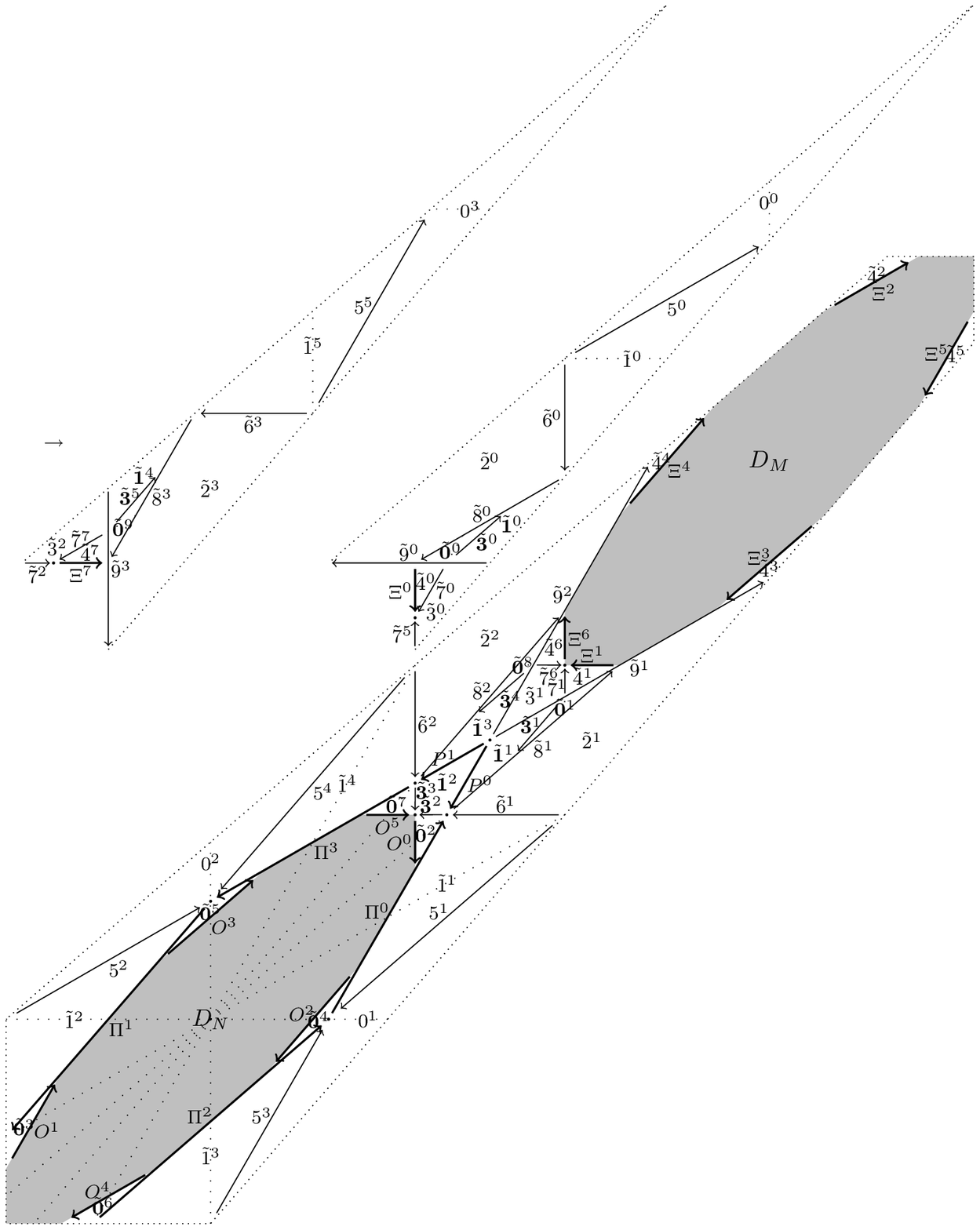}
\caption{Almost the map $\hat T$, $\lambda=\sqrt3$. 
($\ell^k$ stands for $\hat T_4^k(D_\ell)$.)} \label{fig712}
\end{figure}

As the scaling domain $\mathcal D$ is very small in case $\lambda=\sqrt3$, the
determination of $\hat T$ is done in several steps.
Figure~\ref{fig77} shows the action of $\hat T_3$, which is the first return 
map on the domain
$$
\{x>4\sqrt3-6,2y<\sqrt3x+2-\sqrt3,y>\sqrt3x+1-\sqrt3\}\cup
\{x<1,y>12\sqrt3x-20,y>\sqrt3x+6\sqrt3-11\},
$$
on sets $D_a,\ldots,D_h$.
To this end, we first determine the trajectory of sets 
$D_{\tilde a},D_b,\ldots,D_k$, which partition a symmetric version of this 
domain.
Figure~\ref{fig77} shows the trajectory of the open sets 
$D_{\tilde a},D_b,D_c,D_d,D_e,D_i,D_j$, Figure~\ref{fig78} completes the 
picture with the trajectories of the lines $D_f,D_g,D_h,D_k$. 
All points which are not on these trajectories are periodic.
>From the symmetric first return map, it is easy to determine $\hat T_3$.

Next, we consider the first return map on 
$$
\{(x,y):\,2y<\sqrt3x+2-\sqrt3,\,2x<\sqrt3y+2-\sqrt3,\,x\ge30\sqrt3-51\mbox{ or }
y\ge30\sqrt3-51\}
$$
in Figures~\ref{fig79} and~\ref{fig710}, partitioned into open sets 
$D_k,D_{l_1},D_{l_2},D_{\tilde m},D_{\tilde n}$ and lines 
$D_{l_3},D_o,D_{\tilde p},D_{\tilde q}$. 
>From this map, we easily obtain the first return map $\hat T_4$ on $\{(x,y):\,
2y<\sqrt3x+2-\sqrt3,\,2x<\sqrt3y+2-\sqrt3,\,x\ge30\sqrt3-51\mbox{ and }
y\ge30\sqrt3-51\}$, which is partitioned into the sets $D_l,\ldots,D_q$.
Observe that the return time on $D_l$ is not constant since the trajectories of 
the three parts $D_{l_1},D_{l_2},D_{l_3}$ are different. 
This implies that the return times on $D_0,D_1$ and $D_{\mathbf 0}$ are
not constant. 

Finally, we consider in Figure~\ref{fig712} the first return map on
$$
\{(x,y):\,2y<\sqrt3x+2-\sqrt3,\,2x<\sqrt3y+2-\sqrt3,\,x\ge72-41\sqrt3\mbox{ or }
y\ge72-41\sqrt3\},
$$
partitioned into sets $D_0,D_{\tilde 1},\ldots,D_{\tilde 9},
D_{\tilde{\mathbf 0}},D_{\tilde{\mathbf 1}},D_{\tilde{\mathbf 3}}$, from which 
it is easy to deduce $\hat T$ on $\mathcal D$.

\section{The map $\hat T$ for $\lambda=-\sqrt3$.}\label{app8}

\begin{figure}
\includegraphics[scale=.99]{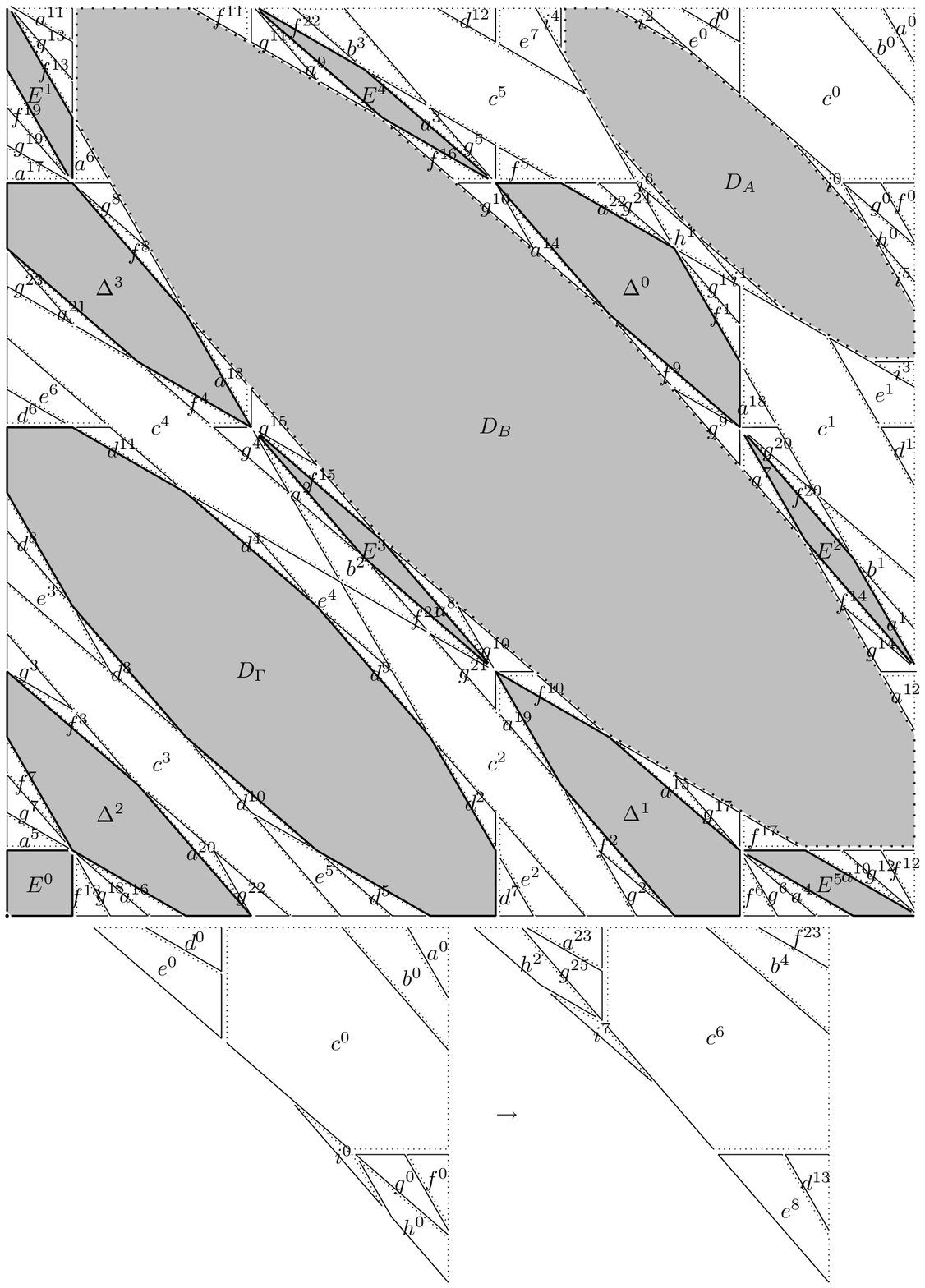}
\caption{A first return map and large parts of $\mathcal R$, $\lambda=-\sqrt3$. 
($\ell^k$ stands for $T^k(D_\ell)$.)} \label{fig84}
\end{figure}

\begin{figure}
\includegraphics[scale=.99]{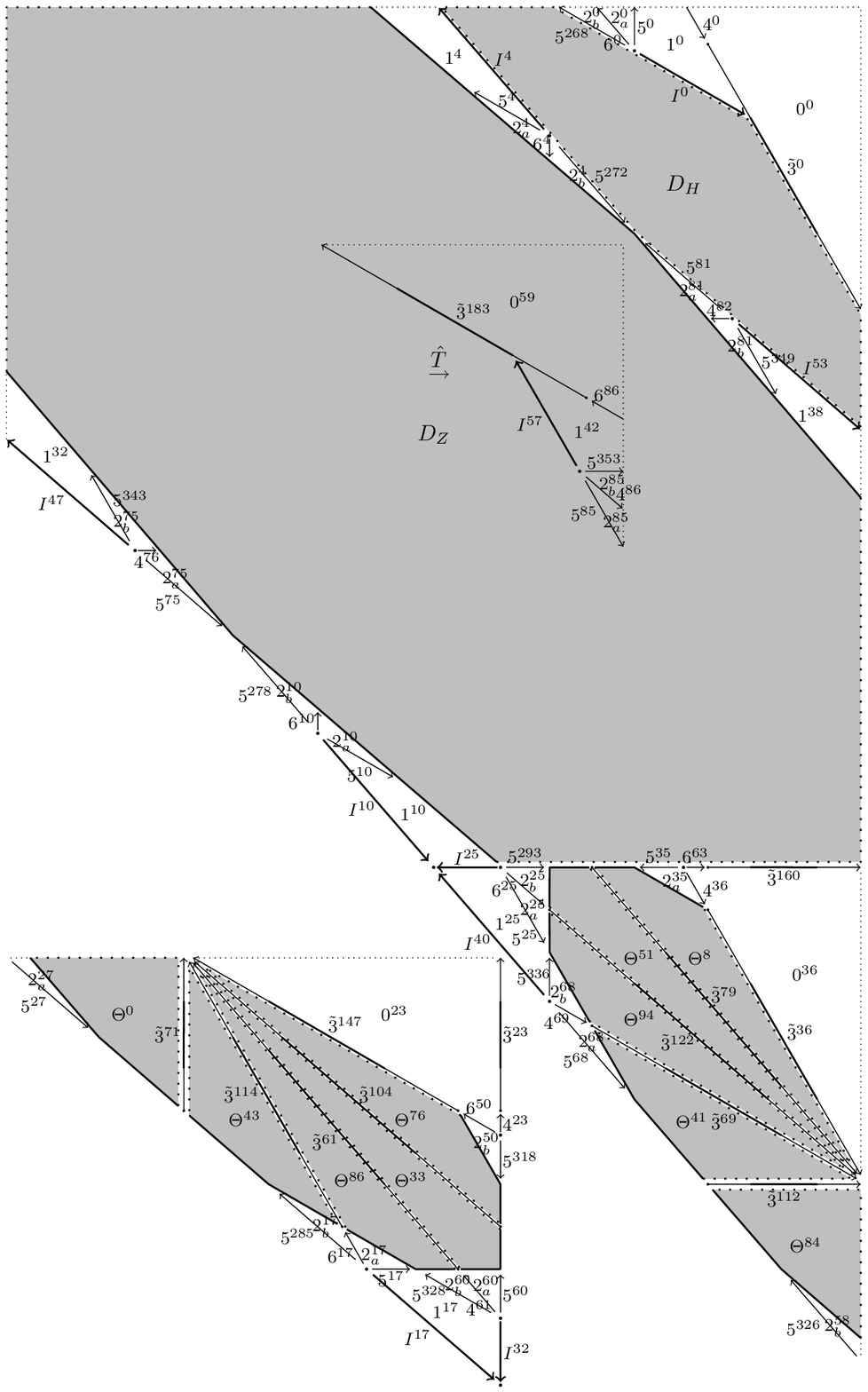}
\caption{The map $\hat T$ and small parts of $\mathcal R$, $\lambda=-\sqrt3$. 
($\ell^k$ stands for $T^k(D_\ell)$.)} \label{fig85}
\end{figure}

For $\lambda=-\sqrt3$, we consider in Figure~\ref{fig84} the first return map 
on
$$
\{(x,y)\in[0,1)^2:\,2x+\sqrt3y>3\sqrt3-2\mbox{ or }\sqrt3x+2y>3\sqrt3-2\},
$$
partitioned into sets $D_a,\ldots,D_i$.
Figure~\ref{fig85} provides the first return map $\hat T$ on $\mathcal D$.
Again, all points in $\mathcal R$ are periodic.

\end{document}